\documentclass[11pt]{amsart}
\usepackage[a4paper,margin=2.5cm]{geometry}

\usepackage{amsmath,amsthm,amssymb,amsfonts,amscd}
\usepackage{amsxtra}
\usepackage{eucal}
\usepackage{mathrsfs}
\usepackage{color}
\usepackage{xcolor}
\usepackage{hyperref,cleveref}
\usepackage{enumerate}
\usepackage{blkarray}
\usepackage{enumitem}
\usepackage{tcolorbox}
\usepackage{ifthen}
\usepackage{quiver} 

\usepackage{circuitikz}

\usepackage{biblatex}

\usepackage{tikz,tikz-cd}
\usetikzlibrary{3d}
\usepackage{tikz-3dplot}
\usetikzlibrary{positioning}
\usetikzlibrary{decorations.pathreplacing,decorations.markings,decorations.pathmorphing}

\usetikzlibrary{decorations.markings,arrows.meta}
\tikzset{
  midarrow/.style={
    postaction={decorate},
    decoration={markings, mark=at position .57 with {\arrow[draw=blue]{Stealth}}}
  },
}

\tikzstyle{v}=[circle, draw, solid, fill=black, inner sep=0pt, minimum width=3pt]
\tikzstyle{vo}=[circle, draw, solid, fill=white, inner sep=0pt, minimum width=3pt]

\theoremstyle{plain}
\newtheorem{thm}{Theorem}[section]
\newtheorem{lem}[thm]{Lemma}
\newtheorem{cor}[thm]{Corollary}
\newtheorem{prop}[thm]{Proposition}

\theoremstyle{definition}
\newtheorem{defn}[thm]{Definition}
\newtheorem{rmk}[thm]{Remark}
\newtheorem{ex}[thm]{Example}

\newcommand{\bR}{\mathbb{R}}

\newcommand{\bZ}{\mathbb{Z}}

\newcommand{\cA}{\mathcal{A}}
\newcommand{\cB}{\mathcal{B}}

\newcommand{\cG}{\mathcal{G}}

\newcommand{\cT}{\mathcal{T}}

\newcommand{\pP}{\mathsf{P}}
\newcommand{\pQ}{\mathsf{Q}}
\newcommand{\pR}{\mathsf{R}}
\newcommand{\pC}{\mathsf{C}}

\newcommand{\bfe}{\mathbf{e}}
\newcommand{\bfv}{\mathbf{v}}

\newcommand{\symdiff}{\triangle}

\newcommand{\im}{\text{im}}

\newcommand{\alt}{\mathrm{Alt}}

\usepackage{lineno}

\title{A polyhedral approach \linebreak to homotopy theorems in matroid theory}
\author{Changxin Ding}
\address{School of Mathematics, Georgia Institute of Technology, USA}
\email{dcx.math@outlook.com}

\author{Donggyu Kim}
\address{School of Mathematics, Georgia Institute of Technology, USA}
\email{donggyu.math@gmail.com}

\begin{document}

\begin{abstract}
We give a new proof of Maurer's homotopy theorem for matroids using polyhedral methods, in contrast to Maurer's original combinatorial proof. The same polyhedral approach yields a homotopy theorem for delta-matroids, from which the corresponding results for matroids, even delta-matroids, and antisymmetric matroids follow. We further prove an analogous theorem for integral polymatroids.
\end{abstract}

\maketitle

\section{Introduction}\label{sec: intro}

A \emph{matroid} admits several cryptomorphic axiomatizations. For example, it may be specified by its \emph{bases}, which satisfy the basis-exchange axiom, or by its \emph{circuits}, which satisfy the circuit elimination axiom. Baker and Bowler~\cite{BB} extended matroid theory to $F$-matroids over a \emph{tract} $F$. When $F$ is a field, the regular partial field, the Krasner hyperfield, the tropical hyperfield, or the sign hyperfield, $F$-matroids specialize to realizable matroids, regular matroids, matroids, valuated matroids, or oriented matroids, respectively. Baker and Bowler established several cryptomorphisms for $F$-matroids, including the equivalence between their basis and circuit axiomatizations. A key ingredient in their proof is Maurer's homotopy theorem~\cite{Maurer1973} for the \emph{basis graph} of a matroid. 

\begin{thm}[Maurer~\cite{Maurer1973}]\label{thm: Maurer homotopy theorem}
    In the basis graph of a matroid, any two walks with the same end-points are combinatorially homotopic.
\end{thm}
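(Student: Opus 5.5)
Combinatorial homotopy in the basis graph means that any closed walk can be reduced to the trivial walk by three operations: inserting or deleting a backtrack $B B' B$, inserting or deleting a triangle, and inserting or deleting a square or pentagon that arises in the basis graph. The plan is to realize the basis graph as the $1$-skeleton of the matroid base polytope $P_M=\mathrm{conv}\{\bfe_B : B \text{ a basis}\}$. The key fact is the Gelfand--Goresky--MacPherson--Serganova theorem: the edges of $P_M$ are exactly the pairs $\bfe_B,\bfe_{B'}$ with $|B\symdiff B'|=2$. Hence the basis graph \emph{equals} the graph of $P_M$.

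First I would invoke the standard topological fact that the $2$-skeleton of a convex polytope is simply connected. This holds because $\partial P$ (or $P$ itself) is contractible, and the $2$-skeleton of a CW complex carries the fundamental group. Concretely, every closed edge-walk on a polytope is a product of conjugates of boundaries of $2$-faces, modulo backtracking. Combinatorially: every closed walk in the graph of $P$ is homotopic to the trivial walk using only backtracks and the boundary cycles of $2$-dimensional faces. I would state this cleanly, citing cellular homotopy (or proving it directly by a generic linear functional argument: sweep by a generic height function and push walks down across $2$-faces toward the minimum vertex).

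The second step is to classify the $2$-faces of $P_M$. Every face of a matroid polytope is itself a matroid polytope (a face maximizing a linear functional corresponds to a matroid of the form $M_1\oplus\cdots$ via the flag structure). So a $2$-face is a $2$-dimensional matroid polytope. Its edges are all parallel to vectors $\bfe_i-\bfe_j$. A $2$-dimensional polygon whose edge directions are roots of type $A$ has at most $3$ pairs of parallel direction classes inside a plane. So it is a triangle, a square (product of two rank-one segments), or a hexagon? I must check this carefully. A hexagon in a $2$-plane spanned by roots of a rank-$2$ $A_2$ subsystem would be the polytope of $U_{2,4}$-type restricted to a plane. Actually $\mathrm{conv}$ of the permutations of $(1,1,0)$ in $\bR^3$ is a triangle, and a hexagon would require coordinates like $(1,0,\ldots)$ permutohedron-type, which is not $0/1$. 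With $0/1$ vertices, the $2$-dimensional matroid polytopes are triangles (from $U_{1,3}$, $U_{2,3}$), squares ($U_{1,2}\oplus U_{1,2}$), and the octahedron's faces, which are again triangles. I would verify this by enumerating the rank-$2$ root subsystems: $A_1\times A_1$ gives squares and $A_2$ gives triangles only, since hexagons are excluded by $0/1$ vertices. Then every $2$-face boundary is a triangle or a square of bases, and these are among the allowed elementary homotopies. Pentagons are not even needed.

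The main obstacle is making the first step rigorous and combinatorial, i.e.\ passing from simple connectivity of the polytope to the precise list of moves. I would do this through a careful Morse-type induction on a generic linear functional $\ell$. Take a closed walk and look at its vertex $v$ of maximal $\ell$-value, with incident walk edges $u\,v\,w$. Among the $2$-faces containing $v$ and lying below $v$, one can connect the edges $vu$ and $vw$ through a chain of $2$-faces around $v$ in the vertex figure, which is a connected polytope. Replacing $u\,v\,w$ by the lower part of those face boundaries strictly decreases the multiset of high values. This gives termination and completes the argument.
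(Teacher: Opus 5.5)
Your proposal is correct, and its overall plan is the paper's: identify the basis graph with the graph of $\pP_M$, show that every $2$-face is a triangle or a square, and use simple connectivity of the polytope. The genuine difference is in how you turn simple connectivity into Maurer's moves.

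The paper does this once, for all graphs, in Section~2. It proves that Maurer's property is equivalent to $\pi_1(X^{34}_G)=1$. It then obtains $\pi_1(X^{34}_G)=1$ from $\pi_1(\pP^2)\cong\pi_1(\pP)=1$ together with the surjection $\pi_1(\pP^2)\twoheadrightarrow\pi_1(X^{34}_G)$. Your generic-height sweep instead produces the deletions, shortcuts and $2$-switches explicitly and needs no algebraic topology, so it is more elementary. The paper's route, in turn, transfers verbatim to polyhedral complexes with simply connected support (Proposition~\ref{prop: main2}), which the polymatroid case requires; a single linear sweep is not obviously enough there.

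Your sweep does work, but a few points need tightening:
\begin{itemize}
\item State the right lemma for the lower link. The downward edges at $v$ are linked through $2$-faces not merely because the vertex figure is connected. They are the vertices of the vertex figure lying in an open half-space, and such vertices induce a connected subgraph, since every non-minimizing vertex has a strictly decreasing neighbour.
\item Each $2$-face used in the chain then has $v$ as its top vertex. Hence the other arc of its boundary lies strictly below $\ell(v)$, which is what makes the multiset of heights decrease.
\item Base your closed walks at the $\ell$-minimum, so the maximum is never the base point. Reduce walks with common end-points to closed walks via $W_1\sim W_1W_2^{-1}W_2$.
\item In the $2$-face classification, ``hexagons are excluded by 0/1 vertices'' is not enough in the $A_2$ case, because quadrilaterals and pentagons must also be ruled out. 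The clean reason is that all vertices of such a face agree off $\{i,j,k\}$ and have constant coordinate sum on $\{i,j,k\}$, so there are at most three of them. (The paper instead follows consecutive edges around the polygon.)
\item Maurer's moves are deletions, shortcuts and $2$-switches, with no pentagons. This slip is harmless, since your argument only uses backtracks, triangles and squares.
\end{itemize}
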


Analogous Baker--Bowler theories have been developed for \emph{orthogonal matroids} and \emph{antisymmetric matroids}. Orthogonal matroids are equivalent to \emph{even delta-matroids}, which generalize matroids and may be viewed as Coxeter matroids of type~D. Jin and the second author~\cite{JK} established cryptomorphisms for orthogonal $F$-matroids using Wenzel's homotopy theorem~\cite{Wenzel1996}, which extends Maurer's theorem from matroids to even delta-matroids. Antisymmetric matroids also generalize matroids and are closely related to Coxeter matroids of type~C. In~\cite{Kim}, the second author introduced antisymmetric matroids, proved a homotopy theorem for them,\footnote{In \cite{Wenzel1996} and \cite{Kim}, the concepts of homology and homotopy are conflated. See Section~\ref{sec: homology} for a detailed discussion.} and applied it to establish the corresponding cryptomorphisms. This homotopy theorem, in turn, generalizes Wenzel's theorem.

Our main contribution is a proof of Theorem~\ref{thm: Maurer homotopy theorem} based on the geometric characterization of matroids provided by their \emph{basis polytopes}. This proof is more geometric and conceptual than the original combinatorial argument. The same approach yields a homotopy theorem for delta-matroids that encompasses all the results discussed above.
\begin{thm}\label{thm: homotopy theorem for delta matroids}
    In the basis graph of a delta-matroid (see Definition~\ref{def: basis graph of delta-matroid}), any two walks with the same end-points are combinatorially homotopic.
\end{thm}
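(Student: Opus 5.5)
The plan is to view the basis graph of a delta-matroid $D$ on ground set $E$ as the $1$-skeleton of its polytope $P(D)=\mathrm{conv}\{\bfe_B : B \in \mathcal{F}\}\subset \bR^E$, and to use the standard fact that the $2$-skeleton of a polytope is simply connected. We first check the facts we need about $P(D)$. All its vertices are $0/1$ vectors, so every $\bfe_B$ is a vertex. By the Gelfand--Serganova type characterization of delta-matroids, every edge of $P(D)$ is parallel to $\bfe_i$ or to $\bfe_i\pm\bfe_j$. Consequently the graph of $P(D)$ is a spanning subgraph of the basis graph of Definition~\ref{def: basis graph of delta-matroid}; its edges correspond to single or double symmetric-difference moves.

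Next we classify the $2$-faces. Every face of $P(D)$ is itself the polytope of a delta-matroid: the face maximizing a linear functional $w$ is $P$ of the set of feasible sets that are $w$-maximal. A $2$-dimensional delta-matroid polytope with edge directions in $\{\bfe_i,\bfe_i\pm\bfe_j\}$ is a triangle, a square (parallelogram) or a pentagon/hexagon of bounded size; we would enumerate these. The combinatorial homotopy relation is generated by the elementary cycles of length at most some small bound in the basis graph (triangles, squares, pentagons/hexagons as the paper's definition presumably specifies). So the boundary of each $2$-face is an elementary cycle, and hence null-homotopic.

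Now take a closed walk $W$ in the basis graph, or two walks with the same endpoints, which reduces to this case. First replace each edge of $W$ that is not an edge of $P(D)$ by a path in the $1$-skeleton. Such an edge joins $B$ to $B'$ with $|B\symdiff B'|\le 2$, so the segment $[\bfe_B,\bfe_{B'}]$ lies in the face $F$ of $P(D)$ minimizing the functional $w$ that vanishes on $B\symdiff B'$ and is $\pm$ elsewhere to pin the other coordinates. This face is a delta-matroid polytope of dimension at most $2$ on two free coordinates. Within it the edge together with a boundary path forms an elementary short cycle, so the replacement is a homotopy. Then $W$ becomes a closed edge-path in the $1$-skeleton of $P(D)$. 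Since the $2$-skeleton of a convex polytope is simply connected (cellular approximation on the contractible polytope, or directly by projecting along a generic direction and sweeping), $W$ is combinatorially a product of conjugates of $2$-face boundaries together with backtracks. Each of these boundaries is null-homotopic by the previous paragraph.

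The main obstacle is the $2$-face classification and matching it exactly with the generators allowed in the definition of combinatorial homotopy. In particular, one must check that every polygon with edges in directions $\bfe_i,\bfe_i\pm\bfe_j$ that arises as a delta-matroid face is covered, and that non-polytope edges (e.g.\ $\bfe_i+\bfe_j$ diagonals inside a square face) split faces into allowed short cycles. I would do this by restricting to the at most $4$ coordinates involved in a $2$-face, reducing to a finite check of delta-matroids on $\le 4$ elements. The translation from topological simple connectivity to the combinatorial statement is standard once faces are known to be generated.
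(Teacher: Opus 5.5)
Your overall architecture matches the paper's. The graph of $\pP_M$ is a spanning subgraph of $G_M$ (it is exactly the reduced basis graph of Lemma~\ref{lem: 1-skeleton delta}). Simple connectivity of the $2$-skeleton reduces closed walks in the $1$-skeleton to conjugates of $2$-face boundaries. Each remaining edge of $G_M$ is a diagonal of a square $2$-face and is removed by one shortcut; your functional-$w$ argument for this is essentially Lemma~\ref{lem: 1-skeleton delta} combined with Lemma~\ref{lem: chord}.

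The gap is at the one step that carries the real content. Combinatorial homotopy is not generated by ``short'' cycles of some unspecified length. Deletions, shortcuts and $2$-switches only ever change a walk by a $3$-cycle or a $4$-cycle, which is why the relevant space is $X^{34}_G$. A $5$- or $6$-cycle need not be null-homotopic: in the graph $C_5$, the closed walk around the cycle is not, since $X^{34}_{C_5}=C_5$ has fundamental group $\bZ$. So your argument works only if every $2$-face of $\pP_M$ is a triangle or a quadrilateral. If a pentagonal or hexagonal face existed, as you allow, $X^{34}_G$ would no longer be obtained from the $2$-skeleton by attaching $2$-cells, and the proof would collapse. You flag the face classification as the main obstacle, but you neither carry it out nor recognize that its required outcome is ``only triangles and quadrilaterals''.

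The missing ingredient is Lemma~\ref{lem: 2-face delta}, which the paper proves by a case analysis on consecutive edge directions. It also follows from a short general fact: an affine plane in $\bR^n$ contains at most four points of $\{0,1\}^n$.
\begin{itemize}
\item Write the plane as $\{c+au+bw : a,b\in\bR\}$ with $u,w$ linearly independent.
\item The $n\times 2$ matrix with columns $u,w$ then has two linearly independent rows $k,l$.
\item A $0/1$ point of the plane must satisfy $c_k+au_k+bw_k\in\{0,1\}$ and $c_l+au_l+bw_l\in\{0,1\}$, which leaves at most four choices of $(a,b)$.
\end{itemize}
Hence every $2$-face of any $0/1$-polytope, in particular of $\pP_M$, is a triangle or a quadrilateral. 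Its boundary is a $3$- or $4$-cycle of $G_M$, which is null-homotopic by Lemma~\ref{lem: 34cyclesaretrivial}. With this inserted, your proof coincides with the paper's.
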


Another natural generalization is given by \emph{integral polymatroids}, whose bases are integer vectors rather than sets. The notion of an $F$-polymatroid was introduced in~\cite{BHKKL2025} in terms of extended Pl\"ucker vectors and was subsequently used in~\cite{BHKL2025b} to study the space of Lorentzian polynomials. We also establish a homotopy theorem for integral polymatroids, which is a key ingredient in the proof of a cryptomorphism for $F$-polymatroids in~\cite{KP2026}.
\begin{thm}\label{thm: polymatroid homotopy theorem}
In the basis graph of an integral polymatroid, any two walks with the same end-points are combinatorially homotopic.
\end{thm}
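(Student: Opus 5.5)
The plan is to treat the integral polymatroid exactly as we treat matroids and delta-matroids: realize its basis graph as the $1$-skeleton of a polytope, and show that combinatorial homotopy is generated by the $2$-faces. Let $P\subseteq\bR^E$ be the base polytope of the integral polymatroid, i.e.\ the convex hull of its bases (integer points $x$ with $x(E)=r(E)$ and $x(A)\le r(A)$). Two bases $x,y$ are adjacent in the basis graph when $y=x+\bfe_i-\bfe_j$ for some $i\ne j$.

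\textbf{Step 1: a subdivision with the basis graph as $1$-skeleton.} $P$ is a generalized permutohedron whose edges are parallel to vectors $\bfe_i-\bfe_j$. Because $r$ is integer-valued, $P$ is an integral polytope. Its edges may have lattice length greater than one, and $P$ itself may contain interior lattice points. So the $1$-skeleton of $P$ is not the basis graph. Instead, I would use the regular subdivision of $P$ by the hyperplanes $x_i-x_j\in\bZ$ (equivalently, intersect $P$ with the alcoved/type-$A$ unit cells $\{x: \lfloor\cdot\rfloor \text{ fixed}\}$ cut by all hyperplanes $x(A)\in\bZ$). Each cell is then a basis polytope of a (translated) matroid, by the standard fact that an integral polymatroid intersected with a unit lattice cube translate is a translated matroid polytope. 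The vertices of this polyhedral complex are exactly the integer points of $P$, i.e.\ the bases. Its edges are exactly the unit segments $[x,x+\bfe_i-\bfe_j]$, i.e.\ the edges of the basis graph.

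\textbf{Step 2: reduce to $2$-cells.} The complex is a polyhedral subdivision of a convex polytope, hence contractible, so its $2$-skeleton is simply connected. Any closed walk in its $1$-skeleton is therefore null-homotopic in the $2$-skeleton, and the combinatorial version of this statement says the closed walk is a product of conjugates of boundaries of $2$-cells. I would cite or reprove the standard lemma that, for a polytopal complex with simply connected underlying space, the boundaries of $2$-faces generate all closed walks up to backtracking.

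\textbf{Step 3: identify $2$-cells with the elementary homotopies.} Every $2$-face of a cell is a $2$-face of a matroid basis polytope, so it is a triangle, a square, or an octahedron-type face. By the matroid case (the face classification used in the polyhedral proof of Theorem~\ref{thm: Maurer homotopy theorem}), the $2$-faces of a matroid basis polytope have boundary cycles of length $3$ or $4$. These are exactly the elementary moves allowed in the definition of combinatorial homotopy for the polymatroid basis graph, namely triangles $x,\,x+\bfe_i-\bfe_j,\,x+\bfe_i-\bfe_k$ and squares. Hence every closed walk is combinatorially homotopic to the trivial walk. The statement for two walks with common endpoints follows by concatenating one walk with the reverse of the other.

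\textbf{Main obstacle.} The main difficulty is Step 1: verifying that this integer-hyperplane subdivision really is a polyhedral complex whose cells are matroid polytopes, and whose $1$-skeleton is exactly the basis graph with no extra edges. If that is awkward to prove directly, the fallback is a different construction: replace each element $i$ by $r(\{i\})$ parallel copies, which yields a matroid whose bases project onto the polymatroid bases. I would then push Maurer's theorem through the projection, checking that the fibres are connected by homotopies that project to trivial ones.
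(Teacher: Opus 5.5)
Your overall architecture matches the paper's. You tile $\pP_J$ by translated matroid basis polytopes whose vertices are the bases and whose edges are the unit segments in directions $\bfe_i-\bfe_j$, use that the union $\pP_J$ is contractible, and conclude with Lemma~\ref{lem: 2-face} and Proposition~\ref{prop: main2}. However, Step~1 as written specifies the wrong subdivision, and with it the step fails. Cutting by the hyperplanes $x_i-x_j\in\bZ$, or by all $x(A)\in\bZ$, is not equivalent to cutting by unit lattice cubes. Both arrangements create vertices that are not lattice points:
\begin{itemize}
\item for $U_{1,3}$, the lines $x_i=x_j$ cut $\mathrm{conv}\{\bfe_1,\bfe_2,\bfe_3\}$ into six triangles, with vertices at the edge midpoints and at $(\tfrac13,\tfrac13,\tfrac13)$;
\item for $U_{2,4}$, inside $\sum_i x_i=2$ the hyperplanes $x_1+x_2=1$, $x_1+x_3=1$, $x_1+x_4=1$ meet only at the center $(\tfrac12,\tfrac12,\tfrac12,\tfrac12)$ of the octahedron.
\end{itemize}
The resulting cells are not matroid polytopes, and the $1$-skeleton is not $G_J$. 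The subdivision you need uses only the coordinate hyperplanes $x_i\in\bZ$, i.e.\ the cells $\pR_\alpha=\pP_J\cap(\alpha+[0,1]^n)$ with $\alpha\in\bZ_{\ge0}^n$. This is what your quoted ``standard fact'' is actually about, and it is exactly the paper's construction. Also, there are no ``octahedron-type'' $2$-faces: by Lemma~\ref{lem: 2-face} every $2$-face of a matroid basis polytope is a triangle or a square.

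With that correction, the checks you flag as the main obstacle are what the paper proves. Lemma~\ref{lem: decomposition of polymatroid polytope} establishes the standard fact, in two parts.
\begin{itemize}
\item Suppose a vertex $\beta$ of $(\pP_J-\alpha)\cap[0,1]^n$ has $0<\beta_i<1$. It lies in the relative interior of a face $F$ of $\pP_J-\alpha$ whose direction space is spanned by vectors $\bfe_a-\bfe_b$. So the coordinate sums of $\beta$ over the connected blocks of these directions are integers. This yields a second fractional coordinate $\beta_{i'}$ in the block of $i$, and moving along $\pm(\bfe_i-\bfe_{i'})$ stays in $F$ and in the cube, a contradiction.
\item The exchange axiom then restricts to the cube, since $\beta_i>\beta'_i$ and $\beta_j<\beta'_j$ force $\beta_i=1$ and $\beta_j=0$.
\end{itemize}
The remaining checks are as follows.
\begin{itemize}
\item The complex property holds because $\pR_\alpha\cap\pR_{\alpha'}$ is $\pR_\alpha$ cut by a face of the cube $\alpha+[0,1]^n$ (Lemma~\ref{lem: simple description of polyhedral complex}).
\item The vertices are the lattice points of $\pP_J$, i.e.\ the bases (Theorem~\ref{thm: integer points of polymatroid polytope}).
\item A segment $[\beta,\beta+\bfe_i-\bfe_j]$ between bases lies in the single cube $\beta-\bfe_j+[0,1]^n$, hence is an edge of that cell by Lemma~\ref{lem: 1-skeleton}.
\end{itemize}
Your Steps~2 and~3 then go through as you describe, and the parallel-copies fallback is unnecessary.
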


This paper is self-contained. To provide an overview of the argument, we sketch the proof in the matroid case.
\begin{enumerate}[label=(\arabic*)]
    \item For a connected simple graph $G$, any two walks with the same end-points are combinatorially homotopic if and only if $\pi_1(X^{34}_G)$ is trivial, where $X^{34}_G$ is the topological space obtained from $G$ by attaching $2$-cells along the boundaries of its $3$- and $4$-cycles.
    \item The basis graph $G_M$ of a matroid $M$ is the $1$-skeleton of the basis polytope of $M$.
    \item The $2$-skeleton of the basis polytope is simply connected.
    \item Every $2$-face of the basis polytope is either a triangle or a square.
    \item Together, the preceding facts imply Theorem~\ref{thm: Maurer homotopy theorem}.
    
\end{enumerate}

The paper is organized as follows. In Section~\ref{sec: Maurer homotopy}, we recall Maurer's definition of combinatorial homotopy and develop a polyhedral tool for proving combinatorial homotopy theorems.
In Sections~\ref{sec: matroid}, \ref{sec: delta-matroid}, and \ref{sec: polymatroid}, we apply this tool to matroids (Theorem~\ref{thm: Maurer homotopy theorem}), delta-matroids (Theorem~\ref{thm: homotopy theorem for delta matroids}), and polymatroids (Theorem~\ref{thm: polymatroid homotopy theorem}), respectively. In Section~\ref{antisymmetric matroid}, we deduce the homotopy theorem for antisymmetric matroids (Theorem~\ref{thm: Maurer homotopy theorem for antisymmetric matroids}) from the delta-matroid result.

Our proofs rely heavily on the Gelfand--Serganova theorem for delta-matroids. Because a proof is difficult to extract from the existing literature, we include one in Section~\ref{sec: GGMS for delta-matroids}.

Historically, Wenzel recast Maurer's homotopy theorem as a homological statement, referring to it as an ``algebraic reformulation'' of Maurer's theorem; see the paragraph preceding~\cite[Theorem~1.12]{Wenzel1996}. It is unclear to us, however, why the homological statement should imply the homotopical statement. We discuss this issue in detail in Section~\ref{sec: homology}. Finally, in Section~\ref{sec: high dim}, we briefly discuss a higher-dimensional generalization of the homological property.

\section{A homotopy theory for graphs}\label{sec: Maurer homotopy}

We assume that all graphs are undirected, simple, and connected. The graphs of interest to this paper are the basis graphs of matroids and delta-matroids, and they satisfy the assumption. 

A \emph{walk} in a graph is a sequence of vertices such that any two consecutive vertices are adjacent. A walk is \emph{closed} if its first and last vertices coincide. When we want to highlight that the first (and last) vertex of a closed walk is $v$, the closed walk is said to be \emph{$v$-based}.

We review Maurer's original definition of homotopy for graphs; see also Figure~\ref{fig:elementary-transformations}. 
\begin{defn}[\cite{Maurer1973}]
 Let $G$ be a graph and let $W_1$ and $W_2$ be two walks in $G$. 
 \begin{enumerate}[label=\rm(\roman*)]
    \item If $W_1 = v_0 \cdots v_{k-1} v_{k} v_{k+1} \cdots v_l$ and $W_2 = v_0 \cdots v_{k-1} v_{k+2} \cdots v_l$
for some $k$ with $v_{k-1} = v_{k+1}$, we say that $W_1$ and $W_2$ differ by a \emph{deletion}.
    \item If $W_1 = v_0 \cdots v_{k-1} v_k v_{k+1} \cdots v_l$ and $W_2 = v_0 \cdots v_{k-1} v_{k+1} \cdots v_l$
for some $k$ with $v_{k-1} \ne v_{k+1}$, we say that $W_1$ and $W_2$ differ by a \emph{shortcut}.
    \item If $W_1 = v_0 \cdots v_{k-1} v_k v_{k+1} \cdots v_l$ and $W_2 = v_0 \cdots v_{k-1} v_k' v_{k+1} \cdots v_l$
for some $k$ and distance-$2$ vertices $v_{k-1}$ and $v_{k+1}$, 
        we say that $W_1$ and $W_2$ differ by a \emph{2-switch}.
\end{enumerate}
These three are called \emph{elementary transformations} on walks.
Two walks are \emph{combinatorially homotopic}\footnote{In Maurer's paper \cite{Maurer1973}, ``combinatorially homotopic'' is just ``homotopic''. We add the adjective because later we will also use algebraic topology. } if one can be obtained from the other by a sequence of elementary transformations. A $v$-based closed walk is \emph{combinatorially null-homotopic} if it is combinatorially homotopic to $v$ (viewed as a trivial walk).  

\end{defn}

\begin{figure}[!ht]
    \centering
    \begin{tikzpicture}
        \draw[color=gray] 
            (0,0) -- (14.25,0)
            (0,-2.5) -- (14.25,-2.5)
            (0,-5.0) -- (14.25,-5.0);

        \draw[color=gray] 
            (1,1) -- (1,-5)
            (5.25,1) -- (5.25,-5)
            (9.75,1) -- (9.75,-5)
            (14.25,1) -- (14.25,-5);

        \node at (0.5,-1.25) {$W_1$};
        \node at (0.5,-3.75) {$W_2$};

        \node at (3.125,0.5) {deletion};
        \node at (7.5,0.5) {shortcut};
        \node at (12.0,0.5) {$2$-switch};

        \begin{scope}[xshift=2cm, yshift=-1.5cm]
            \coordinate (v0) at (0,0);
            \coordinate (v1) at (1.2,0);
            \coordinate (w) at (1.2,0.9);
            \coordinate (v2) at (2.4,0);

            \node at (v0) [circle,fill,inner sep=1.5pt] {};
            \node at (v1) [circle,fill,inner sep=1.5pt] {};
            \node at (w) [circle,fill,inner sep=1.5pt] {};
            \node at (v2) [circle,fill,inner sep=1.5pt] {};

            \draw[line width=0.7mm] (v0) -- (v1) -- (w) -- (v1) -- (v2);

            \draw
                (v0) node[below] {$v_{k-2}$}
                (v1) node[below] {$v_{k-1}$}
                (v1) node[below, yshift=-3.5mm] {$=v_{k+1}$}
                (w) node[above] {$v_{k}$}
                (v2) node[below] {$v_{k+2}$};
        \end{scope}
        \begin{scope}[xshift=2cm,yshift=-3.75cm]
            \coordinate (v0) at (0,0);
            \coordinate (v1) at (1.2,0);
            \coordinate (w) at (1.2,1.0);
            \coordinate (v2) at (2.4,0);

            \node at (v0) [circle,fill,inner sep=1.5pt] {};
            \node at (v1) [circle,fill,inner sep=1.5pt] {};
\node at (v2) [circle,fill,inner sep=1.5pt] {};

            \draw[line width=0.7mm] (v0) -- (v1) -- (v2);

            \draw
                (v0) node[below] {$v_{k-2}$}
                (v1) node[below] {$v_{k-1}$}
(v2) node[below] {$v_{k+2}$};
        \end{scope}

        \begin{scope}[xshift=6cm,yshift=-1.5cm]
            \coordinate (v0) at (0,0);
            \coordinate (v1) at (1.0,0);
            \coordinate (w) at (1.5,0.8);
            \coordinate (v2) at (2.0,0);
            \coordinate (v3) at (3.0,0);

            \node at (v0) [circle,fill,inner sep=1.5pt] {};
            \node at (v1) [circle,fill,inner sep=1.5pt] {};
            \node at (w) [circle,fill,inner sep=1.5pt] {};
            \node at (v2) [circle,fill,inner sep=1.5pt] {};
            \node at (v3) [circle,fill,inner sep=1.5pt] {};

            \draw[line width=0.7mm] (v0) -- (v1) -- (w) -- (v2) -- (v3);

            \draw
                (v0) node[below] {$v_{k-2}$}
                (v1) node[below] {$v_{k-1}$}
                (w) node[above] {$v_{k}$}
                (v2) node[below] {$v_{k+1}$}
                (v3) node[below] {$v_{k+2}$};
        \end{scope}
        \begin{scope}[xshift=6cm, yshift=-3.75cm]
            \coordinate (v0) at (0,0);
            \coordinate (v1) at (1.0,0);
            \coordinate (w) at (1.5,0.8);
            \coordinate (v2) at (2.0,0);
            \coordinate (v3) at (3.0,0);

            \node at (v0) [circle,fill,inner sep=1.5pt] {};
            \node at (v1) [circle,fill,inner sep=1.5pt] {};
\node at (v2) [circle,fill,inner sep=1.5pt] {};
            \node at (v3) [circle,fill,inner sep=1.5pt] {};

            \draw[line width=0.7mm] (v0) -- (v1) -- (v2) -- (v3);

            \draw
                (v0) node[below] {$v_{k-2}$}
                (v1) node[below] {$v_{k-1}$}
(v2) node[below] {$v_{k+1}$}
                (v3) node[below] {$v_{k+2}$};
        \end{scope}

        \begin{scope}[xshift=10.5cm,yshift=-1.5cm]
            \coordinate (v0) at (0,0);
            \coordinate (v1) at (1.0,0);
            \coordinate (w) at (1.5,0.8);
            \coordinate (v2) at (2.0,0);
            \coordinate (v3) at (3.0,0);

            \node at (v0) [circle,fill,inner sep=1.5pt] {};
            \node at (v1) [circle,fill,inner sep=1.5pt] {};
            \node at (w) [circle,fill,inner sep=1.5pt] {};
            \node at (v2) [circle,fill,inner sep=1.5pt] {};
            \node at (v3) [circle,fill,inner sep=1.5pt] {};

            \draw[line width=0.7mm] (v0) -- (v1) -- (w) -- (v2) -- (v3);

            \draw
                (v0) node[below] {$v_{k-2}$}
                (v1) node[below] {$v_{k-1}$}
                (w) node[above] {$v_{k}$}
                (v2) node[below] {$v_{k+1}$}
                (v3) node[below] {$v_{k+2}$};
        \end{scope}
        \begin{scope}[xshift=10.5cm,yshift=-3.5cm]
            \coordinate (v0) at (0,0);
            \coordinate (v1) at (1.0,0);
            \coordinate (w) at (1.5,-0.8);
            \coordinate (v2) at (2.0,0);
            \coordinate (v3) at (3.0,0);

            \node at (v0) [circle,fill,inner sep=1.5pt] {};
            \node at (v1) [circle,fill,inner sep=1.5pt] {};
            \node at (w) [circle,fill,inner sep=1.5pt] {};
            \node at (v2) [circle,fill,inner sep=1.5pt] {};
            \node at (v3) [circle,fill,inner sep=1.5pt] {};

            \draw[line width=0.7mm] (v0) -- (v1) -- (w) -- (v2) -- (v3);

            \draw
                (v0) node[below] {$v_{k-2}$}
                (v1) node[above] {$v_{k-1}$}
                (w) node[below] {$v_{k}'$}
                (v2) node[above] {$v_{k+1}$}
                (v3) node[below] {$v_{k+2}$};
        \end{scope}

    \end{tikzpicture}
    \caption{Local pictures of the three elementary transformations on walks.}
    \label{fig:elementary-transformations}
\end{figure}

\begin{rmk}\label{rmk: fake 2-switch}
In a $2$-switch, the vertices $v_{k-1}$ and $v_{k+1}$ having distance-$2$ implies that there is no edge connecting them. However, when there is such an edge, $W_1$ and $W_2$ in (iii) are still combinatorially homotopic by two shortcuts. 
\end{rmk}

We relate Maurer's notion of homotopy to the fundamental group of a graph, defined combinatorially as in \cite[Section~2.5]{TGT}. Let $W_1$ and $W_2$ be two walks in a graph $G$ such that the first vertex of $W_2$ equals the last vertex of $W_1$. Then the product walk $W_1W_2$ is constructed by extending the vertex sequence of $W_1$ by the vertex sequence of $W_2$ (and merging the two middle equal vertices). Fix a base vertex $v$ of $G$.
Certainly, the product of two $v$-based closed walks is still $v$-based. Two $v$-based closed walks $W_1$ and $W_2$ are called \emph{equivalent} if there is a sequence $W_1,W^{(1)},\cdots, W^{(k)},W_2$ of $v$-based closed walks such that any two consecutive walks differ by a deletion (in Maurer's sense). We denote by $[W]$ the equivalence class of a $v$-based closed walk $W$. If a walk does not contain three consecutive vertices of the form $v_1v_2v_1$, then the walk is said to be \emph{reduced}. It is easy to check that every $v$-based closed walk is equivalent to a unique reduced walk. The equivalence classes of $v$-based closed walks form a group where $[W_1]\cdot[W_2]:=[W_1W_2]$. We also have $[W]^{-1}=[W^{-1}]$, where $W^{-1}$ is the reverse walk of $W$. We call this group \emph{the fundamental group of the graph $G$ based at $v$}, denoted by $\pi_1(G,v)$. 

We define a \emph{$k$-cycle} as a closed walk whose vertex sequence has exactly $k+1$ vertices and exactly $k$ distinct vertices. 
For the fixed base vertex $v$ of $G$, let \[N_v^{34}\lhd \pi_1(G,v)\] be the subgroup of $\pi_1(G,v)$ generated by the elements of the form $[WCW^{-1}]$, where $C$ is a $3$-cycle or a $4$-cycle and $W$ is a walk from $v$ to the base vertex of $C$ (which might not be the same as the base vertex of $G$). It is easy to check by definition that $N_v^{34}$ is a normal subgroup. 

After showing the following two lemmas, we will see that the combinatorial homotopy can be characterized in terms of the group $N_v^{34}$.

\begin{lem}\label{lem: 34cyclesaretrivial}
Let $C$ be a $3$-cycle or a $4$-cycle and $W$ be a walk from a vertex $v$ to the base vertex $v_1$ of $C$. Then the $v$-based closed walk $WCW^{-1}$ is combinatorially null-homotopic. 
\end{lem}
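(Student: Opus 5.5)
The plan is to contract $C$ first, then cancel $W$ against $W^{-1}$ by deletions. Write $W = v u_1 \cdots u_{m-1} v_1$, and let $C = v_1 v_2 v_3 v_1$ or $C = v_1 v_2 v_3 v_4 v_1$. Elementary transformations act locally on a consecutive subsequence, so it suffices to show that the subwalk $C$, sitting inside $WCW^{-1}$, can be transformed into the single vertex $v_1$ while the surrounding vertices stay fixed. Once that is done we are left with $WW^{-1} = v u_1 \cdots u_{m-1} v_1 u_{m-1} \cdots u_1 v$. This collapses to $v$ by $m$ successive deletions: each step removes the central pattern $u_{m-1} v_1 u_{m-1}$ down to $u_{m-1}$, and so on outward. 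If $W$ is trivial, this step is vacuous.

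For a $3$-cycle $v_1v_2v_3v_1$, the vertices $v_1$ and $v_3$ are adjacent. So $v_1 v_2 v_3$ with $v_1 \neq v_3$ admits a shortcut to $v_1 v_3$. The walk becomes $v_1 v_3 v_1$, and a deletion reduces it to $v_1$.

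For a $4$-cycle $v_1v_2v_3v_4v_1$, the four vertices are distinct. There are two cases.
\begin{enumerate}
\item If $v_1 \sim v_3$, a shortcut removes $v_2$ (since $v_1 \neq v_3$), leaving the $3$-cycle $v_1 v_3 v_4 v_1$, which we handle as above.
\item If $v_1 \not\sim v_3$, then $v_1$ and $v_3$ are at distance exactly $2$, because they share the neighbor $v_2$. A $2$-switch then replaces $v_2$ by $v_4$, which is also a common neighbor, giving $v_1 v_4 v_3 v_4 v_1$. Two deletions collapse this: first $v_4 v_3 v_4 \to v_4$, giving $v_1 v_4 v_1$, and then $v_1 v_4 v_1 \to v_1$.
\end{enumerate}
Alternatively, Remark~\ref{rmk: fake 2-switch} covers both cases uniformly, since it says replacing $v_2$ by $v_4$ is always achievable by elementary transformations.

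There is no serious obstacle here. The only point needing care is the boundary cases of the definitions. The $2$-switch requires distance exactly $2$, which is why the case split or Remark~\ref{rmk: fake 2-switch} is needed. The shortcut requires $v_{k-1} \neq v_{k+1}$, which holds by distinctness of the vertices of the cycle. Also, when the subwalk sits at the very start or end of the whole walk, the transformations still apply, because they only reference $v_{k-1}, v_k, v_{k+1}$.
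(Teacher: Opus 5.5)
Your proof is correct and follows the paper's argument essentially step for step: you reduce to contracting $C$ to $v_1$, handle the $3$-cycle by a shortcut and a deletion, and split the $4$-cycle case according to whether $v_1v_3$ is an edge (reduce to the $3$-cycle case) or not (a $2$-switch to $v_1v_4v_3v_4v_1$ followed by two deletions). Your explicit cancellation of $WW^{-1}$ by $m$ deletions just spells out the step the paper leaves implicit.
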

\begin{proof}
It is enough to show that $C$ is combinatorially homotopic to the trivial walk $v_1$. 

When $C$ is a $3$-cycle, denote $C=v_1v_2v_3v_1$. Then $C$ is combinatorially homotopic to the closed walk $v_1v_3v_1$ by a shortcut and then to $v_1$ by a deletion. 

When $C$ is a $4$-cycle, denote $C=v_1v_2v_3v_4v_1$. If $v_1v_3$ is an edge, then $C$ can be decomposed into two $3$-cycles, and we may apply the argument for $3$-cycles twice to prove the desired result. Otherwise, $C$ is combinatorially homotopic to the closed walk $v_1v_4v_3v_4v_1$ by a $2$-switch and then to $v_1$ by two deletions. 
\end{proof}
\begin{lem}
Let $W_1$ and $W_2$ be two $v$-based closed walks. Then $W_1$ is combinatorially homotopic to $W_2$ if and only if $[W_1]\cdot[W_2]^{-1}\in N_v^{34}$ (or equivalently, $[W_1W_2^{-1}]\in N_v^{34}$). 
\end{lem}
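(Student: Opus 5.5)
The plan is to prove the two directions separately, working throughout with equivalence classes in $\pi_1(G,v)$.

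For the forward direction, it suffices to treat a single elementary transformation. So suppose $W_1$ and $W_2$ are $v$-based closed walks that differ by one elementary transformation; I will show $[W_1W_2^{-1}]\in N_v^{34}$. Since the relation ``$[W_1][W_2]^{-1}\in N_v^{34}$'' is an equivalence relation on closed walks (it is congruence modulo a subgroup), the statement for a whole sequence of transformations then follows by transitivity.

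Note that the intermediate walks in a sequence of elementary transformations between two $v$-based closed walks are automatically $v$-based and closed. Indeed, each transformation fixes the first and last vertex. The one exception is a shortcut or deletion that removes the endpoint itself, and this cannot happen: a shortcut or deletion removes an interior vertex $v_k$ with $0<k<l$.

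Write $W_1 = A\,v_{k-1}v_kv_{k+1}\,B$, where $A$ ends at $v_{k-1}$ and $B$ starts at $v_{k+1}$. There are three cases.
\begin{itemize}
\item \emph{Deletion.} Then $W_1$ and $W_2$ are equivalent, so $[W_1W_2^{-1}]$ is the identity.
\item \emph{Shortcut.} Here $W_2=A\,v_{k-1}v_{k+1}\,B$. Then $W_1W_2^{-1}$ reduces, by deletions cancelling $B B^{-1}$, to $A\,(v_{k-1}v_kv_{k+1}v_{k-1})\,A^{-1}$. The middle factor is a $3$-cycle, since $v_{k-1}\neq v_{k+1}$ are adjacent and $v_k$ differs from both (the graph is simple). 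Hence the class lies in $N_v^{34}$.
\item \emph{$2$-switch.} Similarly $W_1W_2^{-1}$ reduces to $A\,(v_{k-1}v_kv_{k+1}v_k'v_{k-1})\,A^{-1}$. If $v_k'=v_k$, this is trivial after deletions. Otherwise the middle factor has four distinct vertices, because $v_{k-1}\ne v_{k+1}$ are at distance $2$ and neither equals $v_k$ or $v_k'$. So it is a $4$-cycle, and the class lies in $N_v^{34}$.
\end{itemize}

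For the converse, first observe that combinatorial homotopy is compatible with products: if $U\simeq U'$ then $XUY\simeq XU'Y$, since elementary transformations are local. Moreover, $W_2^{-1}W_2$ reduces to $v$ by deletions. Now suppose $[W_1W_2^{-1}]\in N_v^{34}$. Then $W_1W_2^{-1}$ is equivalent via deletions to a product $P=\prod_i (U_iC_i^{\pm1}U_i^{-1})$ of generators and their inverses; this holds because equality in $\pi_1(G,v)$ means equivalence under deletions and insertions, and the reverse of a deletion is again an elementary transformation.

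Each factor $U_iC_i^{\pm1}U_i^{-1}$ is combinatorially null-homotopic by Lemma~\ref{lem: 34cyclesaretrivial}. For $C_i^{-1}$ this also applies, since the reverse of a $3$- or $4$-cycle is again such a cycle. Applying this to each factor inside the product, $W_1W_2^{-1}\simeq v$. Hence
\[
W_1\simeq W_1W_2^{-1}W_2\simeq vW_2=W_2,
\]
where the first step uses that $W_1$ is obtained from $W_1W_2^{-1}W_2$ by deletions.

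The only delicate step is this passage from group-theoretic equality to combinatorial homotopy. Its key point is that equality in $\pi_1$ is generated by deletions and their reverses, and that combinatorial homotopy is symmetric because all three elementary transformations are reversible by definition, each being stated as a relation between $W_1$ and $W_2$.
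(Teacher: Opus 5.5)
Your proposal is correct and follows the same route as the paper. For the forward direction, you show that a single shortcut or $2$-switch changes the class by a conjugate $[WCW^{-1}]$ of a $3$- or $4$-cycle, while deletions change nothing; for the converse, you reduce to generators and invoke Lemma~\ref{lem: 34cyclesaretrivial}. You also correctly fill in details the paper leaves implicit: that intermediate walks stay $v$-based, that the middle factor is a genuine $3$- or $4$-cycle by simplicity, the degenerate case $v_k'=v_k$, and that combinatorial homotopy is compatible with products.
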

\begin{proof}
By definition, $W_1$ and $W_2$ differ by deletions if and only if $[W_1]=[W_2]$. If $W_1$ and $W_2$ differ by a shortcut or a $2$-switch, then it is easy to see that $W_1W_2^{-1}$ is equivalent to a walk of the form $WCW^{-1}$, where $C$ is a $3$-cycle or a $4$-cycle. Hence $[W_1]\cdot[W_2]^{-1}=[W_1W_2^{-1}]=[WCW^{-1}]$. Thus the ``only if'' part holds. 

For the ``if'' part, it is enough to show that if $[W_1]=[WCW^{-1}]\cdot[W_2]$ for a generator $[WCW^{-1}]$ of the group $N_v^{34}$, then $W_1$ is combinatorially homotopic to $W_2$. We know that this holds by Lemma~\ref{lem: 34cyclesaretrivial}. 
\end{proof}

Consequently, we have the following result. \begin{cor}\label{cor: group Maurer}
Let $G$ be a graph and let $v$ be a vertex. Then every $v$-based closed walk is combinatorially null-homotopic if and only if $\pi_1(G,v)/N_v^{34}$ is a trivial group.
\end{cor}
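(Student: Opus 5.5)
The corollary follows directly from the preceding lemma, applied with $W_2$ the trivial walk $v$. Two facts about this walk are needed. First, the trivial walk $v$ is itself a $v$-based closed walk. Second, its class $[v]$ is the identity of $\pi_1(G,v)$: for any $v$-based closed walk $W$, the product walk $Wv$ is literally $W$, since the middle equal vertices are merged. Hence $[W]\cdot[v]^{-1}=[W]$.

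By the lemma, a $v$-based closed walk $W$ is combinatorially null-homotopic, that is, combinatorially homotopic to $v$, if and only if $[W]\in N_v^{34}$. The statement ``every $v$-based closed walk is combinatorially null-homotopic'' is therefore equivalent to ``$[W]\in N_v^{34}$ for every $v$-based closed walk $W$.''

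Every element of $\pi_1(G,v)$ is by definition the class $[W]$ of some $v$-based closed walk $W$. So the last condition says exactly that $N_v^{34}=\pi_1(G,v)$. Since $N_v^{34}$ is normal, the quotient $\pi_1(G,v)/N_v^{34}$ is a group, and $N_v^{34}=\pi_1(G,v)$ holds if and only if this quotient is trivial. Both directions of the corollary follow.

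There is no real obstacle in this argument. The only point needing care is that the lemma is stated for pairs of $v$-based closed walks, and the trivial walk qualifies as one. One should also check that combinatorial homotopy between closed walks does not require the intermediate walks to stay closed and $v$-based. It does not: elementary transformations never change the endpoints of a walk, so every intermediate walk is automatically $v$-based and closed, and the lemma applies as stated.
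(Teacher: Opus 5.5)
Your proposal is correct and follows the paper's route: the paper presents this corollary as an immediate consequence of the preceding lemma, which is exactly your specialization to $W_2=v$, combined with the fact that every element of $\pi_1(G,v)$ is some $[W]$, so the condition reduces to $N_v^{34}=\pi_1(G,v)$. Your remark that elementary transformations preserve endpoints is accurate; it just makes explicit what the lemma's proof already uses tacitly.
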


We may view a graph $G$ as a CW complex, denoted by $X_G$, where the $0$-cells and the $1$-cells are the vertices and edges, respectively. 

\begin{defn}\label{def: X_G}
For a graph $G$, let $X^{34}_G$ be the topological space obtained from $G$ by attaching $2$-cells along the boundary of each $3$- and $4$-cycle of $G$. (In this definition, cycles with the same underlying edge set despite having different base vertices are considered to be the same.)
\end{defn}

\begin{lem}\label{lem: two pie1}
We have a group isomorphism $\pi_1(G,v)/N_v^{34}\cong\pi_1(X^{34}_G,v)$.\end{lem}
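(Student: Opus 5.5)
The plan is to compare the combinatorial fundamental group of $G$ with the topological fundamental group of the $1$-dimensional CW complex $X_G$, and then use the standard description of how attaching $2$-cells changes $\pi_1$.

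\textbf{Step 1: the graph case.} We show that $\pi_1(G,v)\cong\pi_1(X_G,v)$. Send a $v$-based closed walk $v_0v_1\cdots v_l$ to the loop that traverses the edges $v_{i-1}v_i$ in order, each at unit speed. This map respects products, because concatenating walks corresponds to concatenating loops. It respects deletions, because a backtrack $v_{k-1}v_kv_{k-1}$ gives a null-homotopic detour. So it descends to a homomorphism $\Phi\colon\pi_1(G,v)\to\pi_1(X_G,v)$. To see that $\Phi$ is an isomorphism, we use a spanning tree $T$ of $G$.
\begin{itemize}
\item On the topological side, $X_T$ is contractible, so $\pi_1(X_G,v)$ is free on the edges not in $T$ (the standard result for graphs).
\item On the combinatorial side, the same edges give a free basis of $\pi_1(G,v)$. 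The generator for an edge $e=xy\notin T$ is $[P_x\,xy\,P_y^{-1}]$, where $P_x$ is the tree path from $v$ to $x$. Freeness holds because reduced closed walks correspond bijectively to reduced words in these generators: delete the tree portions and record the non-tree edges traversed.
\end{itemize}
The map $\Phi$ sends one basis to the other, so it is an isomorphism. As an alternative, one can invoke cellular approximation: every loop in $X_G$ is homotopic to an edge-path, and homotopies of edge-paths reduce to insertions and removals of backtracks. This is the statement that the edge-path group of a $1$-complex equals its $\pi_1$.

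\textbf{Step 2: attaching $2$-cells.} $X^{34}_G$ is obtained from $X_G$ by attaching one $2$-cell for each $3$- or $4$-cycle $C$, along the attaching map $\varphi_C\colon S^1\to X_G$ that traces $C$. By the van Kampen consequence for attaching $2$-cells (Hatcher, Proposition~1.26), the inclusion induces a surjection $\pi_1(X_G,v)\to\pi_1(X^{34}_G,v)$. Its kernel is the normal subgroup generated by the classes $\gamma\varphi_C\gamma^{-1}$, where $\gamma$ ranges over paths from $v$ to a point of the boundary of the cell. Under $\Phi$, and taking $\gamma$ to be edge-paths (which loses nothing, by Step 1), these classes are exactly the images of $[WCW^{-1}]$. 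Changing the base vertex of $C$, or reversing its orientation, only conjugates or inverts the class, so the choice made in Definition~\ref{def: X_G} of one cell per underlying edge set does not affect the normal subgroup. Hence $\Phi(N_v^{34})$ is precisely this kernel.

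\textbf{Conclusion.} Combining the two steps gives
\[
\pi_1(G,v)/N_v^{34}\cong\pi_1(X_G,v)/\Phi(N_v^{34})\cong\pi_1(X^{34}_G,v).
\]
The main obstacle is Step 1, namely a rigorous identification of the combinatorial fundamental group with the topological one. I would handle it through the spanning-tree free-basis argument, since it is self-contained and needs only $\pi_1$ of a wedge of circles.
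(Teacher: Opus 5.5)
Your proposal is correct and follows essentially the same route as the paper: identify the combinatorial group $\pi_1(G,v)$ with the topological group $\pi_1(X_G,v)$, then apply Hatcher's Proposition~1.26(a) to the attached $2$-cells. The only difference is one of detail. The paper cites Spanier's Theorem~3.6.17 for the first step, whereas you prove it directly with a spanning tree. You also spell out why $\Phi(N_v^{34})$ is exactly Hatcher's normal subgroup, including why one cell per underlying edge set suffices.
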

\begin{proof}
The fundamental group $\pi_1(G,v)$ is isomorphic to the usual fundamental group where $G$ is viewed as a CW complex (\cite[Thm~3.6.17]{Spanier}). Then \cite[Prop.~1.26(a)]{Hatcher} implies the isomorphism. \end{proof}

Since the fundamental group of a path-connected topological space is, up to isomorphism, independent of the choice of basepoint, so are $\pi_1(G,v)$ and $\pi_1(G,v)/N_v^{34}$.

\begin{prop}\label{prop: topology Maurer}
Let $G$ be a connected simple graph. Then the following are equivalent:
\begin{enumerate}[label=\rm(\roman*)]
    \item any two walks with the same end-points are combinatorially homotopic;
    \item for any base vertex $v$, any $v$-based closed walk is combinatorially null-homotopic;
    \item there exists a vertex $v$ such that any $v$-based closed walk is combinatorially null-homotopic;
    \item $\pi_1(X^{34}_G)$ is a trivial group. 
\end{enumerate}
\end{prop}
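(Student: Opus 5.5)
The plan is to prove the cycle of implications (i)$\Rightarrow$(ii)$\Rightarrow$(iii)$\Rightarrow$(iv)$\Rightarrow$(i). Most steps follow directly from Corollary~\ref{cor: group Maurer} and Lemma~\ref{lem: two pie1}. The only genuine combinatorial work is returning from closed walks to arbitrary walks with common end-points.

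The first implications are immediate. (i)$\Rightarrow$(ii) is a specialization: a $v$-based closed walk and the trivial walk $v$ have the same end-points. (ii)$\Rightarrow$(iii) holds because $G$ has at least one vertex. For (iii)$\Rightarrow$(iv), Corollary~\ref{cor: group Maurer} shows that $\pi_1(G,v)/N_v^{34}$ is trivial for the chosen $v$. Lemma~\ref{lem: two pie1} then gives $\pi_1(X^{34}_G,v)$ trivial. Since $G$ is connected, $X^{34}_G$ is path-connected, so its fundamental group is independent of basepoint, and $\pi_1(X^{34}_G)$ is trivial.

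For (iv)$\Rightarrow$(i), let $W_1,W_2$ be walks from $u$ to $w$. Running Lemma~\ref{lem: two pie1} and Corollary~\ref{cor: group Maurer} backwards, with basepoint $u$, shows that every $u$-based closed walk is combinatorially null-homotopic. In particular $W_1W_2^{-1}$ is, so $W_1W_2^{-1}$ can be transformed into $u$ by elementary transformations.

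It remains to deduce that $W_1$ is combinatorially homotopic to $W_2$. This uses three facts.
\begin{enumerate}[label=(\alph*)]
\item Elementary transformations are local and reversible. Hence combinatorial homotopy is an equivalence relation compatible with concatenation: if $A\simeq A'$, then $AB\simeq A'B$ and $BA\simeq BA'$ whenever the products are defined.
\item For any walk $B$, $B^{-1}B$ is reduced to its initial vertex by repeated deletions, peeling off the middle pair each time.
\item Concatenating with a trivial walk does nothing.
\end{enumerate}
Applying these gives the chain
\[
W_1 \simeq W_1(W_2^{-1}W_2) = (W_1W_2^{-1})W_2 \simeq u\,W_2 = W_2 .
\]
Here the first step inverts the deletions of (b), and the last step uses the null-homotopy of $W_1W_2^{-1}$ together with (a) and (c).

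The main obstacle is the compatibility claim in (a). One must check that each elementary transformation applied inside a sub-walk remains a legal elementary transformation in the longer walk. This holds because each of the three transformations depends only on the local triple $v_{k-1}v_kv_{k+1}$ and its adjacency and distance data in $G$, not on the rest of the walk.

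There is one boundary case to verify. A deletion at the junction of a concatenation, or a deletion that empties a sub-walk down to a single vertex, must still be of the stated form in the longer walk. This should be harmless, since merging equal end-vertices in a product is exactly the convention used to define $W_1W_2$.
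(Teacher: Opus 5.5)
Your proposal is correct and follows essentially the same route as the paper. The paper also gets the equivalence of (ii), (iii), (iv) from Corollary~\ref{cor: group Maurer}, Lemma~\ref{lem: two pie1} and basepoint independence, and returns to arbitrary walks via $W_1\simeq W_1(W_2^{-1}W_2)=(W_1W_2^{-1})W_2\simeq W_2$; your explicit check that elementary transformations remain legal inside a concatenation, including at the junction vertex, supplies detail the paper leaves implicit.
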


\begin{proof}
    The equivalences between (ii), (iii), and (iv) are implied by Corollary~\ref{cor: group Maurer} and Lemma~\ref{lem: two pie1}. Note that (ii) is a special case of (i). It remains to show that (ii) implies (i). Let $W_1$ and $W_2$ be two walks from $u$ to $w$. Then $W_1$ is combinatorially homotopic to $W_1(W_2^{-1}W_2)$ by definition, and  $(W_1W_2^{-1})W_2$ is combinatorially homotopic to $W_2$ by (ii).  
\end{proof}

So far none of the results in this section is essentially new; see the following two remarks. 

\begin{rmk}
Lemma~\ref{lem: two pie1} also appears in A-homotopy theory but in a slightly different form. For example, \cite[Prop~5.12]{BKL2001} says that the first \emph{graph homotopy group} of $G$ is isomorphic to $\pi_1(X^{34}_G)$. We do not use their notion to build our theory because it is technical to define the graph homotopy group. Also, although it is well-known in A-homotopy theory that two walks are combinatorially homotopic (in Maurer's sense) if and only if they are A-homotopic, we only find a proof for the ``only if'' direction in \cite[Section 5.1]{BL2005}, and the proof for the other direction does not seem very trivial to us. Based on these considerations, we use the current definition of $\pi_1(G,v)$ rather than their graph homotopy group to relate Maurer's homotopy to $\pi_1(X^{34}_G)$. We believe that this approach is more direct and simple. 
\end{rmk}

\begin{rmk}
The space $X^{34}_G$ also appeared in \cite{Lovasz} and \cite{CCO2015}. 
\end{rmk}

We will use a geometric approach (Proposition~\ref{prop: main}) to prove $\pi_1(X^{34}_G)=1$ when $G$ is the basis graph of a matroid or a delta-matroid. Before that, we recall some basics of polytope theory. For details, we refer the reader to Ziegler's book \cite{Ziegler1995}.

A (convex and bounded) \emph{polytope} $\pP$ is the convex hull of finitely many points in a Euclidean space. A \emph{face} of a polytope is the set of all points within the polytope that maximize the value of some linear functional.
The faces of dimension $i$ are called $i$-faces. The $0$-faces and $1$-faces are called \emph{vertices} and \emph{edges}, respectively. The vertices and edges of a polytope $\pP$ form an (undirected) graph $G$, called the \emph{graph of the polytope} $\pP$. It is well known that the graph $G$ is connected. 

\begin{prop}\label{prop: main}
Let $\pP$ be a polytope, and denote its graph by $G$. If every $2$-face of $\pP$ is either a triangle or a quadrilateral, then any two walks with the same end-points in $G$ are combinatorially homotopic.
\end{prop}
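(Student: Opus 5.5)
By Proposition~\ref{prop: topology Maurer}, it suffices to show that $\pi_1(X^{34}_G)$ is trivial. The plan is to compare $X^{34}_G$ with the $2$-skeleton of $\pP$.

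Let $\pP^{(2)}$ denote the union of all faces of $\pP$ of dimension at most $2$. This is a regular CW complex: its $0$-cells are the vertices, its $1$-cells the edges, and its $2$-cells the $2$-faces, each attached along its boundary polygon. The $1$-skeleton of $\pP^{(2)}$ is exactly $X_G$, and by hypothesis every $2$-cell is attached along a $3$-cycle or a $4$-cycle of $G$. A triangular face has an induced $3$-cycle as boundary. A quadrilateral face has a $4$-cycle as boundary, and that cycle is genuinely a $4$-cycle in the sense of the paper since its four vertices are distinct. Distinct $2$-faces have distinct boundary cycles, because a $2$-face is the convex hull of its vertices. Hence $\pP^{(2)}$ is a subcomplex of $X^{34}_G$: the latter is obtained from $\pP^{(2)}$ by attaching further $2$-cells along the remaining $3$- and $4$-cycles of $G$, namely those that do not bound a $2$-face.

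The key topological input is that $\pP^{(2)}$ is simply connected. The boundary $\partial\pP$ of a $d$-polytope is a regular CW complex homeomorphic to $S^{d-1}$. By cellular approximation, the inclusion of the $2$-skeleton induces a surjection $\pi_1(\pP^{(2)})\to\pi_1(\partial\pP)$, and in fact an isomorphism, since attaching cells of dimension at least $3$ does not change $\pi_1$ (\cite[Prop.~1.26]{Hatcher}). Thus $\pi_1(\pP^{(2)})\cong\pi_1(S^{d-1})=1$ when $d\ge 3$. For $d\le 2$ the statement is direct: if $d=2$, $\pP$ itself is a triangle or quadrilateral, so $\pP^{(2)}=\pP$ is contractible; if $d\le 1$, $G$ is a point or an edge.

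Finally, attaching $2$-cells to a simply connected space keeps it simply connected. Again by \cite[Prop.~1.26(a)]{Hatcher}, attaching a $2$-cell quotients $\pi_1$ by the normal closure of its attaching loop, and the quotient of the trivial group is trivial. Hence $\pi_1(X^{34}_G)=1$, and Proposition~\ref{prop: topology Maurer} (iv)$\Rightarrow$(i) gives the claim.

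The main obstacle is justifying rigorously that $\partial\pP$ with its face structure is a CW complex whose $2$-skeleton is $\pP^{(2)}$, and that $\partial\pP\cong S^{d-1}$. I would cite Ziegler~\cite{Ziegler1995}, using radial projection from an interior point for the homeomorphism, and note that each face is a closed ball whose boundary is a union of lower-dimensional faces. A second point to handle is that extra $3$- or $4$-cycles in $G$ that are not faces pose no problem, since the argument only attaches cells and never removes them.
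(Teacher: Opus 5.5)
Your proof is correct and follows the paper's argument. You reduce to $\pi_1(X^{34}_G)=1$ via Proposition~\ref{prop: topology Maurer}, show the $2$-skeleton is simply connected using \cite[Prop.~1.26(b)]{Hatcher}, and then observe that $X^{34}_G$ arises from it by attaching further $2$-cells. The only difference is that the paper uses the whole polytope $\pP$ as the ambient complex; since $\pP$ is convex and hence contractible, this avoids your detour through $\partial\pP\cong S^{d-1}$ and your separate treatment of $d\le 2$.
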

\begin{proof}
By Proposition~\ref{prop: topology Maurer}, we need to show that the group $\pi_1(X^{34}_G)$ is trivial. 

Consider the $2$-skeleton $\pP^2$ of the polytope $\pP$. Because $\pP$ can be obtained from $\pP^2$ by attaching $n$-cells for $n>2$, by \cite[Prop.~1.26(b)]{Hatcher}, we have an isomorphism \[\pi_1(\pP^2)\cong\pi_1(\pP).\]
Since $\pi_1(\pP)$ is clearly trivial, so is $\pi_1(\pP^2)$. 

Because every $2$-face of $\pP$ is either a triangle or a square, the CW complex $X^{34}_G$ can be obtained from $\pP^2$ by attaching additional $2$-cells. By \cite[Prop.~1.26(a)]{Hatcher}, we have a surjection \[\pi_1(\pP^2)\twoheadrightarrow\pi_1(X^{34}_G),\]  
which means attaching $2$-cells imposes more relations on the fundamental group. Thus $\pi_1(X^{34}_G)$ is trivial. 
\end{proof}

Later when we study polymatroids, we will need a generalization of Proposition~\ref{prop: main} from polytopes to polyhedral complexes. 

\begin{defn}\label{def: polyhedral complex}
A \emph{polyhedral complex} $\pC$ is a non-empty finite set of polytopes in a Euclidean space satisfying the following conditions:
\begin{enumerate}
    \item Every face of a polytope in $\pC$ is in $\pC$. 
    \item For any $\pP_1,\pP_2\in \pC$, $\pP_1\cap\pP_2$ is empty or a common face of both $\pP_1$ and $\pP_2$. 
\end{enumerate}
An $i$-dimensional polytope in $\pC$ is called an $i$-face of $\pC$. The $0$-faces and $1$-faces of $\pC$ form a graph, called the \emph{graph of the polyhedral complex} $\pC$.
\end{defn}

\begin{prop}\label{prop: main2}
Let $\pC$ be a polyhedral complex such that the union of the faces in $\pC$ is a simply connected topological space. Denote its graph by $G$. Suppose every $2$-face of $\pC$ is either a triangle or a quadrilateral. Then any two walks with the same end-points in $G$ are combinatorially homotopic.
\end{prop}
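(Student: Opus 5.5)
We will follow the proof of Proposition~\ref{prop: main} almost verbatim, replacing the polytope $\pP$ by the underlying space $|\pC|$ of the complex. By Proposition~\ref{prop: topology Maurer}, it suffices to show that $\pi_1(X^{34}_G)$ is trivial. Note first that $G$ is connected: $|\pC|$ is simply connected, hence path-connected, and the $1$-skeleton of a connected CW complex is connected. So Proposition~\ref{prop: topology Maurer} applies, given that $G$ is simple (edges are segments between distinct vertices, and two distinct edges meet in a common face, so they cannot share both endpoints).

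The first step is to give $|\pC|$ a CW structure whose cells are the relative interiors of the faces of $\pC$. Each $i$-face is a convex polytope homeomorphic to a closed $i$-ball, and its boundary is the union of its proper faces, which lie in the $(i-1)$-skeleton. Condition~(2) of Definition~\ref{def: polyhedral complex} guarantees that distinct faces have disjoint relative interiors, so these open cells partition $|\pC|$. Since $\pC$ is finite, the weak topology agrees with the subspace topology. Let $\pC^2$ denote the $2$-skeleton, namely the union of the faces of dimension at most $2$. Then $|\pC|$ is obtained from $\pC^2$ by attaching cells of dimension greater than $2$, so \cite[Prop.~1.26(b)]{Hatcher} gives $\pi_1(\pC^2)\cong\pi_1(|\pC|)=1$.

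Next, the $1$-skeleton of $\pC^2$ is exactly the graph $G$, viewed as $X_G$. Each $2$-face is a triangle or a quadrilateral, and its boundary is attached along a $3$-cycle or $4$-cycle of $G$. The only point to check is that this boundary is a genuine cycle in $G$, i.e.\ its vertices and edges are vertices and edges of $\pC$; this holds by condition~(1). Two distinct $2$-faces have distinct boundary cycles, since two polygons with the same vertex set would coincide as convex hulls. Hence $X^{34}_G$ is obtained from $\pC^2$ by attaching the remaining $2$-cells, one for each $3$- or $4$-cycle of $G$ that does not bound a $2$-face. By \cite[Prop.~1.26(a)]{Hatcher}, $\pi_1(\pC^2)\twoheadrightarrow\pi_1(X^{34}_G)$, so $\pi_1(X^{34}_G)$ is trivial.

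The main obstacle is only bookkeeping: verifying that a polyhedral complex is a regular CW complex with the claimed skeleta. This is standard, using that each face is a ball with boundary equal to the union of its proper faces, so no new idea beyond Proposition~\ref{prop: main} is needed.
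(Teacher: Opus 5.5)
Your proposal is correct and follows the paper's own argument: the paper proves this by repeating the proof of Proposition~\ref{prop: main} with $\pP$ replaced by $\pC$. That is, it passes to the $2$-skeleton using Hatcher's Prop.~1.26(b), and then reaches $X^{34}_G$ by attaching further $2$-cells using Prop.~1.26(a). Your additional checks fill in details the paper leaves implicit: that $G$ is connected and simple, that $|\pC|$ carries the regular CW structure you describe, and that distinct $2$-faces have distinct boundary cycles.
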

The proof is the same as that of Proposition~\ref{prop: main}, except that the polytope $\pP$ is replaced by the polyhedral complex $\pC$.

\section{Maurer's homotopy theorem for matroids}\label{sec: matroid}
Let $[n]=\{1,2,\cdots,n\}$.
For a non-empty collection $\cB$ of subsets of $[n]$, we call the pair $([n],\cB)$ a non-empty \emph{set system}. For $B\in \cB$, $e\in [n]\setminus B$, and $f\in B$, let\[B+e:=B\cup\{e\} \quad\text{and}\quad B-f:=B\setminus\{f\}.\]

For the purpose of this paper, we define a matroid using the basis exchange property.

\begin{defn}A \emph{matroid} $M$ on $[n]$ is a non-empty set system $([n], \cB)$ such that for all $B,B'\in \cB$ and $e\in B\setminus B'$, there is an element $f\in B'\setminus B$ such that $B-e+f \in \cB$. An element in $\cB$ is called a \emph{basis}. 
\end{defn}

We often identify a collection $\cB$ of subsets of $[n]$ with the set of characteristic vectors $\bfe_B := \sum_{i\in B} \bfe_i$ with $B\in \cB$. The convex hull of these vectors is denoted by\[ \pP_\cB := \mathrm{conv}\{\bfe_B\colon B\in \cB\}.\]

\begin{defn}
    The \emph{basis polytope} $\pP_M$ of a matroid $M=([n], \cB)$ is the convex hull of the characteristic vectors of its bases, i.e., \[\pP_M :=\pP_\cB.\]
\end{defn}

Gelfand, Goresky, MacPherson, and Serganova~\cite{GGMS1987} characterized the basis polytopes of matroids by their edges. Recall that a \emph{0/1-polytope} is the convex hull of points with coordinate values 0 or 1. Note that these points are the vertices of the 0/1-polytope.

\begin{thm}[\cite{GGMS1987}]\label{thm:GGMS}
    Let $\pP$ be a 0/1-polytope. Then $\pP$ is the basis polytope of a matroid if and only if every edge is a translate of a vector of the form $\bfe_i-\bfe_j$ for some $i\ne j$. 
\end{thm}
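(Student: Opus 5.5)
We prove the two directions of the GGMS theorem separately. Throughout, $\pP=\pP_\cB$ for a collection $\cB$ of subsets of $[n]$, and we use that the vertices of a 0/1-polytope are exactly the points $\bfe_B$ with $B\in\cB$.

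\emph{Only if.} Let $M=([n],\cB)$ be a matroid and let $[\bfe_B,\bfe_{B'}]$ be an edge of $\pP_M$. Choose a linear functional $w$ whose maximizing face on $\pP_M$ is exactly this edge, so $B$ and $B'$ are the only maximizers of $w$ among the bases.

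Suppose $|B\symdiff B'|>2$; we derive a contradiction. All bases have the same size, so pick $e\in B\setminus B'$. Basis exchange gives $f\in B'\setminus B$ with $B-e+f\in\cB$. Applying exchange in the other direction to $e$ gives $f'\in B'\setminus B$ with $B'-f'+e\in\cB$.

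The cleanest route is the symmetric exchange property: for $e\in B\setminus B'$ there is $f\in B'\setminus B$ such that both $B-e+f$ and $B'-f+e$ are bases. I would derive it from the given axiom by a standard argument, or sidestep it as follows. Take $e$ maximizing $w_e$ over $B\symdiff B'$, and say $e\in B\setminus B'$. Exchange gives $f\in B'\setminus B$ with $B-e+f\in\cB$. Optimality of $B$ gives $w_f\le w_e$. Now apply exchange from $B'$ toward $B$ at $f$: this gives $g\in B\setminus B'$ with $B'-f+g\in\cB$, and optimality of $B'$ gives $w_g\le w_f$. The two new bases satisfy
\[
\bfe_{B-e+f}+\bfe_{B'-f+e}=\bfe_B+\bfe_{B'}
\]
only when $g=e$, which is where symmetric exchange is needed. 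Given symmetric exchange, both new points have $w$-value at most the maximum, and their sum equals $\bfe_B+\bfe_{B'}$, so both are maximizers. They differ from $\bfe_B$ and $\bfe_{B'}$ because $|B\symdiff B'|>2$. This contradicts the assumption that the face has only two vertices. Hence $\bfe_{B'}-\bfe_B=\bfe_i-\bfe_j$.

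\emph{If.} Suppose every edge of $\pP$ is parallel to some $\bfe_i-\bfe_j$. All edges preserve the coordinate sum and the graph of $\pP$ is connected, so all members of $\cB$ have the same size. To check exchange, take $B,B'\in\cB$ and $e\in B\setminus B'$. Consider the cone at the vertex $\bfe_B$. The vector $\bfe_{B'}-\bfe_B$ is a nonnegative combination of the edge directions at $\bfe_B$, each of the form $\bfe_{B-j+i}-\bfe_B=\bfe_i-\bfe_j$ with $j\in B$ and $i\notin B$. The $e$-coordinate of $\bfe_{B'}-\bfe_B$ is $-1$, so some edge direction in the combination has $j=e$. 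Its $i$-coordinate receives positive weight, and we need $i\in B'\setminus B$.

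This last point is the main obstacle. I would restrict to the face of $\pP$ minimizing $\sum_{k\notin B\cup B'}x_k-\sum_{k\in B\cap B'}x_k$. That face contains $\bfe_B$ and $\bfe_{B'}$, and on it every vertex $\bfe_C$ satisfies $B\cap B'\subseteq C\subseteq B\cup B'$. Running the cone argument inside this face forces $i\in (B\cup B')\setminus B=B'\setminus B$. So $B-e+i$ is an adjacent vertex, hence lies in $\cB$, which is exactly the exchange axiom.
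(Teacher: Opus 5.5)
Your proof is correct once you grant the symmetric exchange property, and in the ``only if'' direction it takes a different route from the paper.

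\textbf{``Only if''.} The paper does not treat matroids directly. It proves the delta-matroid version and reads off the matroid case from the absence of edge directions $\bfe_i$ and $\bfe_i+\bfe_j$. Its argument uses only the one-sided exchange axiom. It builds an auxiliary digraph recording single exchanges at $A$ and at $B$, extracts a directed cycle, and sums around it. This writes a point of the open segment between $\bfe_A$ and $\bfe_B$ as a convex combination of other vertices.

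You instead invoke symmetric exchange to get $\bfe_{B-e+f}+\bfe_{B'-f+e}=\bfe_B+\bfe_{B'}$. That is the classical short proof. However, symmetric exchange (Brualdi) is a genuine theorem, not a restatement of the axiom: its usual proof goes through fundamental circuits, cocircuits and their orthogonality. So it should be cited, not waved through as ``a standard argument''.

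Your abandoned sidestep can be completed without it, and completing it recovers exactly the paper's idea. Keep alternating the axiom at $B$ and at $B'$, producing $e_1,f_1,e_2,f_2,\dots$ with $B-e_k+f_k\in\cB$ and $B'-f_k+e_{k+1}\in\cB$, until some $e_q=e_p$ with $p<q$. Then
\[
\sum_{k=p}^{q-1}\bigl(\bfe_{B-e_k+f_k}+\bfe_{B'-f_k+e_{k+1}}\bigr)=(q-p)\,(\bfe_B+\bfe_{B'}).
\]
So the midpoint of $[\bfe_B,\bfe_{B'}]$ is an average of $2(q-p)$ vertices distinct from $\bfe_B$ and $\bfe_{B'}$ (because $|B\symdiff B'|>2$), and no functional $w$ is needed. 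What the paper gains from this cycle device is self-containedness from the one-sided axiom, and an argument that extends to delta-matroids, which is why it proves the more general statement first.

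One small correction: your line producing $f'$ with $B'-f'+e\in\cB$ is not an instance of the axiom. The axiom lets you prescribe the element removed, not the one added. Since that line is unused, just drop it.

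\textbf{``If''.} Your cone-at-a-vertex argument is the same as the paper's. The face restriction you use to force $i\in B'\setminus B$ is valid but unnecessary. Every edge direction at $\bfe_B$ is $\bfe_i-\bfe_j$ with $i\notin B$ and $j\in B$. So for $t\notin B\cup B'$, coordinate $t$ of the combination receives only nonnegative contributions, while $(\bfe_{B'}-\bfe_B)_t=0$. This already forces zero weight on every direction with $i\notin B'$. The paper gets the same effect by reflecting so that $\bfe_A=\mathbf{0}$, which makes all edge vectors at the base vertex nonnegative.
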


We will also need an analogous theorem for delta-matroids, which is in another paper of Gelfand and Serganova. Since it is inconvenient for the readers to find the proofs in different papers, we give a proof for these results in Section~\ref{sec: GGMS for delta-matroids}.

The following lemma on the $2$-faces of a basis polytope is well known and follows immediately from Theorem~\ref{thm:GGMS}. 
We include the proof for the sake of completeness.

\begin{lem}\label{lem: 2-face} A $2$-face of the basis polytope of a matroid is either a triangle or a square; see Figure~\ref{fig:2-face of matroid polytope}.
\end{lem}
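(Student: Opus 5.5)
We will derive Lemma~\ref{lem: 2-face} from Theorem~\ref{thm:GGMS} using only the fact that a $2$-face $\pQ$ of $\pP_M$ is a convex polygon whose vertices are $0/1$-vectors. By Theorem~\ref{thm:GGMS}, each edge of $\pQ$ is parallel to a vector $\bfe_i-\bfe_j$ with $i\ne j$. Since $\pQ$ is a face of a $0/1$-polytope, each of its edges joins two $0/1$-vertices $\bfe_B,\bfe_{B'}$ with $\bfe_{B'}-\bfe_B=\pm(\bfe_i-\bfe_j)$. So each edge is a single exchange $B'=B-j+i$, and it has Euclidean length $\sqrt2$.

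The plan is to bound the number of vertices of $\pQ$. Traverse the boundary of $\pQ$ as a closed polygon with edge vectors $\bfe_{i_1}-\bfe_{j_1},\dots,\bfe_{i_k}-\bfe_{j_k}$, oriented consistently, summing to zero. All edges lie in a $2$-dimensional affine plane, and convexity forces consecutive edge directions to turn monotonically. In particular, no two edges of a convex polygon are parallel in the same direction, so the $k$ directions are pairwise distinct. Moreover, opposite directions can occur at most once each.

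The key step, and the main obstacle, is to show that the edge directions span at most a $2$-dimensional space while drawn from the root set $\{\bfe_i-\bfe_j\}$. This forces them to come either from an $A_2$ subsystem on three indices or from an $A_1\times A_1$ subsystem on four indices. The argument goes as follows. Take two non-parallel edge directions $\bfe_a-\bfe_b$ and $\bfe_c-\bfe_d$. They span the plane of $\pQ$.

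\emph{Case 1: the two directions share an index.} Then, after relabeling, they lie in $\mathrm{span}\{\bfe_a-\bfe_b,\bfe_b-\bfe_c\}$. The only roots in this span are $\pm(\bfe_a-\bfe_b)$, $\pm(\bfe_b-\bfe_c)$ and $\pm(\bfe_a-\bfe_c)$, which is a direct check on supports.

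\emph{Case 2: the two directions have disjoint supports $\{a,b\}$ and $\{c,d\}$.} Then the only roots in the span are $\pm(\bfe_a-\bfe_b)$ and $\pm(\bfe_c-\bfe_d)$. This holds because any root has support of size $2$, while a combination $\lambda(\bfe_a-\bfe_b)+\mu(\bfe_c-\bfe_d)$ with $\lambda\mu\neq0$ has support of size $4$.

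With this, Case~2 gives a polygon with at most four edge directions, all of length $\sqrt2$, and with opposite sides parallel. So $\pQ$ is a parallelogram with perpendicular sides, since $(\bfe_a-\bfe_b)\cdot(\bfe_c-\bfe_d)=0$, and hence a square.

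In Case~1 there are at most six directions, and the polygon lives in a plane where all coordinates outside $\{a,b,c\}$ are constant. The vertices are $0/1$-vectors whose restriction to $\{a,b,c\}$ has fixed coordinate sum. So there are at most three such points, namely the points of weight $1$ or the points of weight $2$ in $\{0,1\}^3$. Hence $\pQ$ is a triangle.

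Combining the two cases proves the lemma. The delicate point is making rigorous that every edge direction of $\pQ$ lies in the span of any two non-parallel ones. This holds because $\pQ$ is $2$-dimensional, and the only remaining work is the support computation above.
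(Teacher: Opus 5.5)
Your argument is correct. Its core is the same as the paper's. Both rely on the fact that the only vectors $\bfe_\star-\bfe_\bullet$ in the span of two non-parallel ones $\bfe_a-\bfe_b$, $\bfe_c-\bfe_d$ are the six roots on three indices when the supports overlap, and only $\pm(\bfe_a-\bfe_b)$, $\pm(\bfe_c-\bfe_d)$ when they are disjoint. You finish differently, though.

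The paper argues locally. It fixes an edge $AB$ and uses the $0/1$ condition at $B$ to restrict $\overrightarrow{BC}$ to three shapes. It then walks around the polygon, using convexity and the $0/1$ condition at each step to force the next edge vector until the polygon closes. This walk is the ``recipe'' it later invokes for the delta-matroid faces in Lemma~\ref{lem: 2-face delta}.

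You instead pin down the affine plane of the face once and for all. In the overlapping case, the coordinates outside $\{a,b,c\}$ are fixed and the coordinate sum $s$ on $\{a,b,c\}$ is fixed. So there are at most $\binom{3}{s}\le 3$ candidate $0/1$ points, which yields the triangle with no convexity bookkeeping. This is cleaner than the paper's step-by-step exclusion of $\overrightarrow{CD}=\bfe_j-\bfe_i$.

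In the disjoint case, you should state explicitly that three distinct vectors from $\{\pm(\bfe_a-\bfe_b),\pm(\bfe_c-\bfe_d)\}$ never sum to zero. That is what excludes a triangle; your phrase ``opposite sides parallel'' already presupposes four sides. Alternatively, your counting handles this case too. The plane then contains at most four $0/1$ points, forming a square. A triangle on three of them would have a side $\pm(\bfe_a-\bfe_b)\pm(\bfe_c-\bfe_d)$, which Theorem~\ref{thm:GGMS} forbids.
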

\begin{proof}
Let $F$ be a $2$-face, which is a polygon. Pick any edge of $F$ and denote its end-points by $A$ and $B$. Then by Theorem~\ref{thm:GGMS}, the vector $\overrightarrow{AB}$ equals $\bfe_i-\bfe_j$ for some $i\ne j$. Denote by $BC$ the other edge of the polygon incident to $B$. Then the vector $\overrightarrow{BC}$ can only be $\bfe_k-\bfe_l$, $\bfe_j-\bfe_k$, or $\bfe_k-\bfe_i$ for some $k,l\notin\{i,j\}$ because the matroid polytope is a 0/1-polytope. Since all the other edges are also of the form $\bfe_\star-\bfe_\bullet$ and are in the space generated by $\overrightarrow{AB}$ and $\overrightarrow{BC}$, it is easy to see that the face $F$ must be one of the three cases in Figure~\ref{fig:2-face of matroid polytope}. A more precise argument is as follows.  
\begin{enumerate}
    \item When $\overrightarrow{BC}=\bfe_k-\bfe_l$, the next vector, denoted by $\overrightarrow{CD}$ (possibly $D=A$), is a linear combination of $\bfe_i-\bfe_j$ and $\bfe_k-\bfe_l$. Because $\overrightarrow{CD}$ is of the form $\bfe_\star-\bfe_\bullet$, we have $\overrightarrow{CD}=\pm(\bfe_i-\bfe_j)$ or $\pm(\bfe_k-\bfe_l)$. Since $AB,BC,CD$ are three consecutive edges of a $2$-dimensional convex polytope, the only choice is $\overrightarrow{CD}=\bfe_j-\bfe_i$. The vector next to $\overrightarrow{CD}$, by the same argument, must be $\pm(\bfe_i-\bfe_j)$ or $\pm(\bfe_k-\bfe_l)$. The only choice is $\bfe_l-\bfe_k$, and hence the face $F$ is the square $ABCD$. 
    
    \item When $\overrightarrow{BC}=\bfe_j-\bfe_k$, the next vector, denoted by $\overrightarrow{CD}$, is a linear combination of $\bfe_i-\bfe_j$ and $\bfe_j-\bfe_k$. Because $\overrightarrow{CD}$ is of the form $\bfe_\star-\bfe_\bullet$, we have $\overrightarrow{CD}=\pm(\bfe_i-\bfe_j)$, $\pm(\bfe_j-\bfe_k)$, or $\pm(\bfe_i-\bfe_k)$. Since $AB,BC,CD$ are three consecutive edges of a $2$-dimensional convex polytope, the choices are $\overrightarrow{CD}=\bfe_j-\bfe_i$ or $\bfe_k-\bfe_i$. Since it is a 0/1-polytope, $\overrightarrow{CD}\neq\bfe_j-\bfe_i$. Therefore, $D=A$ and the face $F$ is the triangle $ABC$.
    \item The case $\overrightarrow{AC}=\bfe_k-\bfe_i$ is similar to the previous one. \qedhere
\end{enumerate}
\end{proof}

\begin{figure}
    \centering
    \begin{tikzpicture}
       \begin{scope}
    \coordinate (a) at (-90:1.0);
    \coordinate (b) at (180:1.0);
    \coordinate (c) at (90:1.0);
    \coordinate (d) at (0:1.0);

    \fill[gray!30] (a) -- (b) -- (c) -- (d) -- cycle;

    \node at (a) [circle,fill,inner sep=1.5pt] {};
    \node at (b) [circle,fill,inner sep=1.5pt] {};
    \node at (c) [circle,fill,inner sep=1.5pt] {};
    \node at (d) [circle,fill,inner sep=1.5pt] {};

\node[above=0.15cm] at (a) {A};
\node[right=0.15cm] at (b) {B};

    \draw
        (a) node[below] {$jl$}
        (b) node[left] {$il$}
        (c) node[above] {$ik$}
        (d) node[right] {$jk$};

    \draw[midarrow, line width=1pt] (a) --node[left,yshift=-1mm]{$\bfe_i- \bfe_j$} (b);
    \draw[midarrow, line width=1pt] (b) --node[left,yshift=1mm]{$\bfe_k- \bfe_l$} (c);
    \draw[midarrow, line width=1pt] (a) --node[right,yshift=-1mm]{$\bfe_k- \bfe_l$} (d);
    \draw[midarrow, line width=1pt] (d) --node[right,yshift=1mm]{$\bfe_i- \bfe_j$} (c);
\end{scope}

        \begin{scope}[xshift=4.8cm, yshift=0.3cm]
            \coordinate (a) at (-90:1.0);
            \coordinate (b) at (150:1.0);
            \coordinate (c) at (30:1.0);

            \fill[gray!30] (a) -- (b) -- (c) -- cycle;

            \node at (a) [circle,fill,inner sep=1.5pt] {};
            \node at (b) [circle,fill,inner sep=1.5pt] {};
            \node at (c) [circle,fill,inner sep=1.5pt] {};

            \draw
                (a) node[below] {$jk$}
                (b) node[left] {$ik$}
                (c) node[right] {$ij$};

            \draw[midarrow, line width=1pt] (a) --node[left,yshift=-1mm]{$\bfe_i- \bfe_j$} (b);
            \draw[midarrow, line width=1pt] (b) --node[above]{$\bfe_j- \bfe_k$} (c);
            \draw[midarrow, line width=1pt] (a) --node[right,yshift=-1mm]{$\bfe_i- \bfe_k$} (c);
        \end{scope}

        \begin{scope}[xshift=9.3cm, yshift=0.3cm]
            \coordinate (a) at (-90:1.0);
            \coordinate (b) at (150:1.0);
            \coordinate (c) at (30:1.0);

            \fill[gray!30] (a) -- (b) -- (c) -- cycle;

            \node at (a) [circle,fill,inner sep=1.5pt] {};
            \node at (b) [circle,fill,inner sep=1.5pt] {};
            \node at (c) [circle,fill,inner sep=1.5pt] {};

            \draw
                (a) node[below] {$j$}
                (b) node[left] {$i$}
                (c) node[right] {$k$};

            \draw[midarrow, line width=1pt] (a) --node[left,yshift=-1mm]{$\bfe_i- \bfe_j$} (b);
            \draw[midarrow, line width=1pt] (b) --node[above]{$\bfe_k- \bfe_i$} (c);
            \draw[midarrow, line width=1pt] (a) --node[right,yshift=-1mm]{$\bfe_k- \bfe_j$} (c);
        \end{scope}
    \end{tikzpicture}
    \caption{Possible 2-faces of the basis polytope of a matroid.}
    \label{fig:2-face of matroid polytope}
\end{figure}

Now we relate the basis polytope to the basis graph. 
\begin{defn}
The \emph{basis graph} $G_M$ of a matroid $M=([n], \cB)$ is a graph on $\cB$ such that two vertices $B$ and $B'$ are
adjacent if and only if $|B\setminus B'|=1$. 
\end{defn}

The following lemma is also well known. 
\begin{lem}\label{lem: 1-skeleton}
    For a matroid $M$, the graph of its basis polytope $\pP_M$ is its basis graph $G_M$. 
\end{lem}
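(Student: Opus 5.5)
We must show two things: every edge of $\pP_M$ joins two bases $B,B'$ with $|B\setminus B'|=1$, and conversely every such pair spans an edge of $\pP_M$.

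\emph{Edges of $\pP_M$ are edges of $G_M$.} The vertices of $\pP_M$ are exactly the points $\bfe_B$ with $B\in\cB$. Indeed, each $\bfe_B$ is a vertex of the cube $[0,1]^n$, so it cannot be a convex combination of other 0/1-points. Hence both end-points of an edge of $\pP_M$ are of the form $\bfe_B,\bfe_{B'}$ with $B,B'\in\cB$. By Theorem~\ref{thm:GGMS} (the ``only if'' direction), the edge vector is $\bfe_{B'}-\bfe_B=\bfe_i-\bfe_j$ for some $i\ne j$. This forces $B'=B-j+i$, so $|B\setminus B'|=1$ and $BB'$ is an edge of $G_M$.

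\emph{Edges of $G_M$ are edges of $\pP_M$.} Suppose $B'=B-j+i$ with $B,B'\in\cB$. I would exhibit a linear functional whose maximizers on $\pP_M$ are exactly the segment $[\bfe_B,\bfe_{B'}]$. Take
\[
\ell(x)=\sum_{k\in B\cap B'}x_k-\sum_{k\notin B\cup B'}x_k .
\]
This functional gives weight $1$ to the elements of $B\cap B'$, weight $0$ to $i$ and $j$, and weight $-1$ to everything else. Its maximum over $\{0,1\}^n$ is $|B\cap B'|$. This maximum is attained exactly on the 0/1-vectors that contain $B\cap B'$, avoid $[n]\setminus(B\cup B')$, and have arbitrary coordinates at $i$ and $j$.

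All bases have the same size; this follows from the exchange axiom by a short induction on $|B\setminus B''|$. So the only bases among these maximizers are $B$ and $B'$ themselves, since $B\cap B'$ together with neither or both of $i,j$ has the wrong cardinality. The face of $\pP_M$ maximizing $\ell$ is the convex hull of the maximizing vertices. It is therefore exactly the segment $[\bfe_B,\bfe_{B'}]$, which is an edge.

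I expect no real obstacle; the work is done by Theorem~\ref{thm:GGMS}. The only points needing care are that every basis point is a vertex of $\pP_M$ and that all bases have equal size. The equal-size fact can alternatively be avoided: any maximizer of $\ell$ has the form $\bfe_S$ with $B\cap B'\subseteq S\subseteq B\cup B'$, and it suffices to check that the face spanned by such $S$ contains no basis other than $B$ and $B'$.
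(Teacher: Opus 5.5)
Your proposal is correct and follows the paper's proof: the forward direction uses the ``only if'' part of Theorem~\ref{thm:GGMS}, and the converse uses the same linear functional (weight $1$ on $B\cap B'$, $0$ on $i,j$, $-1$ elsewhere). You also make explicit two points the paper leaves implicit: that every $\bfe_B$ is a vertex, and that equal cardinality of bases excludes $B\cap B'$ and $B\cup B'$ from the maximizing face. Your closing remark about avoiding the equal-size fact, however, only restates what must be checked and is not an alternative argument.
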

\begin{proof}
    By definition their vertex sets coincide. 
    Let $AB$ be an edge of $P_M$. Then by Theorem~\ref{thm:GGMS}, $AB$ is a translate of $\bfe_i-\bfe_j$ for some $i\ne j$, which implies $B=A-i+j$ or $A=B-i+j$. Hence $A$ and $B$ are adjacent in $G_M$. Conversely, let $AB$ be an edge of $G_M$. Then by definition, $B=A-i+j$ for some $i\ne j$. Let $f$ be a linear functional such that $f(\bfe_i) = f(\bfe_j) =0$, $f(\bfe_k)=1$ for any $k \in A\cap B$, and $f(\bfe_l)=-1$ for any $l\in [n]\setminus (A\cup B)$. Then $f$ is maximized exactly at the line segment $AB$, which implies that $AB$ is an edge of $\pP_M$.
\end{proof}

By Lemma~\ref{lem: 1-skeleton}, Lemma~\ref{lem: 2-face}, and Proposition~\ref{prop: main}, we immediately obtain Maurer's homotopy theorem:

\begin{thm}[Theorem~\ref{thm: Maurer homotopy theorem}]
Any two walks with the same end-points in the basis graph $G_M$ of a matroid $M$ are combinatorially homotopic.  
\end{thm}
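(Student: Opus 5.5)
The plan is to apply Proposition~\ref{prop: main} directly to the basis polytope $\pP_M$; essentially all ingredients are already in place, so the proof is an assembly of the preceding lemmas.

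\emph{Step one: identify the graph.} By Lemma~\ref{lem: 1-skeleton}, the graph of the polytope $\pP_M$ coincides with the basis graph $G_M$: the vertex sets agree, and two bases span an edge of $\pP_M$ exactly when they differ by a single exchange. Consequently, walks in $G_M$ are precisely walks in the graph of $\pP_M$, and the notion of combinatorial homotopy is the same in both settings. We also note that $G_M$ is connected and simple, since it is the graph of a polytope; this is the standing assumption of Section~\ref{sec: Maurer homotopy}.

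\emph{Step two: control the $2$-faces.} By Lemma~\ref{lem: 2-face}, every $2$-face of $\pP_M$ is a triangle or a square, and a square is in particular a quadrilateral. This is exactly the hypothesis of Proposition~\ref{prop: main}. Applying that proposition to $\pP=\pP_M$ then gives that any two walks with the same end-points in $G_M$ are combinatorially homotopic.

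\emph{Where the work lies.} No step here is a genuine obstacle. The substantive input is Theorem~\ref{thm:GGMS}, which underlies both Lemma~\ref{lem: 1-skeleton} and Lemma~\ref{lem: 2-face}, together with the topological argument of Proposition~\ref{prop: main}. That argument combines two facts: the $2$-skeleton of a polytope is simply connected, and attaching the extra $2$-cells along $3$- and $4$-cycles that are not $2$-faces only adds relations to the fundamental group. The one point to check is the degenerate case where $\pP_M$ has dimension at most $1$, so that there are no $2$-faces. In that case $G_M$ is a single vertex or a single edge. Proposition~\ref{prop: main} still applies, because its proof only uses that $\pP^2=\pP$ is contractible. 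One can also verify this case directly: every closed walk reduces to a trivial walk by deletions.
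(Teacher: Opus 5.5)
Your proposal is correct and matches the paper's proof exactly: the paper also obtains the theorem immediately by combining Lemma~\ref{lem: 1-skeleton}, Lemma~\ref{lem: 2-face}, and Proposition~\ref{prop: main}. Your extra remark on the low-dimensional case is harmless but not needed, since the proof of Proposition~\ref{prop: main} already covers it.
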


We remark that we only used the ``only if '' direction of Theorem~\ref{thm:GGMS}.

\section{Homotopy theorem for delta-matroids}\label{sec: delta-matroid}
Some notation has been introduced in Section~\ref{sec: matroid}. 
For sets $A$ and $B$, we denote by $A\triangle B$ the symmetric difference of $A$ and $B$, i.e., $A\triangle B=(A\setminus B)\cup (B\setminus A)$.

\begin{defn}
A \emph{delta-matroid} $M$ on $[n]$ is a non-empty set system $([n], \cB)$ such that for all $B,B'\in \cB$ and $e\in B \triangle B'$, there is an element $f\in B\triangle B'$ (possibly, $e=f$) such that $B\triangle\{e,f\} \in \cB$.
A delta-matroid is \emph{even} if all bases have the same parity.
\end{defn}

Evidently, matroids are exactly those delta-matroids whose bases have the same cardinality.

The basis polytope of a delta-matroid generalizes the basis polytope of a matroid. 

\begin{defn}
    The \emph{basis polytope} $\pP_M$ of a delta-matroid $M=([n], \cB)$ is the convex hull of the characteristic vectors of its bases, i.e., \[\pP_M :=\pP_\cB.\]
\end{defn}

The Gelfand--Serganova theorem~\cite{GS1987,BGW1997} for (even) delta-matroids characterizes the basis polytopes of delta-matroids and even delta-matroids.

\begin{thm}[\cite{GS1987,BGW1997}]\label{thm:GGMS for delta-matroids}
    Let $\pP$ be a 0/1-polytope. 
    \begin{enumerate}[label=\rm(\roman*)]
        \item\label{item:GGMS1} The polytope $\pP$ is the basis polytope of a delta-matroid if and only if every edge is a translate of $\bfe_i$ for some $i$, or $\bfe_i-\bfe_j$ or $\bfe_i+\bfe_j$ for some $i\ne j$.
        \item\label{item:GGMS2} The polytope $\pP$ is the basis polytope of an even delta-matroid if and only if every edge is a translate of $\bfe_i-\bfe_j$ or $\bfe_i+\bfe_j$ for some $i\ne j$.
    \end{enumerate}
\end{thm}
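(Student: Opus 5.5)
We prove both parts through a \emph{twisting} reduction. For $A\subseteq[n]$, the reflection $\sigma_A$ sends $x_i\mapsto 1-x_i$ for $i\in A$ and fixes the other coordinates. It maps $\bfe_X$ to $\bfe_{X\triangle A}$ and is an affine automorphism of the cube. So it maps the 0/1-polytope $\pP_\cB$ onto $\pP_{\cB\triangle A}$, where $\cB\triangle A=\{B\triangle A: B\in\cB\}$, and it preserves faces.

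Its linear part changes the signs of the coordinates in $A$. Hence it permutes the set of directions $\{\pm\bfe_i,\ \pm\bfe_i\pm\bfe_j\}$, and it preserves the subset $\{\pm\bfe_i\pm\bfe_j\}$. The exchange axiom involves only symmetric differences, so $\cB$ is a delta-matroid if and only if $\cB\triangle A$ is one. Parity of all bases is preserved as well. Consequently, whenever we look at one particular vertex $\bfe_B$, we may assume $B=\emptyset$.

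\emph{``If'' direction of (i).} Assume every edge direction is $\pm\bfe_i$ or $\pm\bfe_i\pm\bfe_j$. Let $B,B'$ be vertices and $e\in B\triangle B'$; twist so that $B=\emptyset$, hence $e\in B'$.
\begin{enumerate}[label=(\alph*)]
    \item Every edge leaving the vertex $0$ stays inside the cube, so its direction has nonnegative coordinates. Thus it is $\bfe_i$ or $\bfe_i+\bfe_j$.
    \item Since $\pP$ lies in the tangent cone at the vertex $0$, the vector $\bfe_{B'}$ is a nonnegative combination of these edge directions.
    \item All generators have nonnegative coordinates and $\bfe_{B'}$ vanishes outside $B'$. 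So only generators supported in $B'$ occur with positive coefficient.
    \item Because the $e$-coordinate of $\bfe_{B'}$ is $1$, some occurring generator has positive $e$-coordinate. It is $\bfe_e$ or $\bfe_e+\bfe_f$ with $f\in B'=B\triangle B'$.
\end{enumerate}
Its other endpoint is a vertex, namely $\bfe_{\{e\}}$ or $\bfe_{\{e,f\}}$, which is $\bfe_{B\triangle\{e,f\}}$ with $f$ possibly equal to $e$. This is exactly the exchange axiom.

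\emph{``Only if'' direction of (i).} The key lemma is that every face of a delta-matroid polytope is again a delta-matroid polytope. Equivalently, for a linear functional $c$, the set $\cB_c$ of $c$-maximizing bases satisfies the exchange axiom.

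Granting the lemma, let $[\bfe_A,\bfe_B]$ be an edge. Then $\{A,B\}$ is a delta-matroid; twist so that $A=\emptyset$. For $e\in B$, the exchange axiom applied to $\emptyset$ and $B$ gives $\{e,f\}\in\{\emptyset,B\}$ with $f\in B$. Since $\{e,f\}\neq\emptyset$, this forces $B=\{e,f\}$, so $|B|\le 2$. The edge direction is therefore $\bfe_i$ or $\bfe_i+\bfe_j$, and undoing the twist gives a vector of the stated form.

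I expect the face lemma to be the main obstacle. I would prove it by induction on $|B\triangle B'|$ for $B,B'\in\cB_c$ and $e\in B\triangle B'$.
\begin{enumerate}[label=(\alph*)]
    \item Twist so that $c_i\ge 0$ for all $i$; then $c$ measures weight on added elements.
    \item Apply exchange in $\cB$ from $B$ toward $B'$ at $e$, obtaining $B_1=B\triangle\{e,f\}$. Also apply exchange from $B'$ toward $B$ at the same or a paired element, obtaining $B'_1$.
    \item The two exchanges change $c$ by opposite amounts plus nonnegative slack. Maximality of $c(B)$ and $c(B')$ then forces both new sets to be maximizers when the chosen pair is compatible.
    \item If the first choice of $f$ fails, move to $B'_1$, which is closer to $B$, and use the induction hypothesis.
\end{enumerate}
If this bookkeeping gets stuck, the fallback is the standard description $\pP_M=\{x:\ x(S)-x(T)\le \rho(S,T)\}$, with $\rho$ bisubmodular. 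Faces of such polytopes are of the same type, obtained by restricting $\rho$ along a chain, and bisubmodular 0/1-polytopes are exactly delta-matroid polytopes.

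\emph{Part (ii).} For ``only if'', apply (i): an edge in direction $\bfe_i$ joins two bases whose sizes differ by one, contradicting evenness. So only $\bfe_i\pm\bfe_j$ remain. For ``if'', part (i) shows $M$ is a delta-matroid. Since every edge direction has even coordinate sum and the graph of $\pP$ is connected, all bases have the same parity.
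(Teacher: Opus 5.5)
Your ``if'' direction of (i) is the paper's argument; your support observation in step (c), which guarantees $f\in B\triangle B'$, even makes explicit a detail the paper glosses over. Part (ii) and the deduction of $|A\triangle B|\le 2$ from the face lemma are also fine.

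The gap is the face lemma, on which the entire ``only if'' direction rests. The lemma is true (it follows from the theorem), but your sketch does not prove it. Step (c) works only if the exchange at $e$ from $B$ and the exchange from $B'$ use the \emph{same} pair $\{e,f\}$, since only then do the two changes of $c$ cancel. The delta-matroid axiom provides no such common partner, and maximality does not force one. For example, $\cB=\{\emptyset,\{1\},\{2\},\{3\},\{1,2,3\}\}$ is a delta-matroid, and every basis maximizes $c=0$. Yet for $B=\emptyset$, $B'=\{1,2,3\}$, $e=1$, no $f$ puts both $B\triangle\{1,f\}$ and $B'\triangle\{1,f\}$ in $\cB$. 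Step (d) fails as well: $B'_1$ is not known to lie in $\cB_c$, so the induction hypothesis, which concerns pairs of maximizers, cannot be applied to it. The bisubmodular fallback cites rather than proves: the description $\pP_M=\{x: x(S)-x(T)\le\rho(S,T)\}$, closure of bisubmodular polyhedra under faces, and the exchange property of $0/1$ bisubmodular polytopes together already imply the statement. So it outsources exactly the step that needs an argument.

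The missing idea is that you only need the edge case of the face lemma: $[\bfe_A,\bfe_B]$ is not an edge when $|A\triangle B|\ge 3$. This can be proved directly, with no common-partner exchange. The paper twists to $A=\emptyset$, $B=[k]$ with $k\ge 3$. It then builds a bipartite digraph on $[k]\sqcup[k]'$, with an arc $i\to j'$ whenever $\{i,j\}\in\cB$ and an arc $i'\to j$ whenever $B\triangle\{i,j\}\in\cB$ ($i=j$ allowed). The exchange axiom, applied separately at $\emptyset$ and at $B$, gives every vertex out-degree at least one, so there is a directed cycle $i_0,i_1',i_2,\dots,i_{2m}=i_0$. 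Summing around it gives $(\sum_{j\text{ odd}}\sigma_j)\bfe_B=\sum_{j\text{ even}}\sigma_j\bfv_j+\sum_{j\text{ odd}}\sigma_j(\bfe_B-\bfv_j)$. This writes a point in the relative interior of the segment as a convex combination of vertices other than $\mathbf{0}$ and $\bfe_B$; the condition $k\ge 3$ is what makes these vertices differ from the endpoints. The cycle is the balancing device your step (c) was looking for: exchanges from the two endpoints are not matched in pairs, but their total contributions agree around the cycle. Such a cycle need not pass through a prescribed $e$, so this argument gives the edge statement rather than your full face lemma, but that is all the theorem requires.
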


Theorem~\ref{thm:GGMS for delta-matroids}\ref{item:GGMS1} follows from Theorems~3.3.3 and 4.1.4 in {\cite{BGW1997}}, and \ref{item:GGMS2} follows from Theorems~3.3.3 and 4.2.4 in {\cite{BGW1997}}. For the sake of completeness, we give a direct proof of Theorem~\ref{thm:GGMS for delta-matroids} in Section~\ref{sec: GGMS for delta-matroids}.

The following lemma lists all types of the $2$-faces of the basis polytopes of delta-matroids.

\begin{lem}\label{lem: 2-face delta}
    A $2$-face of the basis polytope of a delta-matroid is either a triangle or a rectangle; see Figure~\ref{fig:2-face of delta-matroid polytope}.
\end{lem}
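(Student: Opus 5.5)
The plan is to imitate the proof of Lemma~\ref{lem: 2-face}, now using Theorem~\ref{thm:GGMS for delta-matroids}\ref{item:GGMS1} in place of Theorem~\ref{thm:GGMS}. Let $F$ be a $2$-face of $\pP_M$. Then $F$ is a polygon whose vertices are 0/1-vectors, and every edge direction lies in the finite set
\[
D=\{\pm\bfe_i\}\cup\{\pm(\bfe_i-\bfe_j)\}\cup\{\pm(\bfe_i+\bfe_j)\}.
\]
Let $AB$ and $BC$ be two consecutive edges, with $u=\overrightarrow{AB}$ and $w=\overrightarrow{BC}$. All edges of $F$ lie in the plane spanned by $u$ and $w$. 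So each edge vector is an element of $D\cap\mathrm{span}(u,w)$.

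The first step is to split into two cases according to the supports of $u$ and $w$.

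\emph{Disjoint supports.} Suppose $\mathrm{supp}(u)\cap\mathrm{supp}(w)=\emptyset$. A vector of $D$ lying in $\mathrm{span}(u,w)$ has support contained in $\mathrm{supp}(u)\cup\mathrm{supp}(w)$. Such a vector must be proportional to $u$ or to $w$; a mixed combination $au+bw$ with $a,b\neq 0$ would have support of size at least $2$ with mismatched structure. The one exception is $u=\pm\bfe_i$ and $w=\pm\bfe_k$, where $\pm\bfe_i\pm\bfe_k\in D$. Even then, the 0/1 condition forbids a third edge direction that is not parallel to $u$ or $w$. So we make the same convexity argument as in case (1) of Lemma~\ref{lem: 2-face}: the next edges are forced to be $-u$ and then $-w$. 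Hence $F$ is a parallelogram with sides $u$ and $w$. Since $u$ and $w$ have disjoint supports, $u\cdot w=0$, so $F$ is a rectangle. The possible shapes include squares of side $1$ (for $\bfe_i,\bfe_k$), rectangles of sides $1\times\sqrt2$, and squares of side $\sqrt2$.

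\emph{Overlapping supports.} Suppose the supports overlap. Since $B+w$ is a 0/1-vector, the coordinates shared by $u$ and $w$ must have opposite signs in $u$ and $w$. So $u+w$ "cancels" on the overlap. A short case check over the types of $u$ and $w$ shows that $u+w$ is itself in $D$, or that $u+w\in 2D$ is impossible by 0/1-ness. Examples:
\begin{itemize}
\item $u=\bfe_i$, $w=\bfe_j-\bfe_i$ gives $u+w=\bfe_j$;
\item $u=\bfe_i+\bfe_j$, $w=-\bfe_j$ gives $u+w=\bfe_i$;
\item $u=\bfe_i+\bfe_j$, $w=-\bfe_i-\bfe_k$ gives $u+w=\bfe_j-\bfe_k$.
\end{itemize}
In each subcase we then argue, as in case (2) of Lemma~\ref{lem: 2-face}, that the third edge from $C$ must be $-(u+w)$. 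The other candidate directions in $D\cap\mathrm{span}(u,w)$ either violate convexity (the turning must be monotone) or leave the 0/1 cube. Hence $D=A$ and $F$ is a triangle.

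The main obstacle is organizing this enumeration cleanly. There are three edge types, with signs, and overlaps of size one or two. I would first reduce by symmetry: the coordinate flips $x_i\mapsto 1-x_i$ preserve the class of 0/1-polytopes and the set $D$ of edge types. They also preserve triangles and rectangles, and they turn $\bfe_i+\bfe_j$ into $\bfe_i-\bfe_j$. Using them, we may assume $B=0$, so that $-u\ge 0$ and $w\ge 0$ coordinatewise. This makes the sign constraints transparent. The list of $D\cap\mathrm{span}(u,w)$ is then small, and convexity (consecutive edge vectors turn in a fixed orientation and sum to zero around $F$) settles each case.
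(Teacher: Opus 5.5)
There is a false step in your disjoint-supports case, in exactly the exceptional subcase you single out. For $u=\pm\bfe_i$ and $w=\pm\bfe_k$ you claim the $0/1$ condition forbids a third edge direction not parallel to $u$ or $w$, so the next edges are forced to be $-u$ and then $-w$. This is wrong: $-(u+w)=\mp\bfe_i\mp\bfe_k$ lies in $D$ and can close the polygon. For instance, $\{\emptyset,\{1\},\{1,2\}\}$ is a delta-matroid whose basis polytope is itself the $2$-dimensional triangle with vertices $\mathbf{0}$, $\bfe_1$, $\bfe_1+\bfe_2$. Taking $\overrightarrow{AB}=\bfe_1$ and $\overrightarrow{BC}=\bfe_2$ gives consecutive edges with disjoint supports, yet $F$ is a triangle (type~II in Figure~\ref{fig:2-face of delta-matroid polytope}), not a parallelogram. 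So your intermediate claim ``disjoint supports implies rectangle'' is false. Because $AB,BC$ was an arbitrary pair of consecutive edges, your argument wrongly concludes ``parallelogram'' for every type~II face started at its two axis-parallel edges.

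The lemma survives, and the repair is short. In this subcase every vertex of $F$ lies in the affine plane $B+\mathrm{span}(\bfe_i,\bfe_k)$, whose only $0/1$-points are the four vertices of a unit square, so $F$ is that square (type~I) or a triangle (type~II).

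Your overlapping-supports case reaches the right conclusion, though the ``convexity or leave the cube'' justification is thin. A cleaner check is to normalize $B=\mathbf{0}$; then $-u,w\in\{\bfe_s,\ \bfe_s+\bfe_t\}$. One sees that $B+\mathrm{span}(u,w)$ contains at most one $0/1$-point besides $A,B,C$, and that point would make $AB$ or $BC$ a diagonal rather than an edge.

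With these fixes, organizing by support overlap of consecutive edges is a valid and somewhat tidier packaging of the same local case analysis. The paper instead branches on whether $F$ has an edge parallel to some $\bfe_i$, then on edges parallel to some $\bfe_i+\bfe_j$, and otherwise falls back on Lemma~\ref{lem: 2-face}.
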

\begin{proof}
The proof is just a case analysis, where we apply Theorem~\ref{thm:GGMS for delta-matroids} several times. Let $F$ be a $2$-face, which is a polygon. 
\begin{enumerate}[label=(\roman*)]
    \item Assume that there exists an edge of $F$ parallel to $\bfe_i$. We denote this edge by $AB$ and assume $\overrightarrow{AB}=\bfe_i$. Denote by $BC$ the other edge of the polygon incident to $B$. Then the vector $\overrightarrow{BC}$ can be $\pm \bfe_j$, $-\bfe_i\pm\bfe_j$, $\bfe_j\pm\bfe_k$, or $-\bfe_j\pm\bfe_k$, where $i,j,k$ are distinct. 
    \begin{enumerate}
        \item When $\overrightarrow{BC}=\bfe_j$, the polygon can be closed using one more edge $CA$, which is type~II with $(u_i,u_j)=(1,1)$, or using two more edges, which is type~I. (A more detailed and rigorous proof can be written using the recipe in the proof of Lemma~\ref{lem: 2-face}. We omit such proofs here for simplicity.) 
        \item When $\overrightarrow{BC}=-\bfe_j$, it is similar to the previous case. We either have type~I or type~II with $(u_i,u_j)=(1,-1)$. Note that the two type~I cases are identical, but the two type~II cases are not. 
        \item When $\overrightarrow{BC}=-\bfe_i+\bfe_j$, the polygon can only be closed using $BC$, which is type~II with $(u_i,u_j)=(-1,1)$.
        \item When $\overrightarrow{BC}=-\bfe_i-\bfe_j$, the polygon can only be closed using $BC$, which is type~II with $(u_i,u_j)=(-1,-1)$. By swapping $i$ and $j$, we get type~II with $(u_i,u_j)=(1,1)$.
        \item When $\overrightarrow{BC}=\bfe_j\pm\bfe_k$, the polygon can only be a rectangle, which is type~III. 
        \item When $\overrightarrow{BC}=-\bfe_j\pm\bfe_k$, we still obtain type~III. 
     \end{enumerate}
    \item Assume that there is no edge of $F$ parallel to $\bfe_i$ and there is no edge of $F$ parallel to $\bfe_i+\bfe_j$. Then this is the matroid case which we discussed in Figure~\ref{fig:2-face of matroid polytope}. We obtain type~IV with $(u_j,u_l)=(-1,-1)$ and type~V with $(u_i,u)=\pm(1,-1)$. 
    \item Assume that there is no edge of $F$ parallel to $\bfe_i$ and there exists an edge of $F$ parallel to $\bfe_i+\bfe_j$. We denote this edge by $AB$ and assume $\overrightarrow{AB}=\bfe_i+\bfe_j$. Denote by $BC$ the other edge of the polygon incident to $B$. Then the vector $\overrightarrow{BC}$ can be $\pm(\bfe_k+\bfe_l)$, $\bfe_k-\bfe_l$, or $-\bfe_i\pm\bfe_k$, where $i,j,k,l$ are distinct. \begin{enumerate}
        \item When $\overrightarrow{BC}=\pm(\bfe_k+\bfe_l)$, we obtain type~IV with $(u_j,u_l)=(1,1)$.
        \item When $\overrightarrow{BC}=\bfe_k-\bfe_l$, we obtain type~IV with $(u_j,u_l)=(1,-1)$. 
        \item When $\overrightarrow{BC}=-\bfe_i+\bfe_k$, by swapping $i$ and $j$, we obtain type~V with $(u_i,u)=(1,1)$. 
        \item When $\overrightarrow{BC}=-\bfe_i-\bfe_k$, we obtain type~V with $(u_i,u)=(-1,-1)$. 
        \qedhere
    \end{enumerate}     
\end{enumerate}
\end{proof}

\begin{figure}
    \centering
\begin{tikzpicture}

\begin{scope}
        \coordinate (a) at (0,0);
        \coordinate (b) at (0,1.4);
        \coordinate (c) at (1.4,1.4);
        \coordinate (d) at (1.4,0);

        \fill[gray!30] (a) -- (b) -- (c) -- (d) -- cycle;

        \node at (a) [circle,fill,inner sep=1.5pt] {};
        \node at (b) [circle,fill,inner sep=1.5pt] {};
        \node at (c) [circle,fill,inner sep=1.5pt] {};
        \node at (d) [circle,fill,inner sep=1.5pt] {};

        \draw[midarrow, line width=1pt] (a) --node[left]{$\bfe_i$} (b);
        \draw[midarrow, line width=1pt] (b) --node[above]{$\bfe_j$} (c);
        \draw[midarrow, line width=1pt] (a) --node[below]{$\bfe_j$} (d);
        \draw[midarrow, line width=1pt] (d) --node[right]{$\bfe_i$} (c);

\node[above right] at (a) {A};
\node[below right] at (b) {B};

    \end{scope}

\begin{scope}[xshift=4.9cm]
        \coordinate (a) at (0,0);
        \coordinate (b) at (0,1.4);
        \coordinate (c) at (1.4,1.4);

        \fill[gray!30] (a) -- (b) -- (c) -- cycle;

        \node at (a) [circle,fill,inner sep=1.5pt] {};
        \node at (b) [circle,fill,inner sep=1.5pt] {};
        \node at (c) [circle,fill,inner sep=1.5pt] {};

        \draw[midarrow, line width=1pt] (a) --node[left]{$u_i\bfe_i$} (b);
        \draw[midarrow, line width=1pt] (b) --node[above]{$u_j \bfe_j$} (c);
        \draw[midarrow, line width=1pt] (a) --node[right,yshift=-1mm]{$u_i\bfe_i+u_j\bfe_j$} (c);
    \end{scope}

\begin{scope}[xshift=9.8cm]
        \coordinate (a) at (0,0);
        \coordinate (b) at (0,1.4);
        \coordinate (c) at (2.8,1.4);
        \coordinate (d) at (2.8,0);

        \fill[gray!30] (a) -- (b) -- (c) -- (d) -- cycle;

        \node at (a) [circle,fill,inner sep=1.5pt] {};
        \node at (b) [circle,fill,inner sep=1.5pt] {};
        \node at (c) [circle,fill,inner sep=1.5pt] {};
        \node at (d) [circle,fill,inner sep=1.5pt] {};

        \draw[midarrow, line width=1pt] (a) --node[left]{$\bfe_i$} (b);
        \draw[midarrow, line width=1pt] (b) --node[above]{$\bfe_j +u_k \bfe_k$} (c);
        \draw[midarrow, line width=1pt] (a) --node[below]{$\bfe_j +u_k \bfe_k$} (d);
        \draw[midarrow, line width=1pt] (d) --node[right]{$\bfe_i$} (c);
    \end{scope}

\node at (0.7, -0.8) {type I};
    \node at (5.6, -0.8) {type II};
    \node at (11.2, -0.8) {type III};

\begin{scope}[yshift=-2.8cm]
        
\begin{scope}[xshift=3.15cm]
            \coordinate (a) at (-90:1.0);
            \coordinate (b) at (180:1.0);
            \coordinate (c) at (90:1.0);
            \coordinate (d) at (0:1.0);

            \fill[gray!30] (a) -- (b) -- (c) -- (d) -- cycle;

            \node at (a) [circle,fill,inner sep=1.5pt] {};
            \node at (b) [circle,fill,inner sep=1.5pt] {};
            \node at (c) [circle,fill,inner sep=1.5pt] {};
            \node at (d) [circle,fill,inner sep=1.5pt] {};

            \draw[midarrow, line width=1pt] (a) --node[left,yshift=-1mm]{$\bfe_i+u_j\bfe_j$} (b);
            \draw[midarrow, line width=1pt] (b) --node[left,yshift=1mm]{$\bfe_k+ u_l\bfe_l$} (c);
            \draw[midarrow, line width=1pt] (a) --node[right,yshift=-1mm]{$\bfe_k+u_l\bfe_l$} (d);
            \draw[midarrow, line width=1pt] (d) --node[right,yshift=1mm]{$\bfe_i+u_j\bfe_j$} (c);
        \end{scope}

\begin{scope}[xshift=8.4cm, yshift=0.1cm]
            \coordinate (a) at (-90:1.0);
            \coordinate (b) at (150:1.0);
            \coordinate (c) at (30:1.0);

            \fill[gray!30] (a) -- (b) -- (c) -- cycle;

            \node at (a) [circle,fill,inner sep=1.5pt] {};
            \node at (b) [circle,fill,inner sep=1.5pt] {};
            \node at (c) [circle,fill,inner sep=1.5pt] {};

            \draw[midarrow, line width=1pt] (a) --node[left,yshift=-1mm]{$u_i\bfe_i+u \bfe_j$} (b);
            \draw[midarrow, line width=1pt] (b) --node[above]{$u\bfe_k- u\bfe_j$} (c);
            \draw[midarrow, line width=1pt] (a) --node[right,yshift=-1mm]{$u_i\bfe_i+ u\bfe_k$} (c);
        \end{scope}

\node at (3.15, -1.6) {type IV};
        \node at (8.4, -1.6) {type V};
        
    \end{scope}
\end{tikzpicture}
\caption{Possible 2-faces of the basis polytope of a delta-matroid.}

\vspace{0.2cm}

\begin{minipage}{0.75\textwidth} 
        \small 
        \begin{enumerate}
            \item In type II, the choices $(u_i,u_j)=(1,\pm 1), (-1,1)$ give three subtypes.
            \item In type III, the choices $u_k\in\{\pm 1\}$ give two subtypes.
            \item In type IV, the choices $(u_j,u_l)=(1,\pm 1), (-1,-1)$ give three subtypes.
            \item In type V, the choices $u_i,u\in\{\pm1\}$ give four subtypes.
        \end{enumerate}
    \end{minipage}
\label{fig:2-face of delta-matroid polytope}
\end{figure}

For even delta-matroids, there is no edge parallel to $\bfe_i$. Thus the $2$-faces are types IV and V in Figure~\ref{fig:2-face of delta-matroid polytope}. 

\begin{cor}
A $2$-face of the basis polytope of an even delta-matroid is either a triangle or a square; see types IV and V in Figure~\ref{fig:2-face of delta-matroid polytope}. 
\end{cor}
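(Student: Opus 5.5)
This corollary follows from Lemma~\ref{lem: 2-face delta} together with part~\ref{item:GGMS2} of Theorem~\ref{thm:GGMS for delta-matroids}. Let $M$ be an even delta-matroid and $F$ a $2$-face of $\pP_M$. Since $M$ is in particular a delta-matroid, Lemma~\ref{lem: 2-face delta} applies, and $F$ is one of types I--V in Figure~\ref{fig:2-face of delta-matroid polytope}. By Theorem~\ref{thm:GGMS for delta-matroids}\ref{item:GGMS2}, every edge of $\pP_M$, and hence every edge of $F$, is a translate of $\bfe_i-\bfe_j$ or $\bfe_i+\bfe_j$ with $i\neq j$. No edge is a translate of a single $\bfe_i$.

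Types I, II and III each contain an edge parallel to some $\bfe_i$: the edge $AB$ with direction $\bfe_i$ or $u_i\bfe_i$. So these types cannot occur, and $F$ must be of type IV or type V.

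It remains to check the shapes. A type V face is a triangle by inspection. A type IV face is a parallelogram whose two edge directions are $\bfe_i+u_j\bfe_j$ and $\bfe_k+u_l\bfe_l$, with $i,j,k,l$ distinct. These vectors have disjoint supports, so they are orthogonal, and each has length $\sqrt2$. Hence $F$ is a square.

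The only point needing care is that the case analysis of Lemma~\ref{lem: 2-face delta} really lists all $2$-faces. In particular, we need that when no edge is parallel to some $\bfe_i$, we land only in cases (ii) and (iii) of that proof. Those cases produce exactly types IV and V, so this is immediate.

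For the later application to Proposition~\ref{prop: main}, "square" can be weakened to "quadrilateral" anyway.
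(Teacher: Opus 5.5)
Your proposal is correct and follows the paper's argument. By Theorem~\ref{thm:GGMS for delta-matroids}\ref{item:GGMS2} (or directly because all bases have the same parity), no edge is parallel to some $\bfe_i$, which rules out types I--III of Lemma~\ref{lem: 2-face delta} and leaves types IV and V; your extra check that a type IV face is a genuine square (edge vectors with disjoint supports, hence orthogonal, each of length $\sqrt2$) is a harmless detail the paper leaves to the figure.
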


\begin{rmk}
For each type in Figure~\ref{fig:2-face of delta-matroid polytope} except type I, the subtypes can be considered as the same up to \emph{twisting} of delta-matroid. This is because the twisting at an element $i$ replaces $\bfe_i$ with $-\bfe_i$ in the basis polytope. 
\end{rmk}

Defining the basis graph of a delta-matroid $M$, a notion that is, to the best of our knowledge, new to the literature, requires some care. A natural combinatorial definition would be Definition~\ref{def: basis graph of delta-matroid}. However, the graph in this definition contains certain edges that are not edges of the basis polytope. These are exactly the diagonals of type~I $2$-faces in Figure~\ref{fig:2-face of delta-matroid polytope}. Thus, we will also define the reduced basis graph, which is a combinatorial characterization of deleting these extra edges. 

\begin{defn}\label{def: basis graph of delta-matroid}
    The \emph{basis graph} $G_M$ of a delta-matroid is a graph on $\cB$ such that two vertices $B$ and $B'$ are adjacent if and only if $|B\triangle B'| = 1$ or $2$. 
\end{defn}

\begin{defn}
    The \emph{reduced basis graph} $G'_M$ of a delta-matroid is a graph on $\cB$ such that two vertices $B$ and $B'$ are adjacent if and only if
    \begin{itemize}
        \item $|B\triangle B'| = 1$ or
        \item $|B\triangle B'| = 2$ and there does not exist a pair of distinct vertices $B_1$ and $B_2$ such that
        \begin{equation}\label{eq: reduce}
            |B\triangle B_1|=|B\triangle B_2| =|B'\triangle B_1|=|B'\triangle B_2|= 1.
        \end{equation}
    \end{itemize}
\end{defn}

\begin{lem}\label{lem: 1-skeleton delta}
    For a delta-matroid $M$, the graph of its basis polytope $\pP_M$ is its reduced basis graph $G'_M$. 
\end{lem}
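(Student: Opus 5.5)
Both graphs have vertex set $\cB$, so we only need to compare edges, and we do it in two directions, in the style of Lemma~\ref{lem: 1-skeleton}.

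\emph{Polytope edges are edges of $G'_M$.} Let $AB$ be an edge of $\pP_M$. By Theorem~\ref{thm:GGMS for delta-matroids}\ref{item:GGMS1}, $\overrightarrow{AB}$ is $\pm\bfe_i$, $\pm(\bfe_i-\bfe_j)$ or $\pm(\bfe_i+\bfe_j)$. Hence $|A\triangle B|\in\{1,2\}$, so $AB$ is an edge of $G_M$. If $|A\triangle B|=1$, it is an edge of $G'_M$ and we are done. Suppose $|A\triangle B|=2$ and, for contradiction, distinct $B_1,B_2$ satisfy \eqref{eq: reduce}. Write $A\triangle B=\{i,j\}$. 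Then $B_1$ and $B_2$ are obtained from $A$ by toggling $i$ alone and $j$ alone, so $A,B_1,B,B_2$ are the vertices of a parallelogram with $\bfe_A+\bfe_B=\bfe_{B_1}+\bfe_{B_2}$. The midpoint of $AB$ is then also the midpoint of $B_1B_2$. This contradicts the fact that $AB$ is a face: any linear functional maximized exactly on $AB$ would be maximized at $B_1$ and $B_2$ as well, but they are not on the segment $AB$.

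\emph{Edges of $G'_M$ are polytope edges.} For $|A\triangle B|=1$, say $B=A+i$, take $f$ with $f(\bfe_i)=0$, $f(\bfe_k)=1$ for $k\in A$, and $f(\bfe_l)=-1$ for $l\notin A\cup B$. This $f$ is maximized over the cube exactly on the segment $AB$, hence on $\pP_M$ exactly there.

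For $|A\triangle B|=2$ with $A\triangle B=\{i,j\}$, use the same $f$ (value $0$ on $\bfe_i,\bfe_j$, value $1$ on $A\cap B$, value $-1$ off $A\cup B$). Its maximizers in $\pP_M$ form the face $\pP_{\cB'}$, where $\cB'$ consists of the bases agreeing with $A$ outside $\{i,j\}$. So $\cB'$ is a subset of the four-element square $\{A\cap B,\ (A\cap B)+i,\ (A\cap B)+j,\ A\cup B\}$ and contains $A$ and $B$. The non-edge condition says $\cB'$ does not contain both of the other two square corners.

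\begin{itemize}
\item If $\cB'=\{A,B\}$, the face is the segment $AB$ and we are done.
\item Otherwise $\cB'=\{A,B,C\}$ for exactly one extra corner $C$. The three points span a triangle; it is nondegenerate because three corners of a square are never collinear.
\end{itemize}

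In the triangle case, $AB$ is a side, hence an edge of that $2$-face. Since a face of a face is a face, $AB$ is an edge of $\pP_M$.

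The main obstacle is this last step: one must check that the maximizing face of $f$ is exactly $\pP_{\cB'}$ and that the triangle case still yields an edge. For the latter, observe that in that case $AB$ has direction $\bfe_i\pm\bfe_j$ and is the hypotenuse of a right isosceles triangle, so it is indeed a side.
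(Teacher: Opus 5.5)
Your proof is correct and follows the paper's argument; in particular, your converse direction is exactly the paper's: the functional vanishing on $\bfe_i,\bfe_j$, equal to $1$ on $A\cap B$ and $-1$ off $A\cup B$, whose maximizing face is a segment or a triangle. The only difference is in the first direction, where you use the midpoint identity $\bfe_A+\bfe_B=\bfe_{B_1}+\bfe_{B_2}$ to force any functional maximized on $AB$ to be maximized at $B_1,B_2$ too, whereas the paper exhibits a functional cutting out the square $BB_1B'B_2$ and notes that $BB'$ is its diagonal; both arguments work.
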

\begin{proof}
    By definition their vertex sets coincide. 
    Let $BB'$ be an edge of $\pP_M$. Then by Theorem~\ref{thm:GGMS for delta-matroids}, $BB'$ is a translate of $\bfe_i$ for some $i$, or $\bfe_i-\bfe_j$ or $\bfe_i+\bfe_j$ for some $i\ne j$. Hence $B\symdiff B'=\{i\}$ or $\{i,j\}$. To conclude that $B$ and $B'$ are adjacent in $G'_M$, we need to show that when $B\symdiff B'=\{i,j\}$, there does not exist a pair of vertices $B_1$ and $B_2$ satisfying \eqref{eq: reduce}. For the sake of contradiction, assume that such $B_1$ and $B_2$ exist. Then the collection of the four bases $B,B',B_1,B_2$ is $\{K,K+i,K+j,K+i+j\}$ for some $K\subseteq [n]-i-j$. Let $f$ be a linear functional such that $f(\bfe_i) = f(\bfe_j) =0$, $f(\bfe_k)=1$ for any $k \in K$, and $f(\bfe_l)=-1$ for any $l\in [n]\setminus(K+i+j)$. Then $f$ is maximized exactly at the square $BB_1B'B_2$. Because $BB'$ is a diagonal of the square $BB_1B'B_2$, the line segment $BB'$ cannot be an edge of $\pP_M$, which gives a contradiction.
    
    Conversely, let $BB'$ be an edge of $G_M'$. There are two cases. \begin{itemize}
        \item When $|B\symdiff B'|=\{i\}$, let $f$ be a linear functional such that $f(\bfe_i)=0$, $f(\bfe_k)=1$ for any $k \in B\cap B'$, and $f(\bfe_l)=-1$ for any $l\in [n]\setminus (B\cup B')$. Then $f$ is maximized exactly at the line segment $BB'$, which implies that $BB'$ is an edge of $\pP_M$. 
        \item When $|B\symdiff B'|=\{i,j\}$, let $f$ be a linear functional such that $f(\bfe_i)=f(\bfe_j)=0$, $f(\bfe_k)=1$ for any $k \in B\cap B'$, and $f(\bfe_l)=-1$ for any $l\in [n]\setminus (B\cup B')$. Then $f$ is maximized at a face $F$ containing the line segment $BB'$. The other vertices of $\pP_M$ that the face $F$ can possibly contain are $B\symdiff\{i\}$ and $B\symdiff\{j\}$, but the condition \eqref{eq: reduce} implies that the face $F$ cannot contain both of them. Hence $F$ is either the line segment $BB'$ or a triangle containing $BB'$ as a side. In either case, $BB'$ is an edge of $\pP_M$. 
        \qedhere
    \end{itemize}
\end{proof}

Now we are ready to prove the analogue of Maurer's homotopy theorem for delta-matroids.
\begin{thm}\label{thm: 2 homotopy theorems for delta-matroids}
Let $M$ be a delta-matroid. 
\begin{enumerate}[label=\rm(\roman*)]
    \item In the reduced basis graph $G'_M$ of $M$, any two walks with the same end-points are combinatorially homotopic.
    \item (Theorem~\ref{thm: homotopy theorem for delta matroids}) In the basis graph $G_M$ of $M$, any two walks with the same end-points are combinatorially homotopic.
\end{enumerate}
\end{thm}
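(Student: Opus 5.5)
Part (i) follows from the polyhedral machinery already set up. By Lemma~\ref{lem: 1-skeleton delta}, the graph of the basis polytope $\pP_M$ is the reduced basis graph $G'_M$. By Lemma~\ref{lem: 2-face delta}, every $2$-face of $\pP_M$ is a triangle or a rectangle, in particular a triangle or a quadrilateral. Proposition~\ref{prop: main} therefore applies directly to $\pP_M$ and gives (i). The only point to check is that Proposition~\ref{prop: main} really only needs quadrilaterals, not squares. This is clear from its proof: each $2$-face boundary is a $3$- or $4$-cycle of $G'_M$, so $X^{34}_{G'_M}$ is obtained from $\pP_M^2$ by attaching $2$-cells.

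For (ii), I would deduce the statement from (i) by comparing the two graphs. The graph $G'_M$ is a spanning subgraph of $G_M$. The extra edges are exactly the pairs $BB'$ with $|B\triangle B'|=2$ for which there exist distinct $B_1,B_2$ satisfying \eqref{eq: reduce}. For such an edge, write the four bases as $\{K,K+i,K+j,K+i+j\}$, as in the proof of Lemma~\ref{lem: 1-skeleton delta}. Then $B_1$ and $B_2$ are each adjacent to both $B$ and $B'$ via edges with $|\cdot\triangle\cdot|=1$, and these edges lie in $G'_M$.

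Let $W_1,W_2$ be walks in $G_M$ with the same end-points. First, replace every step $BB'$ along an extra edge by the detour $BB_1B'$. The walks $B\,B'$ and $B\,B_1B'$ differ by a shortcut, since $B\ne B'$ and $BB_1,B_1B'$ are edges of $G_M$. Hence each $W_k$ is combinatorially homotopic in $G_M$ to a walk $\widetilde W_k$ lying entirely in $G'_M$. By (i), $\widetilde W_1$ and $\widetilde W_2$ are related by a sequence of elementary transformations in $G'_M$.

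The main subtlety is that an elementary transformation in $G'_M$ need not be one in $G_M$. Deletions and shortcuts remain valid, because every edge of $G'_M$ is an edge of $G_M$. A $2$-switch in $G'_M$ requires $v_{k-1},v_{k+1}$ to be at distance $2$ in $G'_M$, but in $G_M$ they might be adjacent. Remark~\ref{rmk: fake 2-switch} covers exactly this case: if $v_{k-1}v_{k+1}$ is an edge of $G_M$, the two walks are still related by two shortcuts. If they are not adjacent in $G_M$, then their distance in $G_M$ is $2$, since the common neighbour $v_k$ persists, and the $2$-switch is legitimate in $G_M$. Thus $\widetilde W_1$ and $\widetilde W_2$ are combinatorially homotopic in $G_M$, and so are $W_1$ and $W_2$.

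Alternatively, one could argue topologically. $X^{34}_{G_M}$ is obtained from $X^{34}_{G'_M}$ by adding edges, each spanning a $3$-cycle $BB_1B'$ that gets a $2$-cell, together with further $2$-cells. Adding an edge and a triangle filling it does not change $\pi_1$, and further $2$-cells only add relations. The combinatorial argument above is more direct, and I would present that one.
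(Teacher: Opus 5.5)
Your proposal is correct and follows the paper's proof. Part (i) is exactly the combination of Lemma~\ref{lem: 1-skeleton delta}, Lemma~\ref{lem: 2-face delta}, and Proposition~\ref{prop: main}; for part (ii) the paper packages your argument as Lemma~\ref{lem: chord} (adding chords to $4$-cycles preserves the homotopy property), proved the same way, by replacing each extra edge via a shortcut and using Remark~\ref{rmk: fake 2-switch} for $2$-switches whose end-vertices become adjacent in the larger graph.
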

\begin{proof}
The first result follows from Lemma~\ref{lem: 1-skeleton delta}, Lemma~\ref{lem: 2-face delta}, and Proposition~\ref{prop: main}. 
The second result follows from Lemma~\ref{lem: chord} below. 
\end{proof}

\begin{lem}\label{lem: chord}
Let $G'$ be a connected simple graph and $G$ be a graph obtained from $G'$ by adding some chords to the $4$-cycles of $G'$ (i.e., if $v_1v_2v_3v_4v_1$ is a cycle in $G'$, then we may add edges $v_1v_3$ or $v_2v_4$). 
If any two walks with the same end-points are combinatorially homotopic in $G'$, then the same holds in $G$.   
\end{lem}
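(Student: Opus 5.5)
We work directly with elementary transformations, reducing every walk in $G$ to a walk in $G'$. By Proposition~\ref{prop: topology Maurer}, it suffices to fix a vertex $v$ and show that every $v$-based closed walk in $G$ is combinatorially null-homotopic in $G$. The basic observation is that $G'$ is a subgraph of $G$ on the same vertex set. Hence every elementary transformation in $G'$ is also one in $G$, with one exception: a $2$-switch in $G'$ requires $v_{k-1}$ and $v_{k+1}$ to be at distance $2$ in $G'$, but they may be adjacent in $G$ after chords are added. In that case Remark~\ref{rmk: fake 2-switch} still gives a combinatorial homotopy in $G$, via two shortcuts. Deletions and shortcuts transfer verbatim. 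So two walks in $G'$ that are combinatorially homotopic in $G'$ remain combinatorially homotopic in $G$.

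Next we remove the chords from any walk in $G$. Let $W$ be a $v$-based closed walk in $G$, and suppose it uses a chord $v_1v_3$ of a $4$-cycle $v_1v_2v_3v_4v_1$ of $G'$, i.e.\ $W$ contains consecutive vertices $v_1v_3$. We replace this step by $v_1v_2v_3$; the new walk differs from $W$ by a shortcut, read in reverse. This is legal because $v_1\ne v_3$: they are distinct vertices of a $4$-cycle. By the definition of $k$-cycle, the four vertices are distinct, and $v_1v_2$, $v_2v_3$ are edges of $G'$. Repeating this for each occurrence of a chord, we obtain a closed walk $W'$ lying entirely in $G'$ that is combinatorially homotopic to $W$ in $G$. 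Each replacement strictly decreases the number of chord-steps, so the process terminates.

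By hypothesis, $W'$ is combinatorially null-homotopic in $G'$. By the first paragraph, it is then combinatorially null-homotopic in $G$, and hence so is $W$. This verifies condition (ii) of Proposition~\ref{prop: topology Maurer} for $G$, and the conclusion follows. The same argument applies directly to two walks with common end-points, without passing through closed walks.

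The only step needing care is the transfer of $2$-switches from $G'$ to $G$, where the distance-$2$ condition can fail in $G$. Remark~\ref{rmk: fake 2-switch} handles exactly this case. One should also check that the $G$ produced in Theorem~\ref{thm: 2 homotopy theorems for delta-matroids} is connected and simple, but this is inherited from $G'$.
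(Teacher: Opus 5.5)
Your proof is correct and follows the paper's argument: you replace each chord step by the corresponding two-edge path in $G'$ (a reverse shortcut), invoke the hypothesis in $G'$, and use Remark~\ref{rmk: fake 2-switch} for $2$-switches whose end-points become adjacent in $G$. The detour through closed walks and Proposition~\ref{prop: topology Maurer} is harmless but unnecessary, as you note yourself.
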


\begin{proof}
Let $W_1$ and $W_2$ be two walks of $G$ with the same end-points. If both walks are in the subgraph $G'$, then they are combinatorially homotopic, where a $2$-switch might be replaced by two shortcuts (cf. Remark~\ref{rmk: fake 2-switch}). If $W_1$ or $W_2$ contains an edge $e$ not in $G'$, then the walk is combinatorially homotopic to a new walk where $e$ is replaced by edges in $G'$ via a shortcut. Hence each of $W_1$ and $W_2$ is combinatorially homotopic to a walk in $G'$, and the problem is reduced to the previous case.      
\end{proof}

Our homotopy theorem certainly holds for even delta-matroids because they are a subclass of delta-matroids, which recovers Wenzel's result \cite[Theorem~1.12]{Wenzel1996}.

\section{Homotopy theorem for antisymmetric matroids}\label{antisymmetric matroid}

An antisymmetric matroid is a combinatorial structure introduced by the second author~\cite{Kim} to study the combinatorics of the Lagrangian Grassmannian, which has a close relation to a delta-matroid (Lemma~\ref{lem: antisymmetric and delta}).
To define antisymmetric matroids, we introduce some notation. Let \[ [n]^* := \{1^*, 2^*, \ldots, n^*\} \] be a disjoint copy of $[n]$. We let $(i^*)^* = i$ for $i\in [n]$. For a subset $S$ of $[n]\sqcup [n]^*$, let $S^* := \{i^* : i\in S\}$.
We call a $2$-element set $\{i,i^*\}$ a \emph{skew pair}. 
A subset $S$ is a \emph{transversal} if $|S\cap \{i,i^*\}| = 1$ for all $i\in [n]$. An \emph{almost-transversal} is an $n$-element subset $S$ such that $|S\cap \{i,i^*\}| = 1$ for all but two $i\in[n]$, i.e., there are exactly two elements $j,k\in [n]$ such that $|S\cap \{j,j^*\}| = 0$ and $|S\cap \{k,k^*\}| = 2$.
We denote by $\cT_n$ (resp. $\cA_n$) the set of transversals (resp. almost-transversals) of $[n]\sqcup [n]^*$.

\begin{defn}
    An \emph{antisymmetric matroid} $M$ on $[n] \cup [n]^*$ is a non-empty set system $([n]\sqcup [n]^*, \cB)$ such that $\cB$ is a subset of $\cT_n \cup \cA_n$ satisfying the following axiom: for all $B,B'\in \cB$ and $e\in B\setminus B'$, if $B-e$ has no skew pair and $B'+e$ has exactly one skew pair, then there is an element $f\in B'\setminus B$ such that $B-e+f \in \cB$ and $B'+e-f \in \cB$.
    Each element of $\cB$ is called a \emph{basis} of $M$. 
\end{defn}

\begin{lem}[{\cite[Lem.~3.2]{Kim}}]
    \label{lem: 3-term Plucker for antisymmetric}
    Let $M = ([n]\cup[n]^*, \cB)$ be an antisymmetric matroid. For a transversal $T$ and two distinct skew pairs $p$ and $q$, none or at least two of 
    \[
        \{(T\cup p)\setminus q, (T\cup q)\setminus p\},
        \quad
        \{T, T\triangle (p\cup q)\},
        \quad
        \{T\triangle p, T\triangle q\}
    \]
    are subsets of $\cB$. 
\end{lem}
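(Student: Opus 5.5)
The lemma should follow directly from the antisymmetric matroid axiom applied to a suitable pair of bases. We split into three cases according to which of the three pairs is assumed to lie in $\cB$.

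\textbf{Setup.} Write $p=\{i,i^*\}$ and $q=\{j,j^*\}$. The six sets in the statement are unchanged as a whole (only the names of their elements change) if we replace $i$ by $i^*$ or $j$ by $j^*$, since $(i^*)^*=i$. So we may assume $i,j\in T$. The six sets then become
\[
(T\cup p)\setminus q=T-j+i^*,\quad (T\cup q)\setminus p=T-i+j^*,\quad T\triangle(p\cup q)=T-i-j+i^*+j^*,
\]
\[
T\triangle p=T-i+i^*,\quad T\triangle q=T-j+j^*.
\]
The remaining set is $T$ itself. We must show: if one of the three pairs is contained in $\cB$, then some other pair is as well. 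In each case we apply the axiom to a basis $B$, a basis $B'$ and an element $e$. We check that $B-e$ has no skew pair and that $B'+e$ has exactly one. Each of the two candidates $f\in B'\setminus B$ then puts a full second pair into $\cB$.

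\textbf{Case $\{T,T\triangle(p\cup q)\}\subseteq\cB$.} Take $B=T$, $B'=T-i-j+i^*+j^*$ and $e=i$.
\begin{itemize}
\item $B-e=T-i$ is contained in a transversal, so it has no skew pair.
\item $B'+e$ contains exactly the skew pair $p$.
\item $B'\setminus B=\{i^*,j^*\}$.
\end{itemize}
If $f=i^*$, the two new bases are $T-i+i^*=T\triangle p$ and $B'+i-i^*=T-j+j^*=T\triangle q$. If $f=j^*$, they are $T-i+j^*$ and $T-j+i^*$, which form the first pair.

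\textbf{Case $\{T\triangle p,T\triangle q\}\subseteq\cB$.} Take $B=T-i+i^*$, $B'=T-j+j^*$ and $e=i^*$.
\begin{itemize}
\item $B-e=T-i$ has no skew pair.
\item $B'+e$ contains exactly the skew pair $p$.
\item $B'\setminus B=\{i,j^*\}$.
\end{itemize}
If $f=i$, we obtain $T$ and $T-i-j+i^*+j^*$, which is the second pair. If $f=j^*$, we obtain $T-i+j^*$ and $T-j+i^*$, which is the first pair.

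\textbf{Case $\{(T\cup p)\setminus q,(T\cup q)\setminus p\}\subseteq\cB$.} Take $B=T-j+i^*$, $B'=T-i+j^*$ and $e=i^*$; note that $i^*\in B\setminus B'$.
\begin{itemize}
\item $B-e=T-j$ has no skew pair.
\item $B'+e=T-i+i^*+j^*$ contains exactly the skew pair $q$, since $j\in B'$.
\item $B'\setminus B=\{j,j^*\}$.
\end{itemize}
If $f=j$, we obtain $T$ and $T-i-j+i^*+j^*$. If $f=j^*$, we obtain $T-j+j^*$ and $T-i+i^*$.

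The main point requiring care is that each choice of $e$ really satisfies the skew-pair hypotheses of the axiom. In particular, in the third case both bases are almost-transversals, and $e$ must be chosen as $i^*$, the element that breaks the skew pair $p\subseteq B$. Everything else is direct bookkeeping of the set operations.
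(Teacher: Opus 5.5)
Your proof is correct. In each of the three cases your choice of $B$, $B'$ and $e$ meets the hypotheses of the exchange axiom: $B-e$ has no skew pair and $B'+e$ has exactly one. Your computation of $B'\setminus B$ is also right, and the two possible values of $f$ land in the other two pairs. Since "none or at least two" is equivalent to "if one pair lies in $\cB$, so does another," this proves the lemma.

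There is nothing in the paper to compare routes against: it does not prove this lemma and imports it from \cite[Lem.~3.2]{Kim}. Your argument is therefore a self-contained derivation from the axiom exactly as it is stated in this paper.

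One small quibble concerns your closing remark, not the proof. In the third case nothing forces $e=i^*$; the choice $e=i$ works equally well. Then $B-e=T-i-j+i^*$ has no skew pair, $B'+e=T+j^*$ has exactly the skew pair $q$, and $B'\setminus B=\{j,j^*\}$. The two values of $f$ yield the pairs $\{T\triangle p,\,T\triangle q\}$ and $\{T,\,T\triangle(p\cup q)\}$.

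Similarly, in your setup the relabeling $i\leftrightarrow i^*$ changes nothing at all, since the six sets depend only on $T$, $p$, $q$. You are simply naming the element of $p\cap T$ as $i$.
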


The following relation between antisymmetric matroids and delta-matroids easily follows from Lemma~\ref{lem: 3-term Plucker for antisymmetric}.

\begin{lem}[{\cite[Prop.~4.3]{Kim}}]
    \label{lem: antisymmetric and delta}
    Let $M$ be an antisymmetric matroid on $[n]\sqcup [n]^*$. Let \[ \Delta(\cB) := \{B\cap [n] \colon \text{$B$ is a transversal in $\cB$}\}. \] Then $\Delta(M) := ([n], \Delta(\cB))$ is a delta-matroid.
\end{lem}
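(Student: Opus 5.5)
We must verify the delta-matroid exchange axiom for $\Delta(\cB)$ directly from Lemma~\ref{lem: 3-term Plucker for antisymmetric}. The plan is an induction on $|X\triangle Y|$ that runs a local exchange at each step.

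\textbf{Dictionary and non-emptiness.} Identify a subset $X\subseteq[n]$ with the transversal $T_X := X\cup([n]\setminus X)^*$. Then $X\in\Delta(\cB)$ if and only if $T_X\in\cB$. For skew pairs $p=\{i,i^*\}$ and $q=\{j,j^*\}$ we have
\[
T_X\triangle p = T_{X\triangle\{i\}},\qquad T_X\triangle(p\cup q)=T_{X\triangle\{i,j\}}.
\]
The sets $(T\cup p)\setminus q$ and $(T\cup q)\setminus p$ are almost-transversals.

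We also need $\Delta(\cB)\neq\emptyset$, i.e.\ that $\cB$ contains a transversal. I will first try to extract this from the axiom. If $B\in\cB$ is an almost-transversal with double pair $\{k,k^*\}$ and missing pair $\{j,j^*\}$, apply the axiom with $B'=B$? That fails, since it requires $e\in B\setminus B'$. If no cleaner route appears, I will use the argument from \cite{Kim}: every almost-transversal basis forces a transversal basis. I expect this non-emptiness to be the fiddly point. It may require a different application of the axiom, with $B$ an almost-transversal and $e$ one element of its skew pair.

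\textbf{Exchange.} Let $X,Y\in\Delta(\cB)$ and $e\in X\triangle Y$. We must find $f\in X\triangle Y$ with $X\triangle\{e,f\}\in\Delta(\cB)$. If $X\triangle\{e\}\in\Delta(\cB)$, take $f=e$. Otherwise, induct on $d=|X\triangle Y|$.

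\emph{Base case $d=1$.} Then $Y=X\triangle\{e\}$ and there is nothing to prove.

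\emph{Case $d=2$.} Write $X\triangle Y=\{e,g\}$ and take $f=g$, which works since $X\triangle\{e,g\}=Y$.

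\emph{Case $d\ge 3$.} Pick $g\in X\triangle Y$ with $g\neq e$. Apply Lemma~\ref{lem: 3-term Plucker for antisymmetric} to $T=T_Y$ with $p=\{e,e^*\}$ and $q=\{g,g^*\}$. Here the choice of the auxiliary transversal matters. I would instead apply the axiom of antisymmetric matroids directly to $B=T_X$, $B'=T_Y$, with $e$ replaced by the appropriate literal $x\in T_X\setminus T_Y$ (namely $e$ or $e^*$).

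\textbf{Applying the axiom.} $T_X-x$ has no skew pair, and $T_Y+x$ has exactly the skew pair $\{e,e^*\}$, so the hypothesis of the axiom holds. It yields $y\in T_Y\setminus T_X$ with $T_X-x+y\in\cB$ and $T_Y+x-y\in\cB$. The element $y$ corresponds to some $f\in X\triangle Y$.

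If $f=e$, then $T_X-x+y=T_{X\triangle\{e\}}$, which is a transversal, and we are done.

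If $f\neq e$, then $T_X-x+y$ is an almost-transversal, namely $(T_X\cup q)\setminus p$ with $p=\{e,e^*\}$ and $q=\{f,f^*\}$. Now apply Lemma~\ref{lem: 3-term Plucker for antisymmetric} with $T=T_X$. The pair $\{T,T\triangle(p\cup q)\}$ is one of the three listed pairs. The pair $\{(T\cup p)\setminus q,(T\cup q)\setminus p\}$ is another, and we know $(T\cup q)\setminus p\in\cB$. Here $(T\cup p)\setminus q$ should be identified with $T_Y+x-y$, or with its analogue after a symmetric exchange.

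\textbf{Expected obstacle.} The main obstacle is this bookkeeping step: showing that the second almost-transversal $(T\cup p)\setminus q$ lies in $\cB$. If it does, the lemma forces either $\{T_X,T_{X\triangle\{e,f\}}\}\subseteq\cB$, which is the desired conclusion, or $\{T_{X\triangle\{e\}},T_{X\triangle\{f\}}\}\subseteq\cB$, which contradicts our assumption $X\triangle\{e\}\notin\Delta(\cB)$.

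If $T_Y+x-y$ is not literally $(T_X\cup p)\setminus q$, the fallback is to induct. Replace $Y$ by the transversal closer to $X$ obtained from $T_Y+x-y$ via the lemma applied at $T_Y$. This reduces $d$ while keeping $e\in X\triangle Y$, and we conclude by the induction hypothesis.
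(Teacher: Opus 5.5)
\textbf{There is a genuine gap, and it sits exactly at the step you flagged.}

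Your first step is the natural one, and it is presumably what the paper means when it says the lemma ``easily follows'' from Lemma~\ref{lem: 3-term Plucker for antisymmetric}. Apply the exchange axiom to $T_X$, $T_Y$ and the literal $x$ of $e$. If it returns $y=x^*$, you get $X\triangle\{e\}\in\Delta(\cB)$. Otherwise $y$ is a literal of some $f\neq e$, and $(T_X\cup q)\setminus p=T_X-x+y\in\cB$.

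But nothing in the proposal shows that the partner $(T_X\cup p)\setminus q=T_X+x^*-y^*$ lies in $\cB$. That set coincides with the other output $T_Y+x-y$ only when $|X\triangle Y|=2$. The fallback does not help, for two reasons:
\begin{enumerate}
\item At $T_Y$ you know only $T_Y$ and $(T_Y\cup p)\setminus q$. These lie in two \emph{different} pairs of the three-term relation, and Lemma~\ref{lem: 3-term Plucker for antisymmetric} forces nothing until some pair lies entirely in $\cB$.
\item Even if it did produce a transversal, two of the three candidates, $T_{Y\triangle\{e\}}$ and $T_{Y\triangle\{e,f\}}$, remove $e$ from the symmetric difference.
\end{enumerate}
The non-emptiness step also fails as planned: $\cB=\{A\}$, with $A$ a single almost-transversal, satisfies the axiom vacuously.

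\textbf{The gap cannot be closed from the axiom as stated here together with Lemma~\ref{lem: 3-term Plucker for antisymmetric}.} Take $n=3$, and let $\cB$ consist of the $3$-sets meeting each of the blocks $\{1,3^*\}$, $\{2,1^*\}$, $\{3,2^*\}$ exactly once. These are the bases of a partition matroid.
\begin{enumerate}
\item Since $k$ and $k^*$ always lie in different blocks, each member of $\cB$ has at most one skew pair, so $\cB\subseteq\cT_3\cup\cA_3$.
\item The symmetric exchange property of matroid bases gives the axiom, even without its side conditions.
\item One checks that no three-term relation has a full pair, so Lemma~\ref{lem: 3-term Plucker for antisymmetric} holds as well.
\item In a transversal, choosing $1$ forces $2$ and then $3$, while choosing $3^*$ forces $2^*$ and then $1^*$. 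So the only transversal bases are $\{1,2,3\}$ and $\{1^*,2^*,3^*\}$.
\end{enumerate}
Hence $\Delta(\cB)=\{\emptyset,[3]\}$, which is not a delta-matroid.

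This is precisely your stuck case. With $X=\emptyset$, $Y=[3]$ and $e=1$, the axiom must return $y=2$. Then $\{2,2^*,3^*\}\in\cB$, while $(T_X\cup p)\setminus q=\{1,1^*,3^*\}\notin\cB$.

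\textbf{What is missing.} You need the symmetry $(T\cup p)\setminus q\in\cB\iff(T\cup q)\setminus p\in\cB$. It holds for Lagrangian subspaces, whose two corresponding Plücker coordinates agree up to sign. You also need the existence of a transversal basis. Neither follows from the axiom as written, so both must be supplied by the definition.

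Granting the symmetry, your first step finishes at once. The pair $\{(T_X\cup p)\setminus q,(T_X\cup q)\setminus p\}$ is then full, so Lemma~\ref{lem: 3-term Plucker for antisymmetric} places $\{T_X,T_{X\triangle\{e,f\}}\}$ or $\{T_{X\triangle\{e\}},T_{X\triangle\{f\}}\}$ inside $\cB$. No induction is needed.
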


We now define a graph associated with an antisymmetric matroid and show a homotopy theorem for antisymmetric matroids (Theorem~\ref{thm: Maurer homotopy theorem for antisymmetric matroids}). The proof will easily follow from the aforementioned relation between antisymmetric matroids and delta-matroids and the homotopy theorem for delta-matroids (Theorem~\ref{thm: 2 homotopy theorems for delta-matroids}). We remark that Theorem~\ref{thm: Maurer homotopy theorem for antisymmetric matroids} implies \cite[Theorem~5.3]{Kim} (see Section~\ref{sec: homology}), which is a key tool to show the equivalence between two cryptomorphic definitions of antisymmetric matroids with coefficients. Compared with our polyhedral approach, the proof of \cite[Theorem~5.3]{Kim} is based on induction on the weight of a cycle and numerous case analysis.

\begin{defn}
    The \emph{transversal basis graph} $\cG_M$ of an antisymmetric matroid $M$ is a graph such that its vertex set is the set of transversal bases of $M$, and two vertices $B$ and $B'$ are adjacent if and only if
    \begin{itemize}
        \item $|B\setminus B'| = 1$ or 
        \item $|B\setminus B'| = 2$ and there is an almost-transversal basis $A$ such that \[ |B\setminus A| = |B'\setminus A| = 1. \]
    \end{itemize} 
    
\end{defn}

The transversal basis graph $\cG_M$ is close to the reduced basis graph $G'_{\Delta(M)}$. 

\begin{lem}\label{lem: compare basis graphs}
Let $M$ be an antisymmetric matroid. Then its transversal basis graph $\cG_M$ can be obtained from $G'_{\Delta(M)}$ by adding some chords to the $4$-cycles of $G'_{\Delta(M)}$ (cf. Lemma~\ref{lem: chord}).  
\end{lem}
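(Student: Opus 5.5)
We identify each transversal $T$ of $[n]\sqcup[n]^*$ with the subset $T\cap[n]$ of $[n]$; this is a bijection between $\cT_n$ and subsets of $[n]$. By Lemma~\ref{lem: antisymmetric and delta}, it restricts to a bijection between the vertex set of $\cG_M$ (the transversal bases of $M$) and the vertex set of $G'_{\Delta(M)}$. For transversals $B,B'$ with images $X,Y\subseteq[n]$, the set $B\setminus B'$ consists of one element from each skew pair $\{i,i^*\}$ with $i\in X\triangle Y$. Hence $|B\setminus B'|=|X\triangle Y|$. Under this bijection, the plan is to prove two things:
\begin{enumerate}[label=(\alph*)]
\item every edge of $G'_{\Delta(M)}$ is an edge of $\cG_M$;
\item every edge of $\cG_M$ is either an edge of $G'_{\Delta(M)}$ or a chord of a $4$-cycle of $G'_{\Delta(M)}$.
\end{enumerate}

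\textbf{Step 1: distance-one edges.} Edges with $|X\triangle Y|=1$ appear in both graphs by definition, since $|B\setminus B'|=1$.

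\textbf{Step 2: edges of $G'_{\Delta(M)}$ with $|X\triangle Y|=2$.} Write $X\triangle Y=\{i,j\}$ and let $p=\{i,i^*\}$, $q=\{j,j^*\}$, so $B'=B\triangle(p\cup q)$. We must produce an almost-transversal basis $A$ with $|B\setminus A|=|B'\setminus A|=1$. Such an $A$ agrees with $B$ and $B'$ outside $p\cup q$, and contains one of the skew pairs $p,q$ fully while missing the other. Hence the candidates are exactly $(B\cup p)\setminus q$ and $(B\cup q)\setminus p$.

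Apply Lemma~\ref{lem: 3-term Plucker for antisymmetric} with $T=B$. The pair $\{T,T\triangle(p\cup q)\}=\{B,B'\}$ lies in $\cB$, so at least one other pair lies in $\cB$.
\begin{itemize}
\item If $\{T\triangle p,T\triangle q\}\subseteq\cB$, then $X\triangle\{i\}$ and $X\triangle\{j\}$ are both in $\Delta(\cB)$. These two sets play the roles of $B_1,B_2$ in \eqref{eq: reduce}, contradicting that $XY$ is an edge of $G'_{\Delta(M)}$.
\item Therefore the first pair lies in $\cB$, which supplies the required $A$. So $BB'$ is an edge of $\cG_M$.
\end{itemize}

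\textbf{Step 3: extra edges of $\cG_M$.} Take an edge $BB'$ of $\cG_M$ with $|B\setminus B'|=2$ whose image $XY$ is not an edge of $G'_{\Delta(M)}$. Since $|X\triangle Y|=2$, the second condition in the definition of $G'_{\Delta(M)}$ must fail. So there exist distinct $X_1,X_2\in\Delta(\cB)$ at symmetric-difference distance $1$ from both $X$ and $Y$, i.e.\ $\{X_1,X_2\}=\{X\triangle\{i\},X\triangle\{j\}\}$. The walk $X\,X_1\,Y\,X_2\,X$ is a $4$-cycle in $G'_{\Delta(M)}$, since all its edges have distance $1$. Thus $XY$ is a chord of this $4$-cycle, which proves (b).

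The main point needing care is Step 2: correctly identifying the only possible almost-transversals $A$ and matching them to the first pair in Lemma~\ref{lem: 3-term Plucker for antisymmetric}. Everything else is bookkeeping under the identification of transversals with subsets of $[n]$.
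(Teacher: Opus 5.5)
Your proof is correct and follows the paper's argument essentially step for step. It uses the same bijection (your $T\mapsto T\cap[n]$ is the inverse of the paper's $h$). It has the same three-way split: distance-one edges; distance-two edges of $G'_{\Delta(M)}$, handled by Lemma~\ref{lem: 3-term Plucker for antisymmetric} with $T=B$ and the skew pairs $p,q$; and the remaining edges of $\cG_M$, exhibited as chords of the $4$-cycle $X\,X_1\,Y\,X_2\,X$. Your classification of all possible almost-transversals $A$ is more than needed, since only the sufficiency of $(B\cup p)\setminus q$ is used, but it is correct.
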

\begin{proof}
Let \[
        h(B) := B\cup ([n]\setminus B)^*
        \quad \text{with} \quad
        B \in \Delta(\cB).
    \]
 Then $h$ is a bijection from $\Delta(\cB) = V(G'_{\Delta(M)})$ to $\cB = V(\cG_M)$. Moreover, we have \[|B\triangle B'| = |h(B) \setminus h(B')|.\]

To compare the edges of the two graphs, we have three arguments:
\begin{itemize}
    \item The edges $BB'$ in $G'_{\Delta(M)}$ with $|B\triangle B'| = 1$ are in bijection with the edges $h(B)h(B')$ in $\cG_M$ with $|h(B)\setminus h(B')|=1$.
    \item For any edge $BB'$ in $G'_{\Delta(M)}$ with $|B\triangle B'| = 2$, we show that $h(B)h(B')$ is an edge of $\cG_M$ (with $|h(B) \setminus h(B')|=2$). By definition, there does not exist a pair of distinct vertices $B_1$ and $B_2$ in $G'_{\Delta(M)}$ such that 
        \[
            |B\triangle B_1|=|B\triangle B_2| =|B'\triangle B_1|=|B'\triangle B_2|= 1.\tag{\ref{eq: reduce}}
        \]
Let $B\triangle B' = \{x,y\}$. Then the condition implies that $B\triangle \{x\}$ or $B\triangle \{y\}$ is not a basis of $\Delta(M)$. Let $T = h(B)$, $p=\{x,x^*\}$, and $q=\{y,y^*\}$.
Then $h(B') = T \triangle (p\cup q)$, and $T \triangle p$ or $T \triangle q$ is not a basis of $M$. Therefore, by Lemma~\ref{lem: 3-term Plucker for antisymmetric}, an almost-transversal $A=(T\cup p)\setminus q$ is a basis of $M$.
Because $|h(B_1)\setminus A| = |h(B_2)\setminus A| = 1$, $h(B)h(B')$ is an edge of $\cG_M$.
\item For any edge $h(B)h(B')$ in $\cG_M$ with $|h(B)\setminus h(B')|=2$, if $BB'$ is not an edge of $G'_{\Delta(M)}$, then by definition there exists a pair of distinct vertices $B_1$ and $B_2$ with property \eqref{eq: reduce}. Hence $BB_1B'B_2B$ is a $4$-cycle in $G'_{\Delta(M)}$. \qedhere
\end{itemize}
\end{proof}

As a direct consequence of Theorem~\ref{thm: 2 homotopy theorems for delta-matroids}(i), Lemma~\ref{lem: chord}, and Lemma~\ref{lem: compare basis graphs}, we obtain the homotopy theorem for antisymmetric matroids. 

\begin{thm}\label{thm: Maurer homotopy theorem for antisymmetric matroids}
    Any two walks with the same end-points in the transversal basis graph $\cG_M$ of an antisymmetric matroid $M$ are combinatorially homotopic.
\end{thm}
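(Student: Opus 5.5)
The plan is to transfer the result from the reduced basis graph of the associated delta-matroid, following the remark preceding the theorem. Let $\Delta(M)=([n],\Delta(\cB))$ be the delta-matroid of Lemma~\ref{lem: antisymmetric and delta}. By Theorem~\ref{thm: 2 homotopy theorems for delta-matroids}(i), any two walks with the same end-points in $G'_{\Delta(M)}$ are combinatorially homotopic. By Lemma~\ref{lem: compare basis graphs}, the bijection $h(B)=B\cup([n]\setminus B)^*$ identifies $V(G'_{\Delta(M)})$ with $V(\cG_M)$. Under this identification, $\cG_M$ is obtained from $G'_{\Delta(M)}$ by adding chords to $4$-cycles. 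Lemma~\ref{lem: chord} then yields the statement for $\cG_M$.

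Two routine points need checking before Lemma~\ref{lem: chord} applies. First, combinatorial homotopy depends only on the abstract graph, so it transports verbatim along the graph isomorphism $h$ from $G'_{\Delta(M)}$ onto the spanning subgraph of $\cG_M$ formed by the edges in the first two bullets of Lemma~\ref{lem: compare basis graphs}. Second, Lemma~\ref{lem: chord} is stated for a connected simple graph $G'$. Simplicity is clear. Connectivity of $G'_{\Delta(M)}$ follows from Lemma~\ref{lem: 1-skeleton delta}, since it is the graph of the polytope $\pP_{\Delta(M)}$ and polytope graphs are connected. Here we use that $\Delta(\cB)$ is non-empty, which is implicit in the statement that $\Delta(M)$ is a delta-matroid. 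If $M$ has no transversal basis, then $\cG_M$ is empty and the statement is vacuous.

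The only step requiring care is the one already handled in Lemma~\ref{lem: compare basis graphs}: every edge of $\cG_M$ not coming from $G'_{\Delta(M)}$ must be a chord of a $4$-cycle of $G'_{\Delta(M)}$. The relevant $4$-cycle is $h(B)h(B_1)h(B')h(B_2)$, and its edges have $|B\triangle B_i|=1$, so they genuinely lie in $G'_{\Delta(M)}$. With that lemma in hand, the theorem is an immediate combination of the three cited results.
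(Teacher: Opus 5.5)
Your proposal is correct and is essentially the paper's proof, which deduces the theorem directly from Theorem~\ref{thm: 2 homotopy theorems for delta-matroids}(i), Lemma~\ref{lem: compare basis graphs}, and Lemma~\ref{lem: chord}. Your extra checks fill in details the paper leaves implicit: connectivity of $G'_{\Delta(M)}$ via Lemma~\ref{lem: 1-skeleton delta}, transport of homotopy along $h$, the degenerate case with no transversal basis, and the fact that the $4$-cycles $BB_1B'B_2$ lie in $G'_{\Delta(M)}$.
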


\section{Homotopy theorem for integral polymatroids}\label{sec: polymatroid}

In this section, we generalize Maurer's homotopy theorem to integral polymatroids.
Let $\Delta_n^r := \{\alpha \in \bZ_{\ge0}^n \colon \sum_{i=1}^n \alpha_i = r\}$ be a dilated discrete simplex.
For $\alpha \in \bZ^n$, let $|\alpha| := |\alpha_1| + |\alpha_2| + \cdots + |\alpha_n|$.

\begin{defn}
    An \emph{integral polymatroid} on $[n]$ is a pair $([n], \cB)$ such that $\cB$ is a non-empty subset of a discrete simplex $\Delta_n^r$ satisfying that for all $\alpha,\alpha'\in \cB$ and $i\in[n]$ with $\alpha_i>\alpha'_i$, there is $j\in[n]$ with $\alpha_j<\alpha'_j$ such that $\alpha-\bfe_i+\bfe_j\in \cB$.
\end{defn}

For simplicity, we call an integral polymatroid a \emph{polymatroid} in the rest of this paper.
Note that a matroid is identified with a polymatroid whose bases are 0/1-vectors. 

\begin{defn}
    The \emph{basis polytope} $\pP_J$ of a polymatroid $J=([n], \cB)$ is the convex hull of the vectors in $\cB$.
\end{defn}

\begin{thm}[see {\cite[Theorem~4.12]{Murota2003}}]
    \label{thm: integer points of polymatroid polytope}
    Let $J = ([n],\cB)$ be a polymatroid. Then $\cB = \pP_J \cap \bZ^n$.
\end{thm}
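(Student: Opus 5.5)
The inclusion $\cB\subseteq \pP_J\cap\bZ^n$ is immediate, since every basis is a vertex-candidate of $\pP_J$ and is an integer vector. The content is the reverse inclusion. I plan a distance-minimization argument driven by the exchange axiom, using a separating linear functional to reach a contradiction.

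Fix an integer point $x\in\pP_J$, so $x=\sum_{\beta\in\cB}\lambda_\beta\beta$ is a convex combination. Every basis lies in $\Delta_n^r$, so $x_k\ge 0$ for all $k$ and $\sum_k x_k=r$. Choose $\alpha\in\cB$ minimizing $|\alpha-x|$, and suppose $\alpha\ne x$. Set $P=\{k:\alpha_k>x_k\}$ and $N=\{k:\alpha_k<x_k\}$. Because $\alpha$ and $x$ have the same coordinate sum, both sets are non-empty. If there were $i\in P$ and $j\in N$ with $\alpha-\bfe_i+\bfe_j\in\cB$, this new basis would be strictly closer to $x$ in $|\cdot|$, since $x$ is integral and the distance drops by $2$. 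That contradicts minimality. So we may assume that no such exchange exists.

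The key step is to turn this "no good exchange" condition into a set $S$ with two properties. First, $\alpha(S)\ge\beta(S)$ for every $\beta\in\cB$, meaning $\alpha$ maximizes the linear functional $\mathbf 1_S$ on $\pP_J$. Second, $x(S)>\alpha(S)$. Together these contradict $x\in\pP_J$. Concretely, I would let $S$ be the set of coordinates $k$ that can reach $N$ in the exchange digraph at $\alpha$. In that digraph there is an arc $k\to j$ whenever $\alpha-\bfe_k+\bfe_j\in\cB$; the set $S$ contains $N$ and is closed under taking in-neighbours. The hypothesis then says $S\cap P=\emptyset$. On $S\setminus N$ we have $\alpha_k=x_k$, and on $N$ we have $\alpha_k<x_k$. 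Hence $x(S)>\alpha(S)$, which gives the second property.

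The main obstacle is the first property: showing $\beta(S)\le\alpha(S)$ for all $\beta\in\cB$. I would argue by contradiction, taking $\beta$ with $\beta(S)>\alpha(S)$ and $|\beta-\alpha|$ minimal. Some $k\notin S$ must have $\alpha_k>\beta_k$; otherwise $\beta(S)>\alpha(S)$ together with equal total sums is impossible. The exchange axiom applied to $\alpha,\beta,k$ gives $j$ with $\alpha_j<\beta_j$ and $\alpha-\bfe_k+\bfe_j\in\cB$. If $j\in S$, then $k$ would be an in-neighbour of $S$, contradicting closedness, so $j\notin S$. In that case I would replace $\alpha$ by $\alpha-\bfe_k+\bfe_j$ inside the comparison, or exchange on the $\beta$ side, to get a pair with the same $S$-deficit but smaller distance. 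Making this induction respect the definition of $S$, which is tied to $\alpha$, is delicate. If it resists, I would instead invoke the standard fact that the single-element exchange property implies that the digraph condition certifies optimality of $\alpha$ for $\mathbf 1_S$ (M-concavity: local optimality implies global optimality, as in \cite{Murota2003}), proved by the same exchange-and-descend argument.
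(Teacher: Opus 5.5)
The paper does not prove this statement; it only cites Murota. Your argument therefore has to stand on its own. The overall structure is right: take the nearest basis $\alpha$ to $x$, build the exchange digraph at $\alpha$, and find a set $S$ such that $\alpha$ maximizes $\mathbf 1_S$ over $\pP_J$ while $x(S)>\alpha(S)$. However, two steps are not justified as written.

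\textbf{First gap.} The claim $S\cap P=\emptyset$ does not follow from the minimality of $|\alpha-x|$. Minimality only forbids a \emph{single} arc $i\to j$ with $i\in P$, $j\in N$. But $S$ contains every coordinate that reaches $N$ along a directed path, possibly through other coordinates.

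What you need is transitivity of the exchange digraph: if $a,b,c$ are distinct and $\alpha-\bfe_a+\bfe_b$ and $\alpha-\bfe_b+\bfe_c$ lie in $\cB$, then so does $\alpha-\bfe_a+\bfe_c$. This takes one application of the axiom, to $\gamma=\alpha-\bfe_b+\bfe_c$ and $\beta=\alpha-\bfe_a+\bfe_b$ at the coordinate $a$. Since $\gamma-\beta=\bfe_a-2\bfe_b+\bfe_c$, the only admissible partner is $b$, which gives $\gamma-\bfe_a+\bfe_b=\alpha-\bfe_a+\bfe_c\in\cB$. Applied repeatedly to a shortest directed path from $P$ to $N$, this collapses the path to a single arc, which minimality rules out.

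\textbf{Second gap.} You leave open that $\alpha$ maximizes $\mathbf 1_S$. The ``exchange on the $\beta$ side'' option you mention finishes it at once. Nothing delicate happens, because $\alpha$, and hence $S$, never changes.

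You have already found $j\notin S$ with $\beta_j>\alpha_j$. Apply the axiom to $\beta,\alpha$ at $j$. This gives $l$ with $\beta_l<\alpha_l$ and $\beta'=\beta-\bfe_j+\bfe_l\in\cB$. Since $j\notin S$, we get $\beta'(S)\ge\beta(S)>\alpha(S)$. Also $|\beta'-\alpha|=|\beta-\alpha|-2$, which contradicts the choice of $\beta$.

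Your fallback of invoking Murota's local-to-global optimality would cite the very source the paper cites for the whole theorem, so it adds nothing self-contained. With the two additions above, your proof is complete and independent of Murota.
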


\begin{thm}[see {\cite[(4.43)]{Murota2003}}]
    \label{thm:GGMS for polymatroids}
    Let $\pP$ be an integral polytope in $\mathbb{R}_{\ge0}^n$. Then $\pP$ is the basis polytope of a polymatroid if and only if every edge is parallel to $\bfe_i-\bfe_j$ for some $i\ne j$.
\end{thm}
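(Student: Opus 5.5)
The statement has two directions. In both I would work with a linear functional $w$ selecting a face, and relate the exchange axiom to the edge directions through Theorem~\ref{thm: integer points of polymatroid polytope}, which gives $\cB=\pP_J\cap\bZ^n$. Note first that every $\alpha\in\cB$ satisfies $\sum_k\alpha_k=r$. Hence $\pP_J$ lies in the hyperplane $\{x:\sum_k x_k=r\}$, and every edge direction has coordinate sum $0$.

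\emph{Only if.} Let $\alpha\beta$ be an edge of $\pP_J$, and let $w$ be a linear functional whose maximizers on $\pP_J$ are exactly the segment $[\alpha,\beta]$. The vertices $\alpha,\beta$ are integral, so they lie in $\cB$. Pick $i$ with $\alpha_i>\beta_i$. The key tool is the \emph{simultaneous} exchange property: there is $j$ with $\alpha_j<\beta_j$ such that both $\alpha-\bfe_i+\bfe_j$ and $\beta+\bfe_i-\bfe_j$ lie in $\cB$.
\begin{itemize}
\item Granting this, $w(\alpha-\bfe_i+\bfe_j)+w(\beta+\bfe_i-\bfe_j)=w(\alpha)+w(\beta)=2\max w$, so both points are maximizers.
\item Hence $\alpha-\bfe_i+\bfe_j\in[\alpha,\beta]$, which forces $\beta-\alpha$ to be a positive multiple of $\bfe_j-\bfe_i$.
\end{itemize}
I expect deriving the simultaneous exchange from the one-sided axiom in the definition to be the main obstacle. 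My plan is the standard induction on $|\alpha-\beta|$ (as in Murota's treatment of M-convex sets). Suppose no $j$ works simultaneously. Using the one-sided axiom repeatedly, I would build a $\beta'\in\cB$ strictly closer to $\alpha$ on which the exchange still fails, and derive a contradiction at the minimal counterexample. If this becomes too messy, I would fall back on citing \cite[Theorem~4.3]{Murota2003}.

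\emph{If.} Let $\pP$ be an integral polytope in $\bR_{\ge0}^n$ whose edges are all parallel to vectors $\bfe_i-\bfe_j$. Its graph is connected, so all vertices have the same coordinate sum $r$, and $\pP\subseteq\bR^n_{\ge0}\cap\{\sum_k x_k=r\}$. Set $\cB:=\pP\cap\bZ^n\subseteq\Delta^r_n$. This set is non-empty and its convex hull is $\pP$, so it remains to verify the exchange axiom for $\alpha,\alpha'\in\cB$ with $\alpha_i>\alpha'_i$.
\begin{enumerate}
\item Replace $\pP$ by $\pQ:=\pP\cap\{x:\min(\alpha_k,\alpha'_k)\le x_k\le\max(\alpha_k,\alpha'_k)\ \forall k\}$. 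I would show that $\pQ$ is still integral and still has all edges parallel to some $\bfe_k-\bfe_l$. For this I would use the normal-fan description: polytopes with these edge directions are exactly those whose normal fan coarsens the braid fan. I would then check that intersecting with the coordinate slabs (themselves of this type inside the hyperplane $\sum_k x_k=r$) preserves the property, possibly by treating one slab at a time.
\item Inside $\pQ$, the tangent cone at the point $\alpha$ contains $\alpha'-\alpha$. This cone is generated by the directions of the edges of $\pQ$ emanating from the minimal face of $\pQ$ containing $\alpha$.
\item The $i$-th coordinate of $\alpha'-\alpha$ is negative, so some generator has the form $\bfe_j-\bfe_i$. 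Staying in $\pQ$ in that direction requires $x_j$ to increase, so $\alpha_j<\alpha'_j$ because of the box constraints.
\item Since $\pQ$ is integral and its edges are parallel to primitive vectors, the lattice point $\alpha-\bfe_i+\bfe_j$ lies in $\pQ\subseteq\pP$. It is therefore in $\cB$, which gives the exchange.
\end{enumerate}
The delicate points here are the preservation of integrality and of the edge directions under the box cut in step~1. I would expect to handle both through total unimodularity of the constraint matrix formed by the braid-type inequalities together with the coordinate bounds.
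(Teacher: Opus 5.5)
Your overall architecture is sound: the box cut makes $\alpha$ a vertex, the tangent-cone argument finds a direction $\bfe_j-\bfe_i$, and integrality of the neighbouring vertex finishes. But Step~1 of the ``if'' direction is a genuine gap, and it carries all the content. Steps~2--4 are routine once you know that $\pQ$ is integral with every edge parallel to some $\bfe_k-\bfe_l$, and neither tool you name proves this.

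Total unimodularity is not available. A system $\sum_{k\in S}x_k\le f(S)$ over a family of subsets $S$ is not totally unimodular in general; the rows $\bfe_{\{1,2\}},\bfe_{\{2,3\}},\bfe_{\{1,3\}}$ already have determinant $2$. Integrality of polymatroid polytopes comes from submodularity of $f$ via uncrossing, which you do not have for an arbitrary $\pP$ with the edge property without essentially proving the theorem. In any case, unimodularity would not control edge directions.

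The normal-fan heuristic is unsound as stated. Normal fans are compatible with Minkowski sums, not intersections. Two polytopes whose edges are all parallel to vectors $\bfe_k-\bfe_l$ can meet in a polytope with other edges: the squares $\{x\in[0,1]^4: x_1+x_2=x_3+x_4=1\}$ and $\{x\in[0,1]^4: x_1+x_3=x_2+x_4=1\}$ meet in the segment from $\bfe_2+\bfe_3$ to $\bfe_1+\bfe_4$. So the fact that the slabs are ``of this type'' is no reason; you must use that the cuts are coordinate hyperplanes at integer levels.

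This can be done directly, along the lines of the proof of Lemma~\ref{lem: decomposition of polymatroid polytope}. For $\mu\in\pQ$, set up the following.
\begin{enumerate}
\item Let $F$ be the face of $\pP$ with $\mu$ in its relative interior.
\item Let $T$ be the set of $k$ for which $\mu_k$ equals one of the two box bounds, and put $G=\{x \text{ in the box}: x_k=\mu_k \text{ for } k\in T\}$.
\item Let $S$ range over the components of the graph on $[n]$ that joins $k$ and $l$ whenever $F$ has an edge parallel to $\bfe_k-\bfe_l$.
\end{enumerate}
Since $\mu$ lies in the relative interiors of both $F$ and $G$, the face of $\pQ$ with $\mu$ in its relative interior is $F\cap G$. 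Its direction space is $\{d: d_k=0 \text{ for } k\in T,\ \sum_{k\in S}d_k=0 \text{ for all } S\}$, which is the direct sum over $S$ of the sum-zero vectors supported on $S\setminus T$.
\begin{enumerate}
\item If this space is a line, it is spanned by some $\bfe_k-\bfe_l$.
\item If it is zero, then $|S\setminus T|\le 1$ for every $S$. Then $\mu$ is integral, because each $\sum_{k\in S}\mu_k$ is an integer (the vertices of $F$ are integral) and $\mu_k\in\bZ$ for $k\in T$.
\end{enumerate}
Note also that $\alpha$ is a vertex of the box, hence of $\pQ$. So in Step~2 the cone is generated by the edges at $\alpha$ itself, which is what Step~4 needs.

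In the ``only if'' direction, the simultaneous exchange is a true, citable fact, but your induction sketch is not a proof, and you do not need it. Write $w(x)=\sum_k w_kx_k$.
\begin{enumerate}
\item For each $i$ with $\alpha_i>\beta_i$, the one-sided axiom at $\alpha$ gives a point of $\cB$ showing $w_j\le w_i$ for some $j$ with $\alpha_j<\beta_j$.
\item For each $j$ with $\beta_j>\alpha_j$, the axiom at $\beta$ gives a point showing $w_i\le w_j$ for some $i$ with $\alpha_i>\beta_i$.
\item In both cases equality forces $\beta-\alpha$ to be parallel to $\bfe_j-\bfe_i$.
\end{enumerate}
If equality never occurs, take the index $k$ with $\alpha_k\neq\beta_k$ that minimizes $w_k$; this gives a contradiction, as in the directed-cycle argument of Section~\ref{sec: GGMS for delta-matroids}. (The paper itself gives no proof of this statement; it cites \cite[(4.43)]{Murota2003}.)
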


\begin{defn}
    The \emph{basis graph} $G_J$ of a polymatroid $J=([n], \cB)$ is a graph on $\cB$ such that two vertices $\alpha$ and $\alpha'$ are adjacent if and only if $|\alpha-\alpha'|=2$.
\end{defn}

We prove the homotopy theorem for polymatroids (Theorem~\ref{thm: Maurer homotopy theorem for polymatroids}) by constructing a polyhedral complex such that the 1-skeleton is the basis graph of a polymatroid and each face is a translate of the basis polytope of a matroid.

The following lemma shows that the basis polytope of a polymatroid can be decomposed into translates of basis polytopes of matroids. It is a standard result in the theory of submodular functions; see~{\cite[\S2]{FM2022}} or {\cite[Lemma~5.19]{Sanchez2023}}. Since no proof is explicitly given there, we provide a proof.

\begin{lem}\label{lem: decomposition of polymatroid polytope}
    Let $J$ be a polymatroid on $[n]$ and $\pQ=\{x \in \mathbb{R}^n \colon 0\le x_i\le 1 \text{ for all } i\}$ be the $n$-dimensional unit cube.
    For any $\alpha\in\bZ_{\ge0}^n$, 
    the intersection $\pP_J \cap (\pQ+\alpha)$ equals a translate of the basis polytope of a matroid, unless the intersection is empty.
\end{lem}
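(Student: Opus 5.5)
Let $\pR:=\pP_J\cap(\pQ+\alpha)$ and assume it is non-empty. The plan is to show that $\pR-\alpha$ is a 0/1-polytope all of whose edges are parallel to some $\bfe_i-\bfe_j$, and then apply the ``if'' direction of Theorem~\ref{thm:GGMS}. The proof breaks into three steps.

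\emph{Step 1: $\pR$ is a polytope given by a linear system in which every row of the constraint matrix is a 0/1-vector or a signed unit vector.} I would use the standard description of the polymatroid base polytope. It is
\[\pP_J=\{x\in\bR^n: x(S)\le f(S)\ \text{for all } S\subseteq[n],\ x([n])=f([n])\},\]
where $f$ is the integer-valued submodular rank function $f(S)=\max_{\beta\in\cB}\beta(S)$. This description is implicit in Theorem~\ref{thm:GGMS for polymatroids} and Theorem~\ref{thm: integer points of polymatroid polytope} (see \cite{Murota2003}). Intersecting with the cube $\pQ+\alpha$ adds the constraints $\alpha_i\le x_i\le \alpha_i+1$.

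\emph{Step 2: $\pR$ is integral.} This is the main obstacle. The direct route is that the intersection of two integral polymatroid-type polytopes is integral (Edmonds' polymatroid intersection theorem). Indeed, the box $\pQ+\alpha$ is itself a base-type polytope of the modular function $g(S)=\sum_{i\in S}(\alpha_i+1)$ intersected with lower bounds, so $\pR$ is the intersection of two integral generalized polymatroids. Edmonds' theorem then gives integrality.

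Since the paper is meant to be self-contained, I would first try a more elementary argument. Let $v$ be a vertex of $\pR$. Take a linear functional $c$ maximized uniquely at $v$ on $\pR$, and argue by a greedy/exchange argument that $v$ is integral. Concretely, suppose $v$ is fractional. The fractional coordinates form a set $F$ with $|F|\ge 2$, because $x([n])=f([n])$ is an integer. The tight sets $S$ with $v(S)=f(S)$ form a lattice closed under union and intersection, by submodularity. Take minimal tight sets distinguishing two fractional coordinates $i,j$ lying in the same atoms of this lattice. Then $v\pm\varepsilon(\bfe_i-\bfe_j)$ stays in $\pR$, since the cube constraints are slack at fractional coordinates. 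This contradicts $v$ being a vertex. I expect the careful choice of $i,j$ (using uncrossing of tight sets) to be the delicate point.

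\emph{Step 3: conclude.} Every vertex of $\pR$ then lies in $\pP_J\cap\bZ^n\cap(\pQ+\alpha)$. By Theorem~\ref{thm: integer points of polymatroid polytope}, these vertices are bases in $\alpha+\{0,1\}^n$, so $\pR-\alpha$ is a 0/1-polytope. Every edge of $\pR$ is an edge of a face of $\pP_J$ cut by the facets $x_i=\alpha_i$ and $x_i=\alpha_i+1$ of the cube. Its direction must lie in the intersection of the affine hull of a face of $\pP_J$ with coordinate hyperplanes.

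The simpler approach is to avoid this and use the same perturbation idea. If $[u,w]$ is an edge of $\pR$, then $w-u$ is a 0/$\pm1$ vector summing to zero. If it had support larger than $2$, I would use the tight-set lattice at the midpoint to find $\bfe_i-\bfe_j$ directions decomposing it. That would show the midpoint is not on an edge, contradicting edge minimality. Hence every edge of $\pR-\alpha$ is a translate of some $\bfe_i-\bfe_j$, and Theorem~\ref{thm:GGMS} shows $\pR-\alpha$ is the basis polytope of a matroid.
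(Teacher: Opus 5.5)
Your argument is correct, but it takes a different route from the paper in both halves.

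\textbf{Integrality.} The paper takes the face $F$ of $\pP_J-\alpha$ whose relative interior contains a putative fractional vertex $\beta$. By Theorem~\ref{thm:GGMS for polymatroids}, the direction space of $F$ is spanned by vectors $\bfe_k-\bfe_{k'}$ with $k,k'$ in a common connected component $S_j$ of the graph of edge directions. Hence $\sum_{k\in S_j}\beta_k\in\bZ$, so a fractional $\beta_i$ has a fractional partner $\beta_{i'}$ in its block, and perturbing along $\bfe_i-\bfe_{i'}$ gives the contradiction.

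Your tight-set argument is the inequality-side version of the same idea: the atoms of the tight-set lattice replace the blocks $S_j$. The point you flag as delicate is settled by a maximal chain $\emptyset=T_0\subsetneq\cdots\subsetneq T_k=[n]$ of tight sets. Each $v(T_t\setminus T_{t-1})=f(T_t)-f(T_{t-1})$ is an integer. Moreover, no tight set $T$ separates two elements of $T_t\setminus T_{t-1}$, since otherwise $T_{t-1}\cup(T\cap T_t)$ would refine the chain. The price of your version is the submodular description $x(S)\le f(S)$. That is an additional standard theorem (e.g.\ in \cite{Murota2003}), not something implicit in the two results the paper quotes. The polymatroid-intersection route is heavier still.

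\textbf{The matroid step.} Here the difference is more substantive. The paper never examines edges of $\pR$ and instead checks basis exchange directly. For vertices $\beta,\beta'$ of $\pR-\alpha$ with $\beta_i=1$ and $\beta'_i=0$, apply the exchange axiom of $J$ to the bases $\beta+\alpha,\beta'+\alpha$ (Theorem~\ref{thm: integer points of polymatroid polytope}). This yields $j$ with $\beta_j=0$ and $\beta'_j=1$, and then $\beta-\bfe_i+\bfe_j$ is again a 0/1 point of $\pR-\alpha$.

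Your Step~3 instead shows that every edge is parallel to some $\bfe_i-\bfe_j$, and then invokes the ``if'' direction of Theorem~\ref{thm:GGMS}. This does work:
\begin{enumerate}
\item At the midpoint of an edge $[u,w]$, the fractional coordinates are exactly the support of $w-u$, and all are half-integral.
\item Each atom of the tight-set lattice therefore contains an even number of them.
\item So a support of size at least $4$ yields two linearly independent two-sided directions $\bfe_i-\bfe_j$.
\item Then the minimal face through the midpoint would have dimension at least $2$.
\end{enumerate}
That dimension count, rather than ``the midpoint is not on an edge,'' is the actual contradiction. But this route is longer and relies on the harder sufficiency direction of GGMS. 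The paper's exchange check takes two lines and needs only the integer-point theorem.
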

\begin{proof}
    Denote $\pP := \pP_J - \alpha$.
    We claim that $\pP \cap \pQ$ is a basis polytope of a matroid.

    First, we prove that $\pP \cap \pQ$ is a 0/1-polytope. 
    For the sake of contradiction, assume $\pP \cap \pQ$ has a vertex $\beta$ such that $0 < \beta_i < 1$ for some $i$. 
    Let $F$ be the face of $\pP$ whose relative interior contains $\beta$. By Theorem~\ref{thm:GGMS for polymatroids}, $F$ is a translate of a basis polytope of a polymatroid, and hence $F$ is contained in an affine space that is parallel to a linear space generated by vectors of the form $\bfe_j-\bfe_{j'}$. 
    
    Consider an auxiliary graph on $[n]$ such that $j$ and $j'$ are adjacent if $F$ has an edge that is a translate of $\bfe_j-\bfe_{j'}$, and let $S_1, \dots, S_m$ be the vertex sets of the connected components of the auxiliary graph.
    Let $s(j) := \min S_j$.
    Then the linear space parallel to the affine span of $F$ is generated by the $\sum_j ( |S_j|-1 )$ vectors $\bfe_k - \bfe_{s(j)}$ with $1\le j\le m$ and $k \in S_j\setminus\{s(j)\}$.
Because the vertices of $F$ are integral, for any point $\gamma$ in $F$, we have $\sum_{k\in S_j} \gamma_k \in \bZ$ for each $j$.
    Thus, $\sum_{k\in S_j} \beta_k \in \bZ$ for each $j$.
    
    Since $0 < \beta_i < 1$,
    there is $i'$ such that $i$ and $i'$ are in the same $S_j$ and $0 < \beta_{i'} < 1$. Then $F$ contains the line segment between $\beta-\varepsilon (\bfe_i-\bfe_{i'})$ and $\beta+\varepsilon (\bfe_i-\bfe_{i'})$ for sufficiently small $\varepsilon > 0$, and so does $\pP$.
    By taking $\varepsilon < \min\{\beta_i,1-\beta_i,\beta_{i'},1-\beta_{i'}\}$, the line segment is also contained in $\pQ$, contradicting that $\beta$ is a vertex of $\pP \cap \pQ$.
    Therefore, $\pP \cap \pQ$ is a 0/1-polytope.

    We then show that the vertex set $\cB$ of $\pP \cap \pQ$ satisfies the basis exchange property. Let $\beta, \beta' \in \cB$ and $i\in[n]$ with $\beta_i > \beta'_i$. Then there is $j\in[n]$ with $\beta_j < \beta'_j$ such that $\beta-\bfe_i+\bfe_j \in \pP$ because $\pP$ is a translate of the basis polytope of $J$. Since $\beta,\beta' \in \pQ$, we have $\beta_i = 1$ and $\beta_j = 0$. Therefore, $\beta-\bfe_i+\bfe_j$ is in $\pP \cap \pQ$ and hence is in $\cB$.
    This implies that $\pP \cap \pQ$ is the basis polytope of a matroid $([n],\cB)$.
    \qedhere

\end{proof}

\begin{ex}
As we shall see in the proof of Theorem~\ref{thm: Maurer homotopy theorem for polymatroids}, by intersecting a polymatroid polytope with translates of the unit cube, we make the polymatroid a polyhedral complex; 
see Figures~\ref{fig: polymatroid decomposition1} and~\ref{fig: polymatroid decomposition2} for two examples. 
\end{ex}

\begin{figure}
    \centering 
    \tdplotsetmaincoords{38}{115}
    \begin{tikzpicture}
        [tdplot_main_coords,
                front/.style={line width=0.5mm,gray},
                back/.style={line width=0.5mm,dashed,gray},
                subdivfront/.style={very thin,blue!50},
                subdivback/.style={very thin,dashed,blue!50},
                axis/.style={->,very thin,gray},
            ]

        \begin{scope}[scale=2.7]
\draw[axis] (-0.0,0,0) -- (2.5,0,0) node[below] {$x$};
            \draw[axis] (0,-0.0,0) -- (0,2.5,0) node[right] {$y$};
            \draw[axis] (0,0,-0.0) -- (0,0,2.5) node[above] {$z$};

\draw[front, fill=blue!10]
                (2,1,0)--(1,2,0)--(0,2,1)--(0,1,2)--(1,0,2)--(2,0,1)--cycle
                ;

            \draw[front]
                (2,1,0)--(0,1,2)
                (1,2,0)--(1,0,2)
                (0,2,1)--(2,0,1)
                ;

\draw[subdivback] 
(0,0,0)--(2,0,0)
                (0,1,0)--(2,1,0)
(0,0,1)--(2,0,1)
                (0,1,1)--(2,1,1)
(0,0,0)--(0,2,0)
                (1,0,0)--(1,2,0)
(0,0,1)--(0,2,1)
                (1,0,1)--(1,2,1)
(0,0,0)--(0,0,2)
                (1,0,0)--(1,0,2)
(0,1,0)--(0,1,2)
                (1,1,0)--(1,1,2)
;

            \draw[subdivfront]
(0,2,0)--(2,2,0)
                (0,2,1)--(2,2,1)
                (0,0,2)--(2,0,2)
                (0,1,2)--(2,1,2)
                (0,2,2)--(2,2,2)
(2,0,0)--(2,2,0)
                (2,0,1)--(2,2,1)
                (0,0,2)--(0,2,2)
                (1,0,2)--(1,2,2)
                (2,0,2)--(2,2,2)
(2,0,0)--(2,0,2)
                (2,1,0)--(2,1,2)
                (0,2,0)--(0,2,2)
                (1,2,0)--(1,2,2)
                (2,2,0)--(2,2,2)
                ;

            \node at (2,1,0) {\bf  210};
            \node at (1,2,0) {\bf  120};
            \node at (0,2,1) {\bf  021};
            \node at (0,1,2) {\bf  012};
            \node at (1,0,2) {\bf  102};
            \node at (2,0,1) {\bf  201};
            \node at (1,1,1) {\bf  111};

        \end{scope}
    \end{tikzpicture}
    \caption{The basis polytope $\pP_J$ of the polymatroid
    \[J=\{(1,1,1)+\bfe_i-\bfe_j\colon i,j\in[3]\}\]
    is a hexagon in the hyperplane $x+y+z=3$. 
    Intersecting $\pP_J$ with integer translates of the unit cube $[0,1]^3$ decomposes the hexagon into six triangles. Three of these triangles are translates of the basis polytope of $U_{1,3}$, while the other three are translates of the basis polytope of $U_{2,3}$. }
    \label{fig: polymatroid decomposition1}
\end{figure}

\begin{figure}
    \centering
    \tdplotsetmaincoords{55}{90}
    \begin{tikzpicture}
            [tdplot_main_coords,
                front/.style={thick,gray},
                back/.style={thick,dashed,gray},
                subdivfront/.style={very thin,blue},
                subdivback/.style={very thin,dashed,blue},
                axis/.style={->,very thin,gray},
            ]

        \begin{scope}[scale=3.0]

\coordinate (2222) at (0,0,0);
            
\coordinate (2011) at (1,0,0);
            \coordinate (0211) at (-1,0,0);

\coordinate (1120) at (0,1,0);
            \coordinate (1102) at (0,-1,0);

\coordinate (2110) at (1/2,1/2,1/1.4142);
            \coordinate (0112) at (-1/2,-1/2,-1/1.4142);

\coordinate (2101) at (1/2,-1/2,1/1.4142);
            \coordinate (0121) at (-1/2,1/2,-1/1.4142);

\coordinate (1210) at (-1/2,1/2,1/1.4142);
            \coordinate (1012) at (1/2,-1/2,-1/1.4142);

\coordinate (1201) at (-1/2,-1/2,1/1.4142);
            \coordinate (1021) at (1/2,1/2,-1/1.4142);

            \fill[color=blue!23] 
                (2011)--(2110)--(1120)--(1021)--cycle
                ;
            \fill[color=blue!14] 
                (1102)--(2101)--(2011)--(1012)--cycle
                ;
            \fill[color=blue!10] 
                (2101)--(2110)--(1210)--(1201)--cycle
                ;
            \fill[color=blue!12] 
                (2011)--(2110)--(2101)--cycle
                ;
            \fill[color=blue!20] 
                (1120)--(1210)--(2110)--cycle
                ;
            \fill[color=blue!5] 
                (1102)--(2101)--(1201)--cycle
                ;
            \fill[color=blue!26] 
                (2011)--(1021)--(1012)--cycle
                ;
            
\draw[front]
                (2011)--(2110)--(1120)--(1021)--cycle
(1102)--(2101)--(2011)--(1012)--cycle
                (2101)--(2110)--(1210)--(1201)--cycle
;

            \draw[front]
                (1012)--(1021)
                (1120)--(1210)
                (1201)--(1102)
                ;

            \draw[back]
                (1120)--(1210)--(0211)--(0121)--cycle
                (0211)--(1201)--(1102)--(0112)--cycle
                (1012)--(1021)--(0121)--(0112)--cycle
                ;

            \node at (2011) {\bf \small 2011};
            \node at (0211) {\bf \small 0211};
            \node at (1120) {\bf \small 1120};
            \node at (1102) {\bf \small 1102};
            \node at (2110) {\bf \small 2110};
            \node at (0112) {\bf \small 0112};
            \node at (2101) {\bf \small 2101};
            \node at (0121) {\bf \small 0121};
            \node at (1210) {\bf \small 1210};
            \node at (1012) {\bf \small 1012};
            \node at (1201) {\bf \small 1201};
            \node at (1021) {\bf \small 1021};

            \node at (0,0,-1.30) {$P_J$};
        \end{scope}

    \end{tikzpicture}

    \vspace{0.3cm}

    \begin{tikzpicture}
        [tdplot_main_coords,
            front/.style={thick,gray},
            back/.style={thick,dashed,gray},
            subdivfront/.style={very thin,blue},
            subdivback/.style={very thin,dashed,blue},
            axis/.style={->,very thin,gray},
        ]

\begin{scope}[scale=2.0, xshift=4.1cm, yshift=0cm]
\begin{scope}[]
\coordinate (2222) at (0,0,0);
                
\coordinate (2011) at (1,0,0);
                \coordinate (0211) at (-1,0,0);

\coordinate (1120) at (0,1,0);
                \coordinate (1102) at (0,-1,0);

\coordinate (2110) at (1/2,1/2,1/1.4142);
                \coordinate (0112) at (-1/2,-1/2,-1/1.4142);

\coordinate (2101) at (1/2,-1/2,1/1.4142);
                \coordinate (0121) at (-1/2,1/2,-1/1.4142);

\coordinate (1210) at (-1/2,1/2,1/1.4142);
                \coordinate (1012) at (1/2,-1/2,-1/1.4142);

\coordinate (1201) at (-1/2,-1/2,1/1.4142);
                \coordinate (1021) at (1/2,1/2,-1/1.4142);

                \fill[blue!5]
                    (2222)--(0211)--(0112)--cycle
                    ;
                \fill[blue!20]
                    (2222)--(0121)--(0211)--cycle
                    ;
                \fill[blue!26]
                    (2222)--(0121)--(0112)--cycle
                    ;

                \draw[front] 
                    (0211)--(0112)--(0121)--cycle
                    ;

                \draw[front]
                    (2222)--(0211)
                    (2222)--(0112)
                    (2222)--(0121)
                    ;
            \end{scope}

\begin{scope}[yshift=-0.72cm]
\coordinate (2222) at (0,0,0);
                
\coordinate (2011) at (1,0,0);
                \coordinate (0211) at (-1,0,0);

\coordinate (1120) at (0,1,0);
                \coordinate (1102) at (0,-1,0);

\coordinate (2110) at (1/2,1/2,1/1.4142);
                \coordinate (0112) at (-1/2,-1/2,-1/1.4142);

\coordinate (2101) at (1/2,-1/2,1/1.4142);
                \coordinate (0121) at (-1/2,1/2,-1/1.4142);

\coordinate (1210) at (-1/2,1/2,1/1.4142);
                \coordinate (1012) at (1/2,-1/2,-1/1.4142);

\coordinate (1201) at (-1/2,-1/2,1/1.4142);
                \coordinate (1021) at (1/2,1/2,-1/1.4142);

                \fill[color=blue!12]
                    (2222)--(1012)--(1021)--cycle
                    ;
                \fill[color=blue!5]
                    (2222)--(0112)--(1012)--cycle
                    ;
                \fill[color=blue!20]
                    (2222)--(1021)--(0121)--cycle
                    ;

                \draw[front] 
                    (1012)--(1021)--(0121)
                    (0112)--(1012)
                    ;
                \draw[back]
                    (0121)--(0112)
                    ;

                \draw[front]
                    (2222)--(1012)
                    (2222)--(1021)
                    (2222)--(0121)
                    (2222)--(0112)
                    ;
            \end{scope}

\begin{scope}[xshift=0.62354cm, yshift=0.36cm]
\coordinate (2222) at (0,0,0);
                
\coordinate (2011) at (1,0,0);
                \coordinate (0211) at (-1,0,0);

\coordinate (1120) at (0,1,0);
                \coordinate (1102) at (0,-1,0);

\coordinate (2110) at (1/2,1/2,1/1.4142);
                \coordinate (0112) at (-1/2,-1/2,-1/1.4142);

\coordinate (2101) at (1/2,-1/2,1/1.4142);
                \coordinate (0121) at (-1/2,1/2,-1/1.4142);

\coordinate (1210) at (-1/2,1/2,1/1.4142);
                \coordinate (1012) at (1/2,-1/2,-1/1.4142);

\coordinate (1201) at (-1/2,-1/2,1/1.4142);
                \coordinate (1021) at (1/2,1/2,-1/1.4142);

                \fill[color=blue!12]
                    (2222)--(1120)--(1210)--cycle
                    ;
                \fill[color=blue!5]
                    (2222)--(1210)--(0211)--cycle
                    ;
                \fill[color=blue!26]
                    (2222)--(0121)--(1120)--cycle
                    ;

                \draw[front] 
                    (1120)--(1210)--(0211)
                    (0121)--(1120)
                    ;
                \draw[back]
                    (0211)--(0121)
                    ;

                \draw[front]
                    (2222)--(1120)
                    (2222)--(1210)
                    (2222)--(0211)
                    (2222)--(0121)
                    ;
            \end{scope}

\begin{scope}[xshift=-0.62354cm, yshift=0.36cm]
\coordinate (2222) at (0,0,0);
                
\coordinate (2011) at (1,0,0);
                \coordinate (0211) at (-1,0,0);

\coordinate (1120) at (0,1,0);
                \coordinate (1102) at (0,-1,0);

\coordinate (2110) at (1/2,1/2,1/1.4142);
                \coordinate (0112) at (-1/2,-1/2,-1/1.4142);

\coordinate (2101) at (1/2,-1/2,1/1.4142);
                \coordinate (0121) at (-1/2,1/2,-1/1.4142);

\coordinate (1210) at (-1/2,1/2,1/1.4142);
                \coordinate (1012) at (1/2,-1/2,-1/1.4142);

\coordinate (1201) at (-1/2,-1/2,1/1.4142);
                \coordinate (1021) at (1/2,1/2,-1/1.4142);

                \fill[color=blue!12]
                    (2222)--(1201)--(1102)--cycle
                    ;
                \fill[color=blue!20]
                    (2222)--(1201)--(0211)--cycle
                    ;
                \fill[color=blue!26]
                    (2222)--(1102)--(0112)--cycle
                    ;

                \draw[front] 
                    (0211)--(1201)--(1102)--(0112)
                    ;
                \draw[back]
                    (0112)--(0211)
                    ;

                \draw[front]
(2222)--(0211)
                    (2222)--(1201)
                    (2222)--(1102)
                    (2222)--(0112)
                    ;
            \end{scope}

\begin{scope}[yshift=0.72cm]
\coordinate (2222) at (0,0,0);
                
\coordinate (2011) at (1,0,0);
                \coordinate (0211) at (-1,0,0);

\coordinate (1120) at (0,1,0);
                \coordinate (1102) at (0,-1,0);

\coordinate (2110) at (1/2,1/2,1/1.4142);
                \coordinate (0112) at (-1/2,-1/2,-1/1.4142);

\coordinate (2101) at (1/2,-1/2,1/1.4142);
                \coordinate (0121) at (-1/2,1/2,-1/1.4142);

\coordinate (1210) at (-1/2,1/2,1/1.4142);
                \coordinate (1012) at (1/2,-1/2,-1/1.4142);

\coordinate (1201) at (-1/2,-1/2,1/1.4142);
                \coordinate (1021) at (1/2,1/2,-1/1.4142);

                \draw[front, fill=blue!12]
                    (2222)--(1210)--(1201)--cycle
                    ;

                \draw[back]
                    (0211)--(2222)
                    (0211)--(1210)
                    (0211)--(1201)
                    ;

\end{scope}

\begin{scope}[xshift=0.62354cm, yshift=-0.36cm]
\coordinate (2222) at (0,0,0);
                
\coordinate (2011) at (1,0,0);
                \coordinate (0211) at (-1,0,0);

\coordinate (1120) at (0,1,0);
                \coordinate (1102) at (0,-1,0);

\coordinate (2110) at (1/2,1/2,1/1.4142);
                \coordinate (0112) at (-1/2,-1/2,-1/1.4142);

\coordinate (2101) at (1/2,-1/2,1/1.4142);
                \coordinate (0121) at (-1/2,1/2,-1/1.4142);

\coordinate (1210) at (-1/2,1/2,1/1.4142);
                \coordinate (1012) at (1/2,-1/2,-1/1.4142);

\coordinate (1201) at (-1/2,-1/2,1/1.4142);
                \coordinate (1021) at (1/2,1/2,-1/1.4142);

                \draw[front, fill=blue!12]
                    (2222)--(1021)--(1120)--cycle
                    ;

                \draw[back]
                    (0121)--(2222)
                    (0121)--(1021)
                    (0121)--(1120)
                    ;

\end{scope}

\begin{scope}[xshift=-0.62354cm, yshift=-0.36cm]
\coordinate (2222) at (0,0,0);
                
\coordinate (2011) at (1,0,0);
                \coordinate (0211) at (-1,0,0);

\coordinate (1120) at (0,1,0);
                \coordinate (1102) at (0,-1,0);

\coordinate (2110) at (1/2,1/2,1/1.4142);
                \coordinate (0112) at (-1/2,-1/2,-1/1.4142);

\coordinate (2101) at (1/2,-1/2,1/1.4142);
                \coordinate (0121) at (-1/2,1/2,-1/1.4142);

\coordinate (1210) at (-1/2,1/2,1/1.4142);
                \coordinate (1012) at (1/2,-1/2,-1/1.4142);

\coordinate (1201) at (-1/2,-1/2,1/1.4142);
                \coordinate (1021) at (1/2,1/2,-1/1.4142);

                \draw[front, fill=blue!12]
                    (2222)--(1102)--(1012)--cycle
                    ;

                \draw[back]
                    (0112)--(2222)
                    (0112)--(1102)
                    (0112)--(1012)
                    ;

\end{scope}

            \node at (0,0,-2.25) {Cells behind};
        \end{scope}

\begin{scope}[scale=2.0, xshift=0.0cm, yshift=-0.0cm]
\begin{scope}[xshift=0.62354cm, yshift=-0.36cm]
\coordinate (2222) at (0,0,0);
                
\coordinate (2011) at (1,0,0);
                \coordinate (0211) at (-1,0,0);

\coordinate (1120) at (0,1,0);
                \coordinate (1102) at (0,-1,0);

\coordinate (2110) at (1/2,1/2,1/1.4142);
                \coordinate (0112) at (-1/2,-1/2,-1/1.4142);

\coordinate (2101) at (1/2,-1/2,1/1.4142);
                \coordinate (0121) at (-1/2,1/2,-1/1.4142);

\coordinate (1210) at (-1/2,1/2,1/1.4142);
                \coordinate (1012) at (1/2,-1/2,-1/1.4142);

\coordinate (1201) at (-1/2,-1/2,1/1.4142);
                \coordinate (1021) at (1/2,1/2,-1/1.4142);

                \draw[front, fill=blue!23] 
                    (2011)--(2110)--(1120)--(1021)--cycle
                    ;
                \draw[front, fill=blue!5] 
                    (2222)--(2011)--(2110)--cycle
                    ;

                \draw[front]
                    (2222)--(2011)
                    (2222)--(2110)
                    ;
                \draw[back]
                    (2222)--(1120)
                    (2222)--(1021)
                    ;
            \end{scope}

\begin{scope}[xshift=-0.62354cm, yshift=-0.36cm]
\coordinate (2222) at (0,0,0);
                
\coordinate (2011) at (1,0,0);
                \coordinate (0211) at (-1,0,0);

\coordinate (1120) at (0,1,0);
                \coordinate (1102) at (0,-1,0);

\coordinate (2110) at (1/2,1/2,1/1.4142);
                \coordinate (0112) at (-1/2,-1/2,-1/1.4142);

\coordinate (2101) at (1/2,-1/2,1/1.4142);
                \coordinate (0121) at (-1/2,1/2,-1/1.4142);

\coordinate (1210) at (-1/2,1/2,1/1.4142);
                \coordinate (1012) at (1/2,-1/2,-1/1.4142);

\coordinate (1201) at (-1/2,-1/2,1/1.4142);
                \coordinate (1021) at (1/2,1/2,-1/1.4142);

                \draw[front, fill=blue!14] 
                    (1102)--(2101)--(2011)--(1012)--cycle
                    ;
                \draw[front, fill=blue!20] 
                    (2222)--(2101)--(2011)--cycle
                    ;

                \draw[front]
(2222)--(2101)
                    (2222)--(2011)
;
                \draw[back]
                    (2222)--(1102)
                    (2222)--(1012)
                    ;
            \end{scope}

\begin{scope}[yshift=0.72cm]
\coordinate (2222) at (0,0,0);
                
\coordinate (2011) at (1,0,0);
                \coordinate (0211) at (-1,0,0);

\coordinate (1120) at (0,1,0);
                \coordinate (1102) at (0,-1,0);

\coordinate (2110) at (1/2,1/2,1/1.4142);
                \coordinate (0112) at (-1/2,-1/2,-1/1.4142);

\coordinate (2101) at (1/2,-1/2,1/1.4142);
                \coordinate (0121) at (-1/2,1/2,-1/1.4142);

\coordinate (1210) at (-1/2,1/2,1/1.4142);
                \coordinate (1012) at (1/2,-1/2,-1/1.4142);

\coordinate (1201) at (-1/2,-1/2,1/1.4142);
                \coordinate (1021) at (1/2,1/2,-1/1.4142);

                \draw[front, fill=blue!10] 
                    (2101)--(2110)--(1210)--(1201)--cycle
                    ;
                \draw[front, fill=blue!26] 
                    (2222)--(2110)--(2101)--cycle
                    ;

                \draw[front]
                    (2222)--(2101)
                    (2222)--(2110)
                    ;
                \draw[back]
                    (2222)--(1210)
                    (2222)--(1201)
                    ;
            \end{scope}

\begin{scope}[xshift=0.62354cm, yshift=0.36cm]
\coordinate (2222) at (0,0,0);
                
\coordinate (2011) at (1,0,0);
                \coordinate (0211) at (-1,0,0);

\coordinate (1120) at (0,1,0);
                \coordinate (1102) at (0,-1,0);

\coordinate (2110) at (1/2,1/2,1/1.4142);
                \coordinate (0112) at (-1/2,-1/2,-1/1.4142);

\coordinate (2101) at (1/2,-1/2,1/1.4142);
                \coordinate (0121) at (-1/2,1/2,-1/1.4142);

\coordinate (1210) at (-1/2,1/2,1/1.4142);
                \coordinate (1012) at (1/2,-1/2,-1/1.4142);

\coordinate (1201) at (-1/2,-1/2,1/1.4142);
                \coordinate (1021) at (1/2,1/2,-1/1.4142);

                \draw[front, fill=blue!20] 
                    (1120)--(1210)--(2110)--cycle
                    ;
                \draw[front, fill=blue!5] 
                    (2222)--(2110)--(1210)--cycle
                    ;
                \draw[front, fill=blue!26] 
                    (2222)--(2110)--(1120)--cycle
                    ;

                \draw[front]
                    (2222)--(1120)
                    (2222)--(1210)
                    (2222)--(2110)
                    ;
            \end{scope}

\begin{scope}[xshift=-0.62354cm, yshift=0.36cm]
\coordinate (2222) at (0,0,0);
                
\coordinate (2011) at (1,0,0);
                \coordinate (0211) at (-1,0,0);

\coordinate (1120) at (0,1,0);
                \coordinate (1102) at (0,-1,0);

\coordinate (2110) at (1/2,1/2,1/1.4142);
                \coordinate (0112) at (-1/2,-1/2,-1/1.4142);

\coordinate (2101) at (1/2,-1/2,1/1.4142);
                \coordinate (0121) at (-1/2,1/2,-1/1.4142);

\coordinate (1210) at (-1/2,1/2,1/1.4142);
                \coordinate (1012) at (1/2,-1/2,-1/1.4142);

\coordinate (1201) at (-1/2,-1/2,1/1.4142);
                \coordinate (1021) at (1/2,1/2,-1/1.4142);

                \draw[front, fill=blue!5] 
                    (1102)--(2101)--(1201)--cycle
                    ;
                \draw[front, fill=blue!20] 
                    (2222)--(2101)--(1201)--cycle
                    ;
                \draw[front, fill=blue!26] 
                    (2222)--(2101)--(1102)--cycle
                    ;

                \draw[front]
                    (2222)--(1102)
                    (2222)--(2101)
                    (2222)--(1201)
                    ;
            \end{scope}

\begin{scope}[yshift=-0.72cm]
\coordinate (2222) at (0,0,0);
                
\coordinate (2011) at (1,0,0);
                \coordinate (0211) at (-1,0,0);

\coordinate (1120) at (0,1,0);
                \coordinate (1102) at (0,-1,0);

\coordinate (2110) at (1/2,1/2,1/1.4142);
                \coordinate (0112) at (-1/2,-1/2,-1/1.4142);

\coordinate (2101) at (1/2,-1/2,1/1.4142);
                \coordinate (0121) at (-1/2,1/2,-1/1.4142);

\coordinate (1210) at (-1/2,1/2,1/1.4142);
                \coordinate (1012) at (1/2,-1/2,-1/1.4142);

\coordinate (1201) at (-1/2,-1/2,1/1.4142);
                \coordinate (1021) at (1/2,1/2,-1/1.4142);

                \draw[front, fill=blue!26] 
                    (2011)--(1021)--(1012)--cycle
                    ;
                \draw[front, fill=blue!5] 
                    (2222)--(1012)--(2011)--cycle
                    ;
                \draw[front, fill=blue!20] 
                    (2222)--(1021)--(2011)--cycle
                    ;

                \draw[front]
                    (2222)--(2011)
                    (2222)--(1021)
                    (2222)--(1012)
                    ;
            \end{scope}

\begin{scope}[]
\coordinate (2222) at (0,0,0);
                
\coordinate (2011) at (1,0,0);
                \coordinate (0211) at (-1,0,0);

\coordinate (1120) at (0,1,0);
                \coordinate (1102) at (0,-1,0);

\coordinate (2110) at (1/2,1/2,1/1.4142);
                \coordinate (0112) at (-1/2,-1/2,-1/1.4142);

\coordinate (2101) at (1/2,-1/2,1/1.4142);
                \coordinate (0121) at (-1/2,1/2,-1/1.4142);

\coordinate (1210) at (-1/2,1/2,1/1.4142);
                \coordinate (1012) at (1/2,-1/2,-1/1.4142);

\coordinate (1201) at (-1/2,-1/2,1/1.4142);
                \coordinate (1021) at (1/2,1/2,-1/1.4142);

                \draw[front, fill=blue!12] 
                    (2011)--(2110)--(2101)--cycle
                    ;

                \draw[back]
                    (2222)--(2011)
                    (2222)--(2110)
                    (2222)--(2101)
                    ;
            \end{scope}

            \node at (0,0,-2.25) {Cells in front};
        \end{scope}
    \end{tikzpicture}
    \caption{The basis polytope $\pP_J$ of the polymatroid$$J = \{(1,1,1,1) + \bfe_i - \bfe_j \colon i,j\in[4]\}.$$Intersecting $\pP_J$ with integer translates of $[0,1]^4$ decomposes the polytope into eight tetrahedra and six pyramids. The tetrahedra are translates of the basis polytopes of $U_{1,4}$ and $U_{3,4}$, and the pyramids are translates of the basis polytope of $U_{2,4}^-$ (where $U_{2,4}^-$ is obtained from $U_{2,4}$ by removing a basis).} 
    \label{fig: polymatroid decomposition2}
\end{figure}

The following lemma describes a simple criterion for a collection of polytopes to form a polyhedral complex.

\begin{lem}\label{lem: simple description of polyhedral complex}
    Let $\pP_1,\cdots,\pP_m$ be polytopes in $\bR^n$ and 
let $\pC$ be the set of all faces of $\pP_1,\cdots,\pP_m$. 
    If $\pP_i \cap \pP_j$ is a common face of $\pP_i$ and $\pP_j$ for any $i\ne j$, then $\pC$ is a polyhedral complex.
\end{lem}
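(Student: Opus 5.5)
The plan is to verify the two conditions of Definition~\ref{def: polyhedral complex} directly for $\pC$. Condition~(1) is immediate, because a face of a face of $\pP_i$ is again a face of $\pP_i$, so every face of a member of $\pC$ lies in $\pC$. Nonemptiness is clear as long as $m\ge 1$. The work therefore lies in condition~(2). Take $\mathsf{F}\in\pC$ a face of $\pP_i$ and $\mathsf{G}\in\pC$ a face of $\pP_j$. We must show that $\mathsf{F}\cap\mathsf{G}$ is empty or a common face of $\mathsf{F}$ and $\mathsf{G}$.

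The first case is $i=j$. Both $\mathsf{F}$ and $\mathsf{G}$ are faces of the single polytope $\pP_i$, and the intersection of two faces of a polytope is a face of that polytope (or empty). This is standard: if $\mathsf{F}$ and $\mathsf{G}$ are the maximizers of linear functionals $\ell_1$ and $\ell_2$, then $\mathsf{F}\cap\mathsf{G}$ is the maximizer set of $\ell_1+\ell_2$ whenever it is nonempty. A face of $\pP_i$ contained in $\mathsf{F}$ is a face of $\mathsf{F}$, so $\mathsf{F}\cap\mathsf{G}$ is a face of both $\mathsf{F}$ and $\mathsf{G}$.

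The second case is $i\ne j$, and here the hypothesis enters. Let $\mathsf{H}:=\pP_i\cap\pP_j$, which is a common face of $\pP_i$ and $\pP_j$ by assumption. Since $\mathsf{F}\subseteq\pP_i$ and $\mathsf{G}\subseteq\pP_j$, we have
\[
\mathsf{F}\cap\mathsf{G}=(\mathsf{F}\cap\mathsf{H})\cap(\mathsf{G}\cap\mathsf{H}).
\]
By the first case applied inside $\pP_i$, $\mathsf{F}\cap\mathsf{H}$ is a face of $\pP_i$ (or empty), and since it lies in $\mathsf{H}$ it is a face of $\mathsf{H}$. Likewise $\mathsf{G}\cap\mathsf{H}$ is a face of $\pP_j$ contained in $\mathsf{H}$, hence a face of $\mathsf{H}$. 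Applying the first case again inside the polytope $\mathsf{H}$, the intersection $\mathsf{F}\cap\mathsf{G}$ is a face of $\mathsf{H}$ (or empty).

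It remains to lift this back to $\mathsf{F}$ and $\mathsf{G}$, using transitivity of faces: a face of a face of a polytope is a face of that polytope. Then $\mathsf{F}\cap\mathsf{G}$ is a face of $\mathsf{H}$, hence a face of $\pP_i$. Being contained in $\mathsf{F}$, it is a face of $\mathsf{F}$, and symmetrically it is a face of $\mathsf{G}$.

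The only delicate point is this bookkeeping with faces. We rely on three standard facts, each provable by perturbing linear functionals:
\begin{enumerate}
\item faces of faces are faces;
\item intersections of faces are faces;
\item a face of $\pP$ contained in a face $\mathsf{F}$ is a face of $\mathsf{F}$.
\end{enumerate}
These are in \cite{Ziegler1995}, and we would cite them there rather than reprove them.
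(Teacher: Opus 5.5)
Your proof is correct and follows the same route as the paper: set $\mathsf{H}=\pP_i\cap\pP_j$, note that $\mathsf{F}\cap\mathsf{H}$ and $\mathsf{G}\cap\mathsf{H}$ are faces of $\mathsf{H}$, and conclude that $\mathsf{F}\cap\mathsf{G}$ is a face of $\mathsf{H}$, hence of $\pP_i$ and of $\pP_j$. Your explicit last step (a face of $\pP_i$ contained in $\mathsf{F}$ is a face of $\mathsf{F}$) spells out what the paper leaves implicit when it only checks $\mathsf{F}\cap\mathsf{G}\in\pC$, and your separate case $i=j$ is already covered by the paper's single argument, since $\pP_i\cap\pP_i=\pP_i$.
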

\begin{proof}
    Let $F_1$ and $F_2$ be in $\pC$. 
    To prove that $\pC$ is a polyhedral complex, it suffices to show that $F_1 \cap F_2$ is also in $\pC$ whenever $F_1 \cap F_2 \ne \emptyset$.

    By definition, $F_1$ and $F_2$ are faces of $\pP_i$ and $\pP_j$ for some $i,j\in[m]$, respectively.
    Let $\pP = \pP_i \cap \pP_j$. Then $F_1$ and $\pP$ are faces of $\pP_i$, and so is $F_1 \cap \pP$. Hence, $F_1 \cap \pP$ is a face of $\pP$. Similarly, $F_2 \cap \pP$ is a face of $\pP$.
    Therefore, $F_1 \cap F_2 = F_1\cap F_2 \cap \pP$ is a face of $\pP$, implying that $F_1 \cap F_2 \in \pC$.
\end{proof}

\begin{thm}[Theorem~\ref{thm: polymatroid homotopy theorem}]\label{thm: Maurer homotopy theorem for polymatroids}
    Any two walks with the same end-points in the basis graph $G_J$ of a polymatroid $J$ are combinatorially homotopic.
\end{thm}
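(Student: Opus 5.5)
We plan to apply Proposition~\ref{prop: main2} to the polyhedral complex obtained by cutting $\pP_J$ with integer translates of the unit cube $\pQ=[0,1]^n$. Let $\pP_1,\dots,\pP_m$ be the nonempty intersections $\pP_J\cap(\pQ+\alpha)$ with $\alpha\in\bZ_{\ge0}^n$. Only finitely many are nonempty, since $\pP_J\subseteq\bR_{\ge0}^n$ is bounded. By Lemma~\ref{lem: decomposition of polymatroid polytope}, each $\pP_i$ is a translate $\pP_{M_i}+\alpha_i$ of the basis polytope of a matroid $M_i$. Let $\pC$ be the set of all faces of the $\pP_i$.

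\textbf{Step 1: $\pC$ is a polyhedral complex.} We use Lemma~\ref{lem: simple description of polyhedral complex}. For two cubes $\pQ+\alpha$ and $\pQ+\beta$, their intersection is a common face of both: it is the set where $x_k=\max(\alpha_k,\beta_k)$ for the coordinates with $|\alpha_k-\beta_k|=1$ (or it is empty). Such a face is cut out by a linear functional that is maximized on the face in one cube, and another functional doing the same in the other cube. Hence $\pP_i\cap\pP_j=\pP_J\cap(\pQ+\alpha)\cap(\pQ+\beta)$ equals $\pP_i\cap H$, where $H$ is the face of $\pQ+\alpha$ above. A face of a cube intersected with $\pP_J$ is a face of $\pP_J\cap(\pQ+\alpha)$, because the supporting functional of the cube face is still maximized there on the intersection. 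Symmetrically, it is a face of $\pP_j$. I expect this bookkeeping to be routine.

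\textbf{Step 2: the underlying space and the $2$-faces.} The union of the faces in $\pC$ is exactly $\pP_J$, because the translated cubes cover $\bR^n_{\ge0}$. So the union is convex, hence contractible and in particular simply connected. Every $2$-face of $\pC$ is a $2$-face of some translated matroid polytope $\pP_{M_i}+\alpha_i$, so by Lemma~\ref{lem: 2-face} it is a triangle or a square.

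\textbf{Step 3: the graph of $\pC$ is $G_J$.} This is the main point needing care. For the vertices: each vertex of a cell $\pP_i$ is integral and lies in $\pP_J$, so it is in $\cB$ by Theorem~\ref{thm: integer points of polymatroid polytope}. Conversely, any $\beta\in\cB$ lies in the cell with $\alpha=\beta$, as the vertex $\beta=\bfe_\emptyset+\beta$; so every basis is a vertex of $\pC$. For the edges: by Theorem~\ref{thm:GGMS} applied to $M_i$, every edge of $\pC$ is a translate of $\bfe_k-\bfe_l$ joining two bases, so its endpoints are adjacent in $G_J$. Conversely, suppose $\beta,\beta'=\beta-\bfe_k+\bfe_l\in\cB$. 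Choose $\alpha$ with $\alpha_l=\beta_l$, $\alpha_k=\beta_k-1$, and $\alpha_t=\beta_t$ otherwise. Then both $\beta-\alpha$ and $\beta'-\alpha$ are $0/1$-vectors differing by a single exchange, so they are bases of the matroid $M$ with $\pP_M+\alpha=\pP_J\cap(\pQ+\alpha)$. By Lemma~\ref{lem: 1-skeleton} they are adjacent in $G_M$, so the segment $\beta\beta'$ is an edge of that cell and hence of $\pC$.

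With Steps 1 to 3 in place, Proposition~\ref{prop: main2} gives the theorem directly. I expect the main obstacle to be Step 1, verifying that the intersections really are common faces, while Step 3 is checked coordinatewise as above.
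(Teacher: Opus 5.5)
Your proposal is correct and follows the paper's proof essentially step for step. You cut $\pP_J$ by integer translates of the unit cube and apply Lemma~\ref{lem: simple description of polyhedral complex} to get a polyhedral complex. You identify its graph with $G_J$ via Theorem~\ref{thm: integer points of polymatroid polytope} and Lemma~\ref{lem: 1-skeleton}, get triangles and squares from Lemma~\ref{lem: 2-face}, and conclude with Proposition~\ref{prop: main2}. You are slightly more careful than the paper in two places: you verify the simple-connectivity hypothesis of Proposition~\ref{prop: main2} (the union of the cells is the convex polytope $\pP_J$), and you name an explicit cube containing each edge $\beta\beta'$, both of which the paper leaves implicit.
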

\begin{proof}
Consider the non-empty polytopes $\pR_{\alpha} = \pP_J \cap (\pQ+\alpha)$, where $\alpha\in\bZ_{\ge0}^n$.
Let $\pC$ be the set of all faces of such polytopes. 

We first show that $\pC$ is a polyhedral complex.
By Lemma~\ref{lem: simple description of polyhedral complex}, it suffices to show that $\pR_\alpha \cap \pR_{\alpha'}$ is a common face of $\pR_\alpha$ and $\pR_{\alpha'}$ for any $\alpha\ne \alpha'$.
Note that $\pR_\alpha \cap \pR_{\alpha'} = \pR_\alpha \cap (\pQ+\alpha')$ is the intersection of $\pR_i$ and hyperplanes $\{x_k=t_k\}$ for some $k\in[n]$ and $t_i\in\bZ$. Then one can easily take a linear functional showing that $\pR_\alpha\cap\pR_{\alpha'}$ is a face of $\pR_\alpha$. By symmetry, $\pR_\alpha\cap\pR_{\alpha'}$ is also a face of $\pR_{\alpha'}$ and therefore $\pC$ is a polyhedral complex.

We next show that the graph of $\pC$ is $G_J$. 
By definition, the vertices of $G_J$ are exactly the bases of $J$. The vertices of $\pC$ equal the integral points of $\pP_J$, which are exactly the bases of $J$ by Theorem~\ref{thm: integer points of polymatroid polytope}. Thus, the vertex sets of $G_J$ and the graph of $\pC$ are the same.

Suppose that $\beta$ and $\beta'$ are adjacent vertices in $G_J$. Then $\beta-\beta' = \bfe_i-\bfe_j$ for some $i\ne j$. Therefore, there exists an integral translate of the unit cube $[0,1]^n$ containing both $\beta$ and $\beta'$, and hence they are in the same $\pR_\alpha$. By Lemma~\ref{lem: 1-skeleton}, the line segment between $\beta$ and $\beta'$ is a $1$-face of $\pR_{i}$ and hence a $1$-face of $\pC$. 

It remains to show that every $1$-face $L$ of $\pC$ corresponds to an edge of $G_J$. By definition, $L$ is a $1$-face of $\pR_{i}$ for some $i$. Note that $\pR_i$ is a translate of a matroid basis polytope by Lemma~\ref{lem: decomposition of polymatroid polytope}. 
Hence, the $1$-face $L$ is a translate of $\bfe_i-\bfe_j$, which corresponds to an edge $G_J$.
We conclude that $G_J$ is the graph of $\pC$.

The $2$-faces of $\pC$ are the $2$-faces of $\pR_{\alpha}$'s by definition and hence are triangles and squares by Lemma~\ref{lem: 2-face}. By Proposition~\ref{prop: main2}, we obtain the desired result. 
\end{proof}

\section{Complements}

\subsection{A short proof of the Gelfand--Serganova theorem for delta-matroids}\label{sec: GGMS for delta-matroids}

\begin{thm}[Theorem~\ref{thm:GGMS for delta-matroids}(i)]
    Let $\pP$ be a 0/1-polytope. Then $\pP$ is a basis polytope of a delta-matroid if and only if every edge is a translate of a vector of the form $\bfe_i$ for some $i\in [n]$ or of the form $\bfe_i-\bfe_j$ or $\bfe_i+\bfe_j$ for two distinct elements $i,j\in [n]$. 
\end{thm}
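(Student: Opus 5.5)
We prove the two directions separately. For the ``only if'' direction, let $\pP=\pP_M$ for a delta-matroid $M=([n],\cB)$ and let $[B,B']$ be an edge of $\pP$. We show $|B\symdiff B'|\le 2$, which forces the edge direction to be $\pm\bfe_i$ or $\pm\bfe_i\pm\bfe_j$. Choose a linear functional $f$ whose maximum over $\pP$ is attained exactly on $[B,B']$, and perturb it generically so that $f(\bfe_B)=f(\bfe_{B'})$ is still the maximum and $B,B'$ are the only maximizers. Suppose $|B\symdiff B'|\ge 3$. Pick $e\in B\symdiff B'$; the exchange axiom gives $f_1\in B\symdiff B'$ with $B\symdiff\{e,f_1\}\in\cB$, and applied to $(B',B)$ with $e$ it gives $f_2$ with $B'\symdiff\{e,f_2\}\in\cB$. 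Write $c_k=f(\bfe_k)$ signed according to whether $k\in B$ or $k\in B'$. Moving from $B$ toward $B'$ along an element of $B\symdiff B'$ changes $f$ by the same amount as moving from $B'$ toward $B$ along it, with the opposite sign. So the changes $\delta_B$ and $\delta_{B'}$ satisfy a relation of the form $\delta_B(\{e,f_1\})+\delta_{B'}(\{e,f_2\})=\cdots$. Maximality forces each $\delta\le0$, and we aim to sum suitable exchanges to a contradiction.

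A cleaner route: define $w_k=f(\bfe_k)$ for $k\in B'\setminus B$ and $w_k=-f(\bfe_k)$ for $k\in B\setminus B'$, so that moving from $B$ by toggling a set $S\subseteq B\symdiff B'$ changes $f$ by $w(S)$. Then $w(B\symdiff B')=0$ and, by maximality and uniqueness, $w(S)<0$ whenever $S\ne\emptyset, B\symdiff B'$ and $B\symdiff S\in\cB$. Also $w(S)>0$ fails, by symmetry from $B'$ with the complement. Choose $e$ maximizing $w_e$. The axiom gives $f_1$ with $w(\{e,f_1\})<0$ (or $w_e<0$ if $f_1=e$). The axiom from $B'$ with the same $e$ gives $f_2$ with $B'\symdiff\{e,f_2\}\in\cB$, meaning $w(B\symdiff B'\setminus\{e,f_2\})<0$, i.e.\ $w(\{e,f_2\})>0$ (or $w_e>0$). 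This forces $w_e+w_{f_2}>0$ while $w_e+w_{f_1}<0$. To close the gap, we run the same argument with $e$ chosen as a minimizer of $w$ in the second application, and combine the two using the maximality of $w_e$. Getting this combinatorial inequality to close is the main obstacle. If it does not work directly, the fallback is induction on $|B\symdiff B'|$, using a generic $f$ and restricting to the face maximizing $f$, which is itself a delta-matroid polytope.

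For the ``if'' direction, let $\cB$ be the vertex set of $\pP$; we verify the symmetric exchange axiom. Given $B,B'\in\cB$ and $e\in B\symdiff B'$, consider the cone at $\bfe_B$: $\bfe_{B'}-\bfe_B$ lies in the cone generated by the edge directions at $B$, each of the form $\pm\bfe_i$ or $\pm\bfe_i\pm\bfe_j$. Since $\pP$ is a 0/1-polytope, any edge direction at $B$ has sign pattern dictated by $B$: $+$ on coordinates outside $B$ and $-$ on coordinates inside $B$. The vector $\bfe_{B'}-\bfe_B$ has nonzero $e$-coordinate and is zero off $B\symdiff B'$. Writing it as a nonnegative combination of edge directions, some edge direction $d$ used has nonzero $e$-coordinate of the correct sign. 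We must also ensure its other support lies in $B\symdiff B'$. To do this, replace $\pP$ by its face maximizing a functional that is $+1$ on $B\cap B'$, $-1$ on the complement of $B\cup B'$, and $0$ elsewhere. This face contains $\bfe_B$ and $\bfe_{B'}$, and all of its vertices agree with $B$ outside $B\symdiff B'$. Its edges are edges of $\pP$, so the edge argument applies inside this face. Then $B\symdiff\operatorname{supp}(d)=B\symdiff\{e,f\}$ with $f\in B\symdiff B'$ is a vertex, as required.
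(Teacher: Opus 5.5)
Your ``if'' direction is correct and is essentially the paper's argument: both write $\bfe_{B'}-\bfe_B$ as a nonnegative combination of the edge directions at a vertex. The paper twists so that $A=\emptyset$, which makes all edge vectors at $\mathbf 0$ nonnegative, so the ones actually used are automatically supported in $B$. Your restriction to a face does the same job. It is not even needed: the sign pattern you already noted gives it directly. In any coordinate outside $B\symdiff B'$, all edge directions at $\bfe_B$ have the same weak sign, so a nonnegative combination that vanishes there forces every direction used to vanish there.

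The ``only if'' direction, however, is unfinished. You set up the right data, but you call the closing inequality ``the main obstacle'' and fall back on an induction that does not make sense as stated. A generic functional is maximized at a single vertex. You also cannot assume a priori that faces of delta-matroid polytopes are delta-matroid polytopes. The missing step is short, and it is exactly the max/min idea you gestured at.

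Let $D=B\symdiff B'$ with $|D|\ge 3$, and let $w$ be as you defined. Toggling $S\subseteq D$ from $\bfe_B$ changes $f$ by $w(S)$, and toggling $S$ from $\bfe_{B'}$ changes it by $-w(S)$. Let $M=w_e=\max w$ and $\mu=w_m=\min w$.
\begin{itemize}
\item The axiom for $(B,B')$ at $e$ gives $f_1\in D$ with $B\symdiff\{e,f_1\}\in\cB$. Since $|\{e,f_1\}|\le 2<|D|$, this basis is neither $B$ nor $B'$, so $w(\{e,f_1\})<0$. Hence $M+\mu<0$: if $f_1\ne e$, use $w_{f_1}\ge\mu$; if $f_1=e$, then $M<0$.
\item The axiom for $(B',B)$ at $m$ gives $g\in D$ with $B'\symdiff\{m,g\}\in\cB$, again neither $B$ nor $B'$, so $w(\{m,g\})>0$. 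Hence $M+\mu>0$ by the symmetric reasoning.
\end{itemize}
This contradiction completes the direction.

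With this completion, your route is genuinely different from the paper's, and in fact dual to it. The paper twists $A$ to $\emptyset$ and builds a bipartite digraph on two copies of $B$ in which every vertex has outdegree at least one. It extracts a directed cycle and reads off an explicit convex combination of other vertices that lands in the open segment. That is a primal certificate that the segment is not an edge. You instead assume a supporting functional and contradict it with just two applications of the axiom, one at the maximum of $w$ and one at its minimum, with no cycle needed. Summing $w$ around the paper's cycle yields the same contradiction, which is the LP-duality relation between the two proofs. Your version is shorter; the paper's has the advantage of exhibiting the convex combination explicitly.
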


The following proof is a direct extension of that of Theorem~4.1 for matroids in~\cite{GGMS1987}. 

\begin{proof}

Let $\cB := \{B \subseteq [n] \colon \bfe_B \text{ is a vertex of } \pP\}$. Then $\pP = \pP_\cB$.

Suppose that every edge of $\pP$ satisfies the condition stated in the theorem. We need to prove that for $A,B\in \cB$ and $i \in A\triangle B$, there is an element $j\in A\triangle B$ (possibly, $j=i$) such that $A\triangle \{i,j\} \in \cB$.

By replacing with $\cB$ with $\cB\triangle A = \{B'\triangle A\colon A\in \cB\}$, we may assume that $A=\emptyset$, as $\pP_{\cB\triangle A}$ can be obtained from $\pP_\cB$ by reflecting along the hyperplanes $x_k = \frac{1}{2}$ for $k\in A$, which has the same edge property. We need to prove that for $i\in B$, there is an element $j\in B$ such that $\{i,j\} \in \cB$.

    Since $\pP$ is convex, the line segment between $\bfe_A = \mathbf{0}$ and $\bfe_B$ is contained in $\pP$. Hence, it lies in a cone spanned by the edges of $\pP$ emanating from $\mathbf{0}$. We denote these edge vectors by $\bfv_1, \bfv_2, \ldots, \bfv_m$ and write 
    \[
        \bfe_B = \lambda_1 \bfv_1 + \lambda_2 \bfv_2 + \cdots + \lambda_m \bfv_m,
    \]
    where the $\lambda_k$'s are nonnegative reals. Since $\pP$ is a 0/1-polytope,  every $\bfv_k$ equals $\bfe_s$ for some $s\in [n]$ or $\bfe_s+\bfe_t$ for two distinct elements $s,t\in [n]$. For the element $i\in B$, since $\bfe_B(i) = 1 > 0$, there exists a vector $\bfv_k$ equal to either $\bfe_i$ or $\bfe_i + \bfe_j$, which implies that either $\{i\}\in\cB$ or $\{i,j\}\in\cB$.

    Conversely, suppose $\cB$ is the collection of bases of a delta-matroid. Any edge of $\pP$ is of the form $\{(1-\lambda)\bfe_A + \lambda \bfe_B\colon0\leq\lambda\leq 1\}$ for distinct $A,B\in \cB$. When $|A\triangle B|<3$, the edge has the desired property. When $|A\triangle B| \ge 3$, we need to show that this cannot be an edge. It is enough to show that there exists $0<\lambda<1$ such that $(1-\lambda)\bfe_A + \lambda \bfe_B$ is a convex combination of the vertices of $\pP$ other than $\bfe_A$ and $\bfe_B$.

    By replacing with $\cB$ with $\cB\triangle A = \{B'\triangle A : A\in \cB\}$, we may assume that $A=\emptyset$. 
    By relabeling, we may assume that $B = \{1,2,\ldots,k\}$ for some $3\le k\le n$.

    We construct an auxiliary directed graph $G$ on $\{1,2,\ldots,k\} \sqcup \{1',2',\ldots,k'\}$, where the edge set is
    \[
        \{\vec{ij'} : \{i,j\} \in \cB\} \sqcup \{\vec{i'j} : B\triangle \{i,j\} \in \cB\}.
    \]
    By the symmetric exchange axiom, every vertex of $G$ has outdegree at least $1$. Hence, $G$ has a directed cycle, and we enumerate its vertices accordingly as $i_0, i_1', i_2, i_3' \dots, i_{2m-1}', i_{2m}$ with $i_{2m} = i_0 \in \{1,2,\ldots,k\}$.
    For $j=0,1,\dots,2m-1$, let
    \[
    \bfv_j := \begin{cases}
        \bfe_{i_j} + \bfe_{i_{j+1}} & \text{if } i_j \ne i_{j+1} \\
        \bfe_{i_j} & \text{if } i_j = i_{j+1} \\
    \end{cases}
    \quad\text{and}\quad
    \sigma_j := \begin{cases}
        1 & \text{if } i_j \ne i_{j+1} \\
        2 & \text{if } i_j = i_{j+1}. \\
    \end{cases}\]

We have 
    \[
         \sum_{j:\text{ even}} \sigma_j \cdot \bfv_{j}    =\sum_{0\leq j\leq 2m-1}  \bfe_{i_j}    = 
\sum_{j:\text{ odd}} \sigma_j \cdot \bfv_{j},     \]
which implies
\[
        \left( \sum_{j:\text{ odd}} \sigma_j \right) \bfe_B 
        = 
        \left( \sum_{j:\text{ even}} \sigma_j \cdot \bfv_{j} \right) 
        +
        \left( \sum_{j:\text{ odd}} \sigma_j \cdot (\bfe_{B} - \bfv_{j} ) \right).
    \]

Because the support of $\bfv_j$ is a basis in $\cB\setminus\{A,B\}$ for each even $j$, and $\bfe_B - \bfv_j$ is a basis in $\cB\setminus\{A,B\}$ for each odd $j$, we obtain the desired convex combination by taking $\lambda = (\sum_{j:\text{ odd}} \sigma_j) / (\sum_{j=0}^{2m-1} \sigma_j)$.
\end{proof}

Lastly, we use the Gelfand--Serganova Theorem for delta-matroids to derive the one for even delta-matroids and the Gelfand--Goresky--MacPherson--Serganova Theorem for matroids.

\begin{proof}[Proof of Theorem~\ref{thm:GGMS for delta-matroids}(ii)]
    Recall that a delta-matroid is even if and only if all its bases have the same parity. In the Gelfand--Serganova Theorem for delta-matroids, vectors of the form $\bfe_i$ do not exist if and only if all the bases have the same parity.  
\end{proof}

\begin{proof}[Proof of Theorem~\ref{thm:GGMS}]
    Recall that a delta-matroid is a matroid if and only if all its bases have the same cardinality. In the Gelfand--Serganova Theorem for delta-matroids, vectors of the form $\bfe_i$ or $\bfe_i+\bfe_j$ do not exist if and only if all the bases have the same cardinality.  
\end{proof}
\subsection{A homology theory}\label{sec: homology}
In \cite[Theorem~1.12]{Wenzel1996}, Wenzel reformulated
Maurer's homotopy theorem (Theorem~\ref{thm: Maurer homotopy theorem}) in terms of homology. We slightly change his wording to avoid defining unnecessary terminology. 
\begin{thm}[{\cite[Theorem~1.12]{Wenzel1996}}]\label{thm: Wenzel homology theorem}
The first homology group\footnote{It makes sense to speak of the homology of a graph by viewing the graph as a CW-complex; see Def.~\ref{def: homology}.} of the basis graph of an even delta-matroid is generated by cycles of length $3$ and $4$.  
\end{thm}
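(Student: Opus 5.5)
We deduce the statement from the homotopy theorem already proved, by passing from $\pi_1$ to $H_1$ via abelianization. Let $G=G_M$ be the basis graph of an even delta-matroid $M$, viewed as the CW complex $X_G$. Since $M$ is even, no two bases differ in exactly one element. So two bases are adjacent in $G$ exactly when they differ in two elements, and $G$ is the basis graph in Wenzel's sense. It is connected because $\cB$ is the vertex set of the connected graph of $\pP_M$, which is a subgraph of $G$ by Lemma~\ref{lem: 1-skeleton delta}.

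By Theorem~\ref{thm: 2 homotopy theorems for delta-matroids}(ii) and Proposition~\ref{prop: topology Maurer}, the group $\pi_1(X^{34}_G)$ is trivial. Equivalently, by Corollary~\ref{cor: group Maurer} and Lemma~\ref{lem: two pie1}, $\pi_1(G,v)=N_v^{34}$. Thus every class in $\pi_1(G,v)$ is a product of conjugates $[WCW^{-1}]^{\pm1}$, where $C$ ranges over $3$- and $4$-cycles.

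Next apply the Hurewicz map $h\colon\pi_1(X_G,v)\to H_1(X_G)$. Since $X_G$ is connected, $h$ is surjective; in fact it is the abelianization, $H_1(X_G)\cong\pi_1(X_G,v)^{\mathrm{ab}}$. The map $h$ is a homomorphism, and it sends a conjugate $[WCW^{-1}]$ to the homology class of the cellular $1$-cycle determined by $C$, because the contributions of $W$ and $W^{-1}$ cancel. Hence every element of $H_1(X_G)$ is an integer combination of the classes of $3$- and $4$-cycles. For a $1$-dimensional complex, $H_1$ equals the group $Z_1$ of cellular $1$-cycles, since there are no $2$-cells. Therefore the cycle space $Z_1(G;\bZ)$ is generated by the $3$- and $4$-cycles, which is the statement.

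An alternative route avoids $\pi_1$ altogether. Since $\pi_1(X^{34}_G)=1$, we have $H_1(X^{34}_G)=0$. The cellular chain complex then gives $Z_1(G)=\partial C_2(X^{34}_G)$, and the image $\partial C_2$ is spanned by the boundaries of the attached $3$- and $4$-cycles.

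No step here is a real obstacle. The main point needing care is to match Wenzel's notion of ``cycle'' and ``generated'' with the cellular $1$-cycles of Definition~\ref{def: homology}, including orientations and base-point conventions, so that the image of $[WCW^{-1}]$ is literally the cycle $C$.
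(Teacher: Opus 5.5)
Your proposal is correct and follows the paper's route: the paper deduces this statement from the delta-matroid homotopy theorem (Theorem~\ref{thm: 2 homotopy theorems for delta-matroids}(ii), applied to even delta-matroids) together with Proposition~\ref{prop: homotopy implies homology}, which checks directly that each elementary transformation changes the homology class of a closed walk by a $3$- or $4$-cycle. Your Hurewicz/abelianization step, and equally your alternative via $H_1(X^{34}_G)=0$ (which is the paper's lemma relating $H_1(G)$ to $X^{34}_G$), is only a topological repackaging of that same deduction.
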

However, we argue that it is not obvious that the homology statement is equivalent to the homotopy statement. It is true that the latter implies the former. 

\begin{prop}\label{prop: homotopy implies homology}
Let $G$ be a connected simple graph. If any two walks with the same end-points are
combinatorially homotopic in $G$, then the first homology group of $G$ is generated by cycles of length $3$ and $4$. 
\end{prop}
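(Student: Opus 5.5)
By Proposition~\ref{prop: topology Maurer}, the hypothesis is equivalent to $\pi_1(X^{34}_G)$ being trivial. The plan is to pass from this vanishing fundamental group to a homology statement via the Hurewicz theorem. We then compare the homology of $X^{34}_G$ with that of $G$ using the cellular chain complex.

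First, since $X^{34}_G$ is path-connected, the Hurewicz theorem gives $H_1(X^{34}_G)\cong \pi_1(X^{34}_G)^{\mathrm{ab}}$, so $H_1(X^{34}_G)=0$. Next, compute both groups cellularly. The $1$-skeleton of $X^{34}_G$ is $X_G$, so the cellular chain groups $C_0$ and $C_1$ of $X^{34}_G$ agree with those of $G$. The group $C_2$ is free on the attached $2$-cells, one for each $3$- or $4$-cycle. The cellular boundary $\partial_2$ sends each $2$-cell to the $1$-chain of its attaching cycle, i.e.\ the signed sum of the edges of that $3$- or $4$-cycle. Since $G$ has no $2$-cells, $H_1(G)=\ker\partial_1$. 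On the other hand, $H_1(X^{34}_G)=\ker\partial_1/\operatorname{im}\partial_2$.

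Vanishing of $H_1(X^{34}_G)$ therefore says $\ker\partial_1=\operatorname{im}\partial_2$. Equivalently, $H_1(G)$ is generated by the chains of $3$-cycles and $4$-cycles, which is the statement.

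The only step needing care is identifying $\partial_2$ of a $2$-cell with the cycle's edge chain. The attaching map traverses the cycle once, so the cellular boundary formula's degree computation gives coefficient $\pm1$ on each edge. This is where simplicity of the graph matters: the cycle has distinct edges, so no cancellation occurs. Alternatively, one can avoid Hurewicz with a purely combinatorial argument. Every element of $\ker\partial_1$ is the class of a closed walk, obtained by abelianizing $\pi_1(G,v)\to H_1(G)$. Each elementary transformation changes the homology class by the class of a $3$- or $4$-cycle (or by zero for a deletion). Null-homotopy of every closed walk then expresses each class as a sum of such cycles. I expect no real obstacle, only bookkeeping of orientations.
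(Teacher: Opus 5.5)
Your proof is correct. Your main route differs from the paper's, while your closing ``alternative'' is essentially the paper's proof. The paper argues directly and combinatorially. It views a cycle $C$ as a closed walk and contracts it to a vertex by elementary transformations. It then observes that each transformation changes the $1$-chain by a $3$- or $4$-cycle, or by a back-and-forth $2$-cycle, which is zero. Your primary route instead invokes Proposition~\ref{prop: topology Maurer} to get $\pi_1(X^{34}_G)=1$ and applies Hurewicz. You then use the cellular computation $H_1(X^{34}_G)=\ker\partial_1/\im\partial_2$, which is exactly the lemma the paper proves slightly later in Section~\ref{sec: homology}.

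Given that machinery, your route is shorter. It also makes clear why the homology statement is only the abelianization of the homotopy statement. This is precisely why the converse can fail when $\pi_1(X^{34}_G)$ is a nontrivial perfect group such as $\alt(5)$, as in Proposition~\ref{prop: counterexample}. The paper's route is elementary and self-contained: it needs neither Hurewicz nor cellular homology, nor the identification in Lemma~\ref{lem: two pie1}, which relied on results from Spanier and Hatcher.
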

\begin{proof}
Let $C$ be a cycle, and we need to prove that it can be written as a sum of cycles of length $3$ and $4$ in the homology group. By the homotopy property, the cycle $C$ (viewed as a closed walk) can be turned into a vertex by elementary transformations. Notice that if two cycles $C_1$ and $C_2$ differ by an elementary transformation, then they differ by a $3$-cycle or a $4$-cycle in the homology group ($2$-cycles are zero in the homology group). Hence we get the desired result. 
\end{proof}

We will exhibit a connected simple graph for which the homology statement holds but the homotopy statement does not. In this sense, the two statements are not obviously equivalent, although they are both true when the graph is the basis graph of a matroid or a delta-matroid.  

Before showing the example, we give some comments on the literature concerning the homology statement. Although Wenzel used a homology statement, his proof was for the homotopy statement. We can still say that Wenzel generalized Maurer’s result in this sense. 
As we mentioned in Section~\ref{sec: intro}, an important application of Theorem~\ref{thm: Maurer homotopy theorem} is to prove the cryptomorphisms for matroids over tracts in \cite{BB}\footnote{ For the part of proof that requires Maurer's homotopy theorem, the authors of~\cite{BB} refer to a similar proof in the paper \cite{AD2012} by Anderson and Delucchi, where Maurer's homotopy theorem is used.}. But in these proofs, we only need the homology property. One can see this clearly in \cite{JK} where Jin and the second author use Wenzel's homology statement to prove the cryptomorphisms for orthogonal matroids (equivalent to even delta-matroids) over tracts. In \cite[Theorem~5.3]{Kim}, the second author shows a homology property of the transversal basis graph of an antisymmetric matroid and uses it to prove the cryptomorphisms. This homology property of antisymmetric matroids only generalizes Wenzel's homology statement for even delta-matroids. 
Our homotopy theorem for antisymmetric matroids (Theorem~\ref{thm: Maurer homotopy theorem for antisymmetric matroids}) together with Proposition~\ref{prop: homotopy implies homology} implies \cite[Theorem~5.3]{Kim} directly.

The remaining part of this subsection is to prove Proposition~\ref{prop: counterexample}.

We briefly recall the cellular homology. For the undefined terms, we refer the readers to Hatcher's book \cite{Hatcher} or Kozlov's book \cite{Kozlov}. A CW complex is said to be \emph{regular} if the attaching maps are all homeomorphisms. A polytope is naturally a regular CW complex, where faces correspond to closed cells. Hence a polytope of dimension $d$ admits a cellular chain complex
\[0\xrightarrow{} A_d\xrightarrow{\partial_d} \cdots \to A_1\xrightarrow{\partial_1} A_0\to 0,\] 
where $A_i$ is the free abelian group with basis the $i$-faces of $X$. The chain complex has the following properties, where the latter two are due to regularity.  
\begin{itemize}
    \item $\partial_{i-1}\circ\partial_i=0$ for each $i$.
    \item For any $i$-face $f$ with $i\geq 2$, we have\[\partial_if=\sum_\text{$e$ is an $(i-1)$-face of $f$}k_ee,\]where each $k_e$ is $1$ or $-1$, depending on the orientation of $e$. 
    \item For any edge $e$, we have\[\partial_1e=\pm(v-v'),\]where $v$ and $v'$ are the two vertices incident to $e$.   
\end{itemize}

Recall that a graph $G$ can also be viewed as a CW complex $X_G$. When the graph is loopless, the CW complex is regular. 

\begin{defn}\label{def: homology}
Let $G$ be a connected simple graph. The homology group $H_1(G)$ of the graph is defined to be the first homology group (with coefficients in $\mathbb{Z}$) of the CW complex $X_G$. 
\end{defn}
The cellular chain complex of $X_G$ is
\begin{equation}\label{eq: chain complex X_G}
0\to A_1\xrightarrow{\partial_1} A_0\to 0,    
\end{equation}
where $A_1$ is the free abelian group with basis the edges of $G$, $A_0$ is the free abelian group with basis the vertices of $G$. Hence $H_1(G)=\ker(\partial_1)$. A cycle of $G$ clearly gives rise to an element (up to signs) in $H_1(G)$, and such elements generate $H_1(G)$. By abusing terminology, we say that $H_1(G)$ is generated by the cycles of $G$.

\begin{lem}
Let $G$ be a graph. Then $H_1(G)$ is generated by cycles of length $3$ and $4$ if and only if $H_1(X^{34}_G)$ is trivial (cf. Def.~\ref{def: X_G}).  
\end{lem}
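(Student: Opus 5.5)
The plan is to compute $H_1(X^{34}_G)$ directly from its cellular chain complex and compare it with $H_1(G)$.

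First I will make sure $X^{34}_G$ is a genuine CW complex whose chain complex is easy to write down. It is obtained from $X_G$ by attaching one $2$-cell for each $3$-cycle and each $4$-cycle of $G$, with cycles having the same edge set identified as in Definition~\ref{def: X_G}. Since $G$ is simple, each such cycle bounds an embedded closed curve, so the attaching maps are homeomorphisms onto their images. The cellular chain complex is therefore
\[0\to A_2\xrightarrow{\partial_2} A_1\xrightarrow{\partial_1} A_0\to 0.\]
Here $A_1$ and $A_0$ are as in \eqref{eq: chain complex X_G}, and $A_2$ is free on the $3$- and $4$-cycles. For each such cycle $C$, the element $\partial_2 C$ is the signed sum of its edges, which is exactly the element of $\ker\partial_1=H_1(G)$ that $C$ determines up to sign. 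This is the standard cellular boundary formula: the degree of the attaching map onto each edge it traverses once is $\pm1$.

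Next I compare the two homology groups. The $1$-skeleton of $X^{34}_G$ is $X_G$, so
\[H_1(X^{34}_G)=\ker\partial_1/\operatorname{im}\partial_2 = H_1(G)/\langle \text{classes of $3$- and $4$-cycles}\rangle.\]
This quotient is trivial exactly when the classes of the $3$- and $4$-cycles generate $H_1(G)$, which proves both directions at once.

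The main point needing care is the identification of $\partial_2 C$ with the cycle class of $C$, including the orientation and sign conventions. Signs do not affect the subgroup generated, so they can be ignored. Distinct base points or directions of the same cycle give the same cell up to sign, so the identification in Definition~\ref{def: X_G} changes nothing either.

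Two alternatives would also work. One is the long exact sequence of the pair $(X^{34}_G, X_G)$, using $H_2(X_G)=0$ and $H_1(X^{34}_G,X_G)=0$. The other is Hurewicz applied to Lemma~\ref{lem: two pie1}, which gives $H_1(X^{34}_G)$ as the abelianization of $\pi_1(G)/N^{34}_v$. The direct cellular computation is simplest, so I will use it.
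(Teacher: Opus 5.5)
Your proposal is correct and is essentially the paper's proof. Like the paper, you write down the cellular chain complex of the regular CW complex $X^{34}_G$, observe that $\operatorname{im}\partial_2$ is generated by the classes of the $3$- and $4$-cycles, and conclude that $H_1(X^{34}_G)=\ker\partial_1/\operatorname{im}\partial_2$ vanishes exactly when these classes generate $\ker\partial_1=H_1(G)$; your extra checks on the embeddedness of the attaching loops, the signs, and the identification in Definition~\ref{def: X_G} only make explicit what the paper takes for granted.
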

\begin{proof}
The space $X^{34}_G$ is a regular CW complex, and its cellular chain complex is
\[0\to A_2\xrightarrow{\partial_2} A_1\xrightarrow{\partial_1} A_0\to 0,\]where $A_0$ and $A_1$ are as in \eqref{eq: chain complex X_G}, and $A_2$ is the free abelian group with basis the $2$-cells of $X^{34}_G$. By the definition of $X^{34}_G$, the image of $\partial_2$ is generated by $3$- and $4$-cycles of $G$. Thus the homology group $H_1(X^{34}_G)=\ker\partial_1/\im \partial_2$ is trivial if and only if $H_1(G)=\ker(\partial_1)$ is generated by $3$- and $4$-cycles of $G$.
\end{proof}

We recall the classical technique in algebraic topology to construct a \(2\)-dimensional CW complex $X$ with $\pi_1(X)\cong\alt(5)$, where \[\alt(5)\cong
\left\langle a,b \mid a^{2}=1,\ b^{3}=1,\ (ab)^{5}=1\right\rangle\] is the alternating group of degree five. 
We begin with the wedge
\(X^{(1)}=S^{1}_{a}\vee S^{1}_{b}\), whose two oriented circles represent
the generators \(a\) and \(b\), and attach three \(2\)-cells along loops
representing the words \(a^{2}\), \(b^{3}\), and \((ab)^{5}\), respectively.
Thus by Van Kampen's theorem,
\[
\pi_{1}(X)
\cong \operatorname{Alt}(5).
\]
Since the homology group is isomorphic to the abelianization of the fundamental group, we obtain that $H_1(X)$ is trivial. 

Since $X$ is not a regular CW complex, there is no graph $G$ with $X=X^{34}_G$. Hence, we need to modify the space $X$. Every CW complex $X$ is homotopy equivalent to a simplicial complex, which can be chosen to be of the same dimension as $X$ and finite if $X$ is finite \cite[Theorem~2C.5]{Hatcher}. Let $Y$ be a $2$-dimensional finite simplicial complex homotopy equivalent to the space $X$ that we constructed. For each $2$-cell of $Y$, which must be a triangle, we subdivide it into three quadrilaterals; see Figure~\ref{fig: subdivide}. By doing so, we obtain a new CW complex $Z$. Then the $1$-skeleton of $Z$ is a graph $G$ with $X^{34}_G=Z$. Since $Z$ is still homotopy equivalent to the space $X$, we obtain the following result.

\begin{figure}[!ht]
\centering
\resizebox{0.5\textwidth}{!}{\begin{circuitikz}
\tikzstyle{every node}=[font=\fontsize{18.2pt}{23.7pt}\selectfont]
\draw [short] (13.75,20.375) -- (11.25,16.625);
\draw [short] (13.75,20.375) -- (16.25,16.625);
\draw [short] (11.25,16.625) -- (16.25,16.625);
\draw [short] (22.5,20.375) -- (20,16.625);
\draw [short] (20,16.625) -- (25,16.625);
\draw [short] (22.5,20.375) -- (25,16.625);
\draw [short] (22.5,17.875) -- (22.5,16.625);
\draw [short] (22.5,17.875) -- (23.650,18.625);
\draw [short] (22.5,17.875) -- (21.350,18.625);
\draw [-{Stealth[scale=1.5]}] (17,18.375) -- (19.625,18.375);
\end{circuitikz}
}\caption{A subdivision of a triangle into three quadrilaterals.}
\label{fig: subdivide}
\end{figure}

\begin{prop}\label{prop: counterexample}
There exists a connected simple graph $G$ such that $H_1(G)$ is generated by cycles of length $3$ and $4$ but the graph admits a closed walk that is not combinatorially null-homotopic. 
\end{prop}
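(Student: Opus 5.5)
The plan is to take the graph $G$ to be the $1$-skeleton of the complex $Z$ constructed just above, and to verify the two required properties through the topological dictionary already set up: Proposition~\ref{prop: topology Maurer} for homotopy and the lemma relating $H_1(G)$ to $H_1(X^{34}_G)$ for homology. The central claim is that $X^{34}_G = Z$ and that $Z \simeq X$, where $\pi_1(X)\cong\alt(5)$ and $H_1(X)=0$.

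First I would make the construction of $Z$ precise. Start from a finite $2$-dimensional simplicial complex $Y \simeq X$ (by \cite[Theorem~2C.5]{Hatcher}). To avoid pathologies, I would pass to a barycentric subdivision, or two, before subdividing each triangle into three quadrilaterals. Each triangle with vertices $a,b,c$ receives a new centre $o$ and new edge midpoints $m_{ab},m_{bc},m_{ca}$, producing the quadrilaterals $a\,m_{ab}\,o\,m_{ca}$ and so on. The midpoints are shared between adjacent triangles, so the result is a regular CW structure $Z$ on the same underlying space as $Y$. In particular $Z \simeq Y \simeq X$, and the graph $G = Z^{(1)}$ is simple and connected.

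The main obstacle is to show that $X^{34}_G = Z$ exactly: the $2$-cells of $Z$ must be precisely the $3$- and $4$-cycles of $G$, with no extra short cycles. I would argue as follows.
\begin{enumerate}
\item $G$ is bipartite. The original vertices of $Y$ and the centres form one class, and the midpoints form the other, since every edge joins a midpoint to a vertex or a centre. Hence $G$ has no $3$-cycles.
\item Classify the $4$-cycles. A $4$-cycle alternates between two midpoints and two elements of the other class, and I would split into cases according to that pair.
\begin{itemize}
\item A centre $o$ and a vertex $a$: both midpoints must lie in the triangle of $o$ and be adjacent to $a$, so the cycle is a face of $Z$.
\item Two centres: they would need two common midpoints, meaning two triangles sharing two edges, which is impossible in a simplicial complex.
\item Two vertices $a,b$: a midpoint adjacent to both $a$ and $b$ is $m_{ab}$, which is unique, so no such $4$-cycle exists.
\end{itemize}
\end{enumerate}
The remaining mixed cases (for example, a vertex together with a centre of a triangle not containing it) would be excluded the same way, using that a midpoint has exactly two vertex neighbours and that centres are adjacent only to the midpoints of their own triangle. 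I expect the bookkeeping here to be the delicate point, and it is the reason I would keep the option of an extra barycentric subdivision of $Y$.

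Given $X^{34}_G = Z$, the rest follows. We have $H_1(X^{34}_G) \cong H_1(X) = 0$, since $H_1$ is the abelianization of $\alt(5)$, which is perfect. By the preceding lemma, $H_1(G)$ is therefore generated by $3$- and $4$-cycles. On the other hand, $\pi_1(X^{34}_G)\cong\alt(5)\neq 1$, so by Proposition~\ref{prop: topology Maurer} (the implication (iii)$\Rightarrow$(iv), read contrapositively) some closed walk in $G$ is not combinatorially null-homotopic.
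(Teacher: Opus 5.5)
Your proposal is correct and follows the paper's own route: take $G$ to be the $1$-skeleton of $Z$, use $X^{34}_G=Z\simeq X$, and conclude $H_1(X^{34}_G)=0$ while $\pi_1(X^{34}_G)\cong\alt(5)\neq 1$. Your bipartite/common-neighbour case analysis also proves $X^{34}_G=Z$, which the paper only asserts; it works for any simplicial $Y$ with no barycentric subdivision, provided every edge of $Y$ lying in no triangle also gets a midpoint (or $Y$ is chosen pure), since otherwise such an edge joins two original vertices and your bipartiteness claim fails.
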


\subsection{A concluding remark}\label{sec: high dim}
As mentioned earlier, we only need the homology property of the basis graphs of matroids, orthogonal matroids, and antisymmetric matroids to prove the cryptomorphisms. In the matroid case, the homology property is that any cycle of the basis graph can be generated by cycles of lengths 3 and 4. Moreover, from our proof, it is easy to see that any cycle of the basis graph can be generated by the boundaries of $2$-faces (which are triangles and squares) in the basis polytope. This property can be naturally generalized to higher dimensions as follows. Recall that the basis polytope of dimension $d$ admits a cellular chain complex
\[0\xrightarrow{} A_d\xrightarrow{\partial_d} \cdots \to A_1\xrightarrow{\partial_1} A_0\to 0.\] 
Then the general homology property is that the $k$-cycles (elements in $\ker\partial_k$) are generated by the 
$k$-boundaries (elements in $\im\partial_{k+1}$) because all homology groups of a polytope vanish. For a fixed $k$, the $(k+1)$-faces of matroid polytopes, which give the generators of the $k$-boundaries, fall into finitely many combinatorial types. In this paper, we deal with the case $k=1$, where the $2$-faces are triangles and squares. By computing these types, one can obtain the homology properties of higher dimensions. However, we have not found any applications of the higher-dimensional cases.

\section*{Acknowledgments}
We thank Matthew Baker, Andreas Holmsen, Seunghun Lee, and Fr\'{e}d\'{e}ric Meunier for their valuable comments.
Both authors are partially supported by the AMS-Simons Travel Grant.

\section*{Statement on AI usage}
ChatGPT came up with the idea of the counterexample in Section~\ref{sec: homology}. We used AI to find some references, and we also used AI to check grammar when preparing this manuscript.

\printbibliography

@misc{KP2026,
	author = {Kim, Donggyu and Pomeranz, Ari},
	title = {Circuit functions of polymatroids with coefficients},
	note = {In preparation},}

@misc{BHKL2025b,
	archiveprefix = {arXiv},
	author = {Matthew Baker and June Huh and Mario Kummer and Oliver Lorscheid},
	eprint = {2508.02907},
	primaryclass = {math.CO},
	title = {Lorentzian polynomials and matroids over triangular hyperfields 1: Topological aspects},
	url = {https://arxiv.org/abs/2508.02907},
	year = {2025},}

@misc{BHKKL2025,
	archiveprefix = {arXiv},
	author = {Matthew Baker and June Huh and Donggyu Kim and Mario Kummer and Oliver Lorscheid},
	eprint = {2507.14718},
	primaryclass = {math.CO},
	title = {Representation theory for polymatroids},
	url = {https://arxiv.org/abs/2507.14718},
	year = {2025},}

@article{AD2012,
	author = {Anderson, Laura and Delucchi, Emanuele},
	doi = {10.1007/s00454-012-9458-9},
	fjournal = {Discrete \& Computational Geometry. An International Journal of Mathematics and Computer Science},
	issn = {0179-5376},
	journal = {Discrete Comput. Geom.},
	mrclass = {52B40 (05B35 52C40 94C99)},
	mrnumber = {3000567},
	mrreviewer = {Winfried Hochst\"{a}ttler},
	number = {4},
	pages = {807--846},
	title = {Foundations for a theory of complex matroids},
	url = {https://doi.org/10.1007/s00454-012-9458-9},
	volume = {48},
	year = {2012},}

@article{CCO2015,
	author = {Chalopin, J\'{e}r\'{e}mie and Chepoi, Victor and Osajda, Damian},
	doi = {10.1016/j.jctb.2015.03.004},
	fjournal = {Journal of Combinatorial Theory. Series B},
	issn = {0095-8956},
	journal = {J. Combin. Theory Ser. B},
	mrclass = {05B35},
	mrnumber = {3354288},
	mrreviewer = {Simon Hampe},
	pages = {1--32},
	title = {On two conjectures of {M}aurer concerning basis graphs of matroids},
	url = {https://doi.org/10.1016/j.jctb.2015.03.004},
	volume = {114},
	year = {2015},}

@article{Wenzel1996,
	author = {Wenzel, Walter},
	doi = {10.1006/aama.1996.0002},
	fjournal = {Advances in Applied Mathematics},
	issn = {0196-8858},
	journal = {Adv. in Appl. Math.},
	mrclass = {05B35 (51D20)},
	mrnumber = {1376641},
	mrreviewer = {Andr\'{e} Bouchet},
	number = {1},
	pages = {27--62},
	title = {Maurer's homotopy theory and geometric algebra for even {$\Delta$}-matroids},
	url = {https://doi.org/10.1006/aama.1996.0002},
	volume = {17},
	year = {1996},}

@article{BL2005,
	author = {Barcelo, H\'{e}l{\`e}ne and Laubenbacher, Reinhard},
	doi = {10.1016/j.disc.2004.03.016},
	fjournal = {Discrete Mathematics},
	issn = {0012-365X},
	journal = {Discrete Math.},
	mrclass = {52B40 (05B35 05E25 37F20 52C35 55R80 57Q05)},
	mrnumber = {2163440},
	mrreviewer = {Jean-Louis Cathelineau},
	number = {1-3},
	pages = {39--61},
	title = {Perspectives on {$A$}-homotopy theory and its applications},
	url = {https://doi.org/10.1016/j.disc.2004.03.016},
	volume = {298},
	year = {2005},}

@article{BKL2001,
	author = {Barcelo, H\'{e}l{\`e}ne and Kramer, Xenia and Laubenbacher, Reinhard and Weaver, Christopher},
	doi = {10.1006/aama.2000.0710},
	fjournal = {Advances in Applied Mathematics},
	issn = {0196-8858,1090-2074},
	journal = {Adv. in Appl. Math.},
	mrclass = {57Q05 (05B35 05C10 55P99 55Q05)},
	mrnumber = {1808443},
	mrreviewer = {Andrew\ Vince},
	number = {2},
	pages = {97--128},
	title = {Foundations of a connectivity theory for simplicial complexes},
	url = {https://doi.org/10.1006/aama.2000.0710},
	volume = {26},
	year = {2001},}

@misc{Sanchez2023,
	archiveprefix = {arXiv},
	author = {Mario Sanchez},
	eprint = {2311.04203},
	primaryclass = {math.AG},
	title = {Derived Categories of Permutahedral and Stellahedral Varieties},
	url = {https://arxiv.org/abs/2311.04203},
	year = {2023}}

@article{FM2022,
	author = {Frank, Andr\'{a}s and Murota, Kazuo},
	doi = {10.1007/s10107-021-01722-2},
	fjournal = {Mathematical Programming},
	issn = {0025-5610,1436-4646},
	journal = {Math. Program.},
	mrclass = {90C27 (05C20)},
	mrnumber = {4499076},
	number = {1-2, Ser. A},
	pages = {977--1025},
	title = {Decreasing minimization on {M}-convex sets: background and structures},
	url = {https://doi.org/10.1007/s10107-021-01722-2},
	volume = {195},
	year = {2022},}

@book{Murota2003,
	author = {Murota, Kazuo},
	doi = {10.1137/1.9780898718508},
	isbn = {0-89871-540-7},
	mrclass = {90-02 (52-02 90C27 90C46 91B02)},
	mrnumber = {1997998},
	mrreviewer = {Ulrich Faigle},
	pages = {xxii+389},
	publisher = {Society for Industrial and Applied Mathematics (SIAM), Philadelphia, PA},
	series = {SIAM Monographs on Discrete Mathematics and Applications},
	title = {Discrete convex analysis},
	url = {https://doi.org/10.1137/1.9780898718508},
	year = {2003},}

@article{GS1987,
	author = {Gelfand, Israel M. and Serganova, Vera V.},
	fjournal = {Akademiya Nauk SSSR i Moskovskoe Matematicheskoe Obshchestvo. Uspekhi Matematicheskikh Nauk},
	issn = {0042-1316},
	journal = {Uspekhi Mat. Nauk},
	mrclass = {32M10 (22E40 32C42)},
	mrnumber = {898623},
	mrreviewer = {Gerhard Pfister},
	number = {2(254)},
	pages = {107--134, 287},
	title = {Combinatorial geometries and the strata of a torus on homogeneous compact manifolds},
	url = {https://mathscinet.ams.org/mathscinet-getitem?mr=898623},
	volume = {42},
	year = {1987},}

@article{GGMS1987,
	author = {Gelfand, Israel M. and Goresky, Mark and MacPherson, Robert D. and Serganova, Vera V.},
	doi = {10.1016/0001-8708(87)90059-4},
	fjournal = {Advances in Mathematics},
	issn = {0001-8708},
	journal = {Adv. Math.},
	mrclass = {14M15 (05B35 22E45 22E70 32C38 32C45 32M10)},
	mrnumber = {877789},
	mrreviewer = {Hiroaki Terao},
	number = {3},
	pages = {301--316},
	title = {Combinatorial geometries, convex polyhedra, and {S}chubert cells},
	url = {https://doi.org/10.1016/0001-8708(87)90059-4},
	volume = {63},
	year = {1987},}

@book{Ziegler1995,
	author = {Ziegler, G\"{u}nter M.},
	doi = {10.1007/978-1-4613-8431-1},
	isbn = {0-387-94365-X},
	mrclass = {52Bxx},
	mrnumber = {1311028},
	mrreviewer = {Margaret M. Bayer},
	pages = {x+370},
	publisher = {Springer-Verlag, New York},
	series = {Graduate Texts in Mathematics},
	title = {Lectures on polytopes},
	url = {https://doi.org/10.1007/978-1-4613-8431-1},
	volume = {152},
	year = {1995},}

@article{BGW1997,
	author = {Borovik, Alexandre V. and Gelfand, Israel M. and White, Neil},
	doi = {10.1007/BF02558470},
	fjournal = {Annals of Combinatorics},
	issn = {0218-0006},
	journal = {Ann. Comb.},
	mrclass = {05B35 (05E15 52B40)},
	mrnumber = {1629677},
	number = {2},
	pages = {123--134},
	title = {Coxeter matroid polytopes},
	url = {https://doi.org/10.1007/BF02558470},
	volume = {1},
	year = {1997},}

@article{Maurer1973,
	author = {Maurer, Stephen B.},
	doi = {10.1016/0095-8956(73)90005-1},
	fjournal = {Journal of Combinatorial Theory. Series B},
	issn = {0095-8956},
	journal = {J. Combin. Theory Ser. B},
	mrclass = {05B35},
	mrnumber = {317971},
	mrreviewer = {J. A. Bondy},
	pages = {216--240},
	title = {Matroid basis graphs. {I}},
	url = {https://doi.org/10.1016/0095-8956(73)90005-1},
	volume = {14},
	year = {1973},}

@article {BB,
    AUTHOR = {Baker, Matthew and Bowler, Nathan},
     TITLE = {Matroids over partial hyperstructures},
   JOURNAL = {Adv. Math.},
  FJOURNAL = {Advances in Mathematics},
    VOLUME = {343},
      YEAR = {2019},
     PAGES = {821--863},
   MRCLASS = {52B40 (05B35 12K99 52C40)},
  MRNUMBER = {3891757},
MRREVIEWER = {Dillon\ Mayhew},
       DOI = {10.1016/j.aim.2018.12.004},
       URL = {https://doi.org/10.1016/j.aim.2018.12.004},
}

@article {Kim,
    AUTHOR = {Kim, Donggyu},
     TITLE = {Baker--{B}owler theory for {L}agrangian {G}rassmannians},
   JOURNAL = {Int. Math. Res. Not. IMRN},
  FJOURNAL = {International Mathematics Research Notices. IMRN},
      YEAR = {2025},
    NUMBER = {8},
     PAGES = {Paper No. rnaf094, 41},
   MRCLASS = {14M15 (05B35 14T15 52B40)},
  MRNUMBER = {4892772},
       DOI = {10.1093/imrn/rnaf094},
       URL = {https://doi.org/10.1093/imrn/rnaf094},
}

@article {JK,
    AUTHOR = {Jin, Tong and Kim, Donggyu},
     TITLE = {Orthogonal matroids over tracts},
   JOURNAL = {Forum Math. Sigma},
  FJOURNAL = {Forum of Mathematics. Sigma},
    VOLUME = {13},
      YEAR = {2025},
     PAGES = {Paper No. e130, 34},
   MRCLASS = {05B35 (15A63 52B40)},
  MRNUMBER = {4941601},
       DOI = {10.1017/fms.2025.10085},
       URL = {https://doi.org/10.1017/fms.2025.10085},
}

@book {Hatcher,
    AUTHOR = {Hatcher, Allen},
     TITLE = {Algebraic topology},
 PUBLISHER = {Cambridge University Press, Cambridge},
      YEAR = {2002},
     PAGES = {xii+544},
      ISBN = {0-521-79160-X; 0-521-79540-0},
   MRCLASS = {55-01 (55-00)},
  MRNUMBER = {1867354},
MRREVIEWER = {Donald\ W.\ Kahn},
}

@book {Kozlov,
    AUTHOR = {Kozlov, Dmitry},
     TITLE = {Combinatorial algebraic topology},
    SERIES = {Algorithms and Computation in Mathematics},
    VOLUME = {21},
 PUBLISHER = {Springer, Berlin},
      YEAR = {2008},
     PAGES = {xx+389},
      ISBN = {978-3-540-71961-8},
   MRCLASS = {55-02 (05C15 05C25 06A07 52B70 55U10 57-02)},
  MRNUMBER = {2361455},
MRREVIEWER = {Rade\ \v Zivaljevi\'c},
       DOI = {10.1007/978-3-540-71962-5},
       URL = {https://doi.org/10.1007/978-3-540-71962-5},
}

@book {TGT,
    AUTHOR = {Gross, Jonathan L. and Tucker, Thomas W.},
     TITLE = {Topological graph theory},
    SERIES = {Wiley-Interscience Series in Discrete Mathematics and
              Optimization},
      NOTE = {A Wiley-Interscience Publication},
 PUBLISHER = {John Wiley \& Sons, Inc., New York},
      YEAR = {1987},
     PAGES = {xvi+351},
      ISBN = {0-471-04926-3},
   MRCLASS = {05C10 (20F32 57M15)},
  MRNUMBER = {898434},
MRREVIEWER = {Saul\ Stahl},
}

@article {Lovasz,
    AUTHOR = {Lov\'asz, L\'{a}szl\'{o}},
     TITLE = {A homology theory for spanning trees of a graph},
   JOURNAL = {Acta Math. Acad. Sci. Hungar.},
  FJOURNAL = {Acta Mathematica. Academiae Scientiarum Hungaricae},
    VOLUME = {30},
      YEAR = {1977},
    NUMBER = {3-4},
     PAGES = {241--251},
      ISSN = {0001-5954,1588-2632},
   MRCLASS = {05C10 (05C05 55A15 55B10)},
  MRNUMBER = {543668},
MRREVIEWER = {Dana\ May\ Latch},
       DOI = {10.1007/BF01896190},
       URL = {https://doi.org/10.1007/BF01896190},
}

@book {Spanier,
    AUTHOR = {Spanier, Edwin H.},
     TITLE = {Algebraic topology},
 PUBLISHER = {McGraw-Hill Book Co., New York-Toronto-London},
      YEAR = {1966},
     PAGES = {xiv+528},
   MRCLASS = {55.00},
  MRNUMBER = {210112},
MRREVIEWER = {S.-T.\ Hu},
}

\appendix

\end{document}